\def\Xint#1{\mathchoice
  {\XXint\displaystyle\textstyle{#1}}%
  {\XXint\textstyle\scriptstyle{#1}}%
  {\XXint\scriptstyle\scriptscriptstyle{#1}}%
  {\XXint\scriptscriptstyle\scriptscriptstyle{#1}}%
  \!\int}
\def\XXint#1#2#3{{\setbox0=\hbox{$#1{#2#3}{\int}$}
  \vcenter{\hbox{$#2#3$}}\kern-.5\wd0}}
\def\dashint{\Xint-}
\newcommand{\al}{\alpha}       
\newcommand{\lda}{\lambda}
\newcommand{\om}{\Omega}            
\newcommand{\pa}{\partial}
\newcommand{\va}{\varepsilon}       
\newcommand{\ud}{\mathrm{d}}
\newcommand{\be}{\begin{equation}} 
\newcommand{\ee}{\end{equation}}
\newcommand{\w}{\omega}      
\newcommand{\Lda}{\Lambda}
\newcommand{\bA}{\mathbb{A}}
\newcommand{\cB}{\mathcal{B}}
\newcommand{\cC}{\mathcal{C}}
\newcommand{\rD}{\mathrm{D}}
\newcommand{\cD}{\mathcal{D}}
\newcommand{\cF}{\mathcal{F}}
\newcommand{\bG}{\mathbb{G}}
\newcommand{\cG}{\mathcal{G}}
\newcommand{\cL}{\mathcal{L}} 
\newcommand{\Z}{\mathbb{Z}}
\newcommand{\M}{\mathcal{M}}
\newcommand{\MM}{\mathbb{M}}
\newcommand{\cN}{\mathcal{N}}
\newcommand{\cP}{\mathcal{P}}
\newcommand{\R}{\mathbb{R}}
\newcommand{\cS}{\mathcal{S}} 
\newcommand{\Ss}{\mathbb{S}}
\newcommand{\cT}{\mathcal{T}}
\newcommand{\cW}{\mathcal{W}}
\newcommand{\wc}{\rightharpoonup}        
\newcommand{\HH}{\mathcal{H}}
\newcommand{\vp}{\varphi}
\newcommand{\ga}{\gamma}
\newcommand{\sg}{\sigma} 
\newcommand{\ift}{\infty} 
\newcommand{\wt}{\widetilde}
\newcommand{\f}{\frac}
\newcommand{\ol}{\overline}
\newcommand{\op}{\operatorname}
\newcommand{\Sg}{\Sigma}
\newcommand{\na}{\nabla}
\DeclareMathOperator{\dist}{dist}
\DeclareMathOperator{\supp}{supp}
\DeclareMathOperator{\sing}{sing}
\DeclareMathOperator{\reg}{reg}
\DeclareMathOperator{\loc}{loc}
\def\<{\langle}\def\>{\rangle}
\def\({\left(}\def\){\right)}
\numberwithin{equation}{section}
\theoremstyle{plain}
\newtheorem{thm}{Theorem}[section]
\newtheorem{cor}[thm]{Corollary}
\newtheorem{lem}[thm]{Lemma}
\newtheorem{prop}[thm]{Proposition}
\theoremstyle{definition}
\newtheorem{defn}[thm]{Definition}
\theoremstyle{remark}
\newtheorem{rem}[thm]{Remark}
\title[Stratification and Rectifiability of Harmonic Map Flows]{Stratification and Rectifiability of Harmonic Map Flows via Tangent Measures}
\author{Haotong Fu}
\address{School of Mathematical Sciences, Peking University, Beijing 100871, China}
\email{2301110012@pku.edu.cn}
\author{Wei Wang}
\address{School of Mathematical Sciences, Peking University, Beijing 100871, China}
\email{gjmtamag@gmail.com,\,\,2201110024@stu.pku.edu.cn}
\author{Ke Wu}
\address{School of Mathematics, Yunnan Normal University, Kunming, 650500, China}
\email{m18629096093@163.com,\,\,kewu@ynnu.edu.cn}
\author{Zhifei Zhang}
\address{School of Mathematical Sciences, Peking University, Beijing 100871, China}
\email{zfzhang@math.pku.edu.cn}
\begin{document}
\begin{abstract}
In this paper, we investigate the stratification theory for ``suitable solutions" of harmonic map flows based on the spatial symmetry of tangent measures. Building on the quantitative stratifications and Reifenberg-rectifiable theory developed by Naber and Valtorta in breakthrough research of harmonic maps \cite{NV17}, we prove that each time slice of the singular set in our model is rectifiable. By making some additional assumptions about the target manifolds to exclude specific tangent flows and measures, we can also obtain a sharp regularity of suitable solutions for harmonic map flows.
\end{abstract}
\maketitle

\section{Introduction}

\subsection{Background}
Let $ \om\subset\R^n $ $ (n\in\Z_{\geq 2}) $ be a bounded smooth domain and  $ \cN $ be a smooth compact manifold isometrically embedded in $ \R^d $ with $ d\in\Z_{\geq 2} $. The harmonic map flow from $ \om$ to $ \cN $ is
\be
\pa_tu-\Delta u=A(u)(\na u,\na u),\label{heatflow}
\ee
where $ A(y)(\cdot,\cdot):T\cN\times T\cN\to (T\cN)^{\perp} $ is the second fundamental form of $ \cN $ at the point $ y\in\cN $. For the Cauchy problem associated with \eqref{heatflow}, we assume
\be
u(\cdot, 0)=u_{0}\in C^{\infty}(\Omega, \cN).\label{initial data}
\ee

Eells and Sampson introduced the concept of harmonic map flow in \cite{ES64} to find harmonic maps in a given free homotopy class. When $ \om $ is a compact, smooth $ n $-manifold without boundary, Eells and Sampson showed that there exist smooth short-time solutions starting from any smooth initial data. Using the Bochner formula, they further revealed that if $ \cN $ has non-positive sectional curvature $ K_{\cN} $, then the solution remains globally smooth and unique. 

On the other hand, without the curvature assumption on $ \cN $, the short-time smooth solution may blow up in finite time. Here we say that a solution of \eqref{heatflow} blows up in finite time if there exists $0<T<+\infty$ such that $u(\cdot, t)$ is smooth in $\Omega\times (0, T)$, while
\[
\lim_{t\uparrow T}\|\na u(\cdot,t)\|_{L^{\ift}(\om)}=+\ift.
\]
In \cite{CDY92}, Chang, Ding, and Ye found a surprising fact: even if the base and target manifolds are both $ \Ss^2 $, solutions of \eqref{heatflow} evolving from smooth initial data can develop singularities in finite time. After that, the construction of blow-up solutions became an active area of research. In a recent breakthrough \cite{DPW20}, D\'{a}vila, del Pino, and Wei constructed an explicit blow-up solution with the desired blow-up profile for the harmonic map flow from  $ \om\subset\R^2 $ to $ \Ss^2 $ with specific initial conditions.

In addition to the classical solutions evolving from smooth initial data, the weak solution is also an essential class of solutions. Unlike classical solutions, the uniqueness of weak solutions cannot be established without additional assumptions. Indeed, Coron \cite{Cor90} gave an example in which the given harmonic map flow has infinitely many weak solutions. If a weak solution has higher a priori regularity, Lin and Wang \cite{LW10} proved that it becomes unique. Freire \cite{Fre95a, Fre95b} later proved the uniqueness of weak solutions with monotonically decreasing energy.

In  \cite{Fel94}, Feldman introduced the notion of suitable weak solutions for \eqref{heatflow}, i.e., weak solutions satisfying a parabolic stationary condition. Motivated by this, we characterize suitable solutions by the following properties.

\begin{defn}[Suitable solution]\label{SuitableSolution}
We call a map $ u:\Omega\times(0,T)\to\cN $ a suitable solution of \eqref{heatflow} if $ \pa_tu,\na u\in L^2(\Omega\times(0,T)) $, and $ u $ satisfies the following properties.
\begin{enumerate}
\item $ u $ is a weak solution of \eqref{heatflow}. That is, for any $ \psi\in C_0^{\ift}(\Omega\times(0,T)) $,
\[
\int_{\Omega\times(0,T)}(\pa_tu\psi+\na u\cdot\na\psi-A(u)(\na u,\na u)\psi)\ud x\ud t=0.
\]
\item $ u $ satisfies the local energy identity: for any $ \theta\in C_0^{\ift}(\Omega\times(0,T)) $, 
\be
\int_{\Omega\times(0,T)}(|\na u|^2\pa_t\theta-2|\pa_tu|^2\theta-2\pa_tu\na u\cdot\na\theta)\ud x\ud t=0.\label{LocalizedEnergy}
\ee
\item $ u $ fulfills the stationary condition: for any $ \xi\in C_0^{\ift}(\Omega\times(0,T),\R^n) $,
\be
\int_{\Omega\times(0,T)}\(|\na u|^2\op{div}_x\xi-2\sum_{i,j=1}^n\pa_i\xi^j\pa_iu\pa_ju-2\pa_tu\na u\cdot \xi\)\ud x \ud t=0.\label{StationaryCondition2}
\ee
\end{enumerate}
Let $\Lambda$ be a positive constant, the set $H_{\Lda}(\Omega\times(0,T),\cN)$ consists of all suitable solutions of \eqref{heatflow} in $\Omega\times(0,T)$ such that $\int_{\Omega\times(0,T)}|\na u|^2\ud x\ud t\leq\Lda.$
\end{defn}

Properties \eqref{LocalizedEnergy} and \eqref{StationaryCondition2} are similar to the stationary condition for harmonic maps (see \cite{NV17}). The use of the stationary condition for harmonic maps
was based on the fact that this condition implies a monotonicity formula. In the case of \eqref{heatflow}, a monotonicity formula also holds if the solutions satisfy \eqref{LocalizedEnergy} and \eqref{StationaryCondition2} (see Proposition \ref{Monotonicity} below).
Moreover, the conditions for defining suitable solutions are natural. Indeed, by using integration by parts, we can check that smooth solutions satisfy \eqref{LocalizedEnergy} and \eqref{StationaryCondition2}. However, a suitable solution may not be smooth everywhere. For a suitable solution $ u:\Omega\times(0,T)\to\cN $ of \eqref{heatflow},  we define the singular set of $ u $ as
\be
\sing(u):=\{X:=(x, t)\in \Omega\times(0,T):~u~\text{is not smooth in any neighborhood of}~X\},\label{SingularSet}
\ee
while the regular set of $ u $ is $ \reg(u):=(\Omega\times(0,T))\backslash\sing(u) $. By definition, it is easy to see that $\sing(u)$ is relatively closed in $\Omega\times(0,T)$.

In the case where the target manifold is $ \cN=\Ss^{d-1}\subset\R^d $, Feldman \cite{Fel94} and Chen-Li-Lin \cite{CLL95} proved the partial regularity of the suitable solutions. That is, there exists a universal positive constant $\va_{0}$ such that
\be
r^{-n}\int_{P_{2r}(X)}|\na u|^2\ud x\ud t <\va_{0}
\ee
implies $u\in C^{\ift}(P_r(X),\cN)$.
Here for $ r>0 $ and $ X=(x,t)\in\R^n\times\R $, we have used the notation that $ P_r(X):=B_r(x)\times(t-r^2,t+r^2) $ denotes the parabolic ball centered at $ X $ with $ B_r(x)$ being the ball in $ \R^n=\R^n\times\{0\}\subset\R^n\times\R $. If $X=0^{n,1}=(0^n,0) $ with $ 0^n $ being the original point of $ \R^n $, we write $P_r:=P_{r}(0^{n,1})$ for simplicity. For maps into homogeneous manifolds, Chen and Wang \cite{CW96} established analogous properties. Later, Liu \cite{Liu03} generalized these results (see Proposition \ref{PartialRegularity} for more information). 

As a consequence of the partial regularity results listed above, Feldman proved that
\[
\dim_{\cP}(\sing(u))\leq n,
\]
and
\[
\dim_{\HH}(\sing(u)\times\{t\})\leq n-2,\text{ for a.e. }t,
\]
where $ \dim_{\HH} $ and $ \dim_{\cP} $ are the Hausdorff and parabolic Hausdorff   dimensions, respectively.

Given the above estimates concerning the size of the singular set, a natural question arises about further structures of $ \sing(u) $. To attack this question, we first introduce the blow-up around a singular point. 
 
 For $ X_0=(x_0,t_0)\in\R^n\times\R $, $ X=(x,t)\in\R^n\times\R $ and $ \lda>0 $, define $ \cP_{X_0,\lda}(X)=(\f{x-x_0}{\lda},\f{t-t_0}{\lda^2}) $. In particular, $ \cP_{X_0,\lda}^{-1}(X)=(x_0+\lda x,t_0+\lda^2t) $.  For a domain $ U\subset\R^n\times\R $, let $ \M(U) $ be the set of all Radon measures on $ U $. Then $ \M(U) $ forms a topological space with the weak${}^*$ topology. 
For  $ \M(P_r(X)) $, we can define a metric consistent with the weak${}^*$ topology. Start by choosing a dense countable subset $ \{f_i\} $ of $ C_0(P_1) $ under the supremum norm $ \|\cdot\|_\ift $. Since $ \{f_i\} $ is dense in $ C_0(P_1) $, the set $ \{f_i\circ\mathcal{P}_{X,r}\} $ is dense in $ C_0(P_r(X)) $. Let
\[
\MM_{X,r}(\mu,\nu):=\sum_{i\in\Z_+}\f{1}{2^i}\cdot\f{r^{-n}\left|\int_{P_r(X)}f_i\circ\cP_{X,r}\ud\mu-\int_{P_r(X)}f_i\circ\cP_{X,r}\ud\nu\right|}{1+r^{-n}\left|\int_{P_r(X)}f_i\circ\cP_{X,r}\ud\mu-\int_{P_r(X)}f_i\circ\cP_{X,r}\ud\nu\right|}
\]
for any $ \mu,\nu\in\M(P_r(X)) $.
For a sequence $ \{\mu_i\}\subset\M(P_r(X)) $, we have
\[
\lim_{i\to+\ift}\MM_{X,r}(\mu_i,\mu)=0\quad\Longleftrightarrow\quad\mu_i\wc^*\mu\text{ in }\M(P_r(X)).
\]

\begin{defn}[Blow-ups]
Given $ r>0 $, an open set $ U\subset\R^n\times\R $ and a point $ X_{0}\in U $, the blow-up of a measurable map $ u:U\to\cN $ at $X_{0} $ with scale $ r $ is $
T_{X_{0},r}u:=u\circ\cP_{X_{0},r}^{-1} $. Similarly, for $ \mu\in\M(U) $, define $
T_{X_{0},r}\mu:=r^{-n}(\mu\circ\cP_{X_{0},r}^{-1}) $.
\end{defn}

Let $ u:P_r(X_0)\to\cN $ be a suitable solution of \eqref{heatflow}, $ X\in P_r(X_0) $ and $\{r_{i}\}$ be a sequence that tends to $0$ as $i\to\infty$. Then 
there exist $w$ and $\mu$ such that up to a subsequence,
\be
\begin{gathered}
w_i:=T_{X,r_i}u\wc w\text{ weakly in }H_{\loc}^1(\R^n\times\R,\cN),\\
\f{1}{2}|\na w_i|^2\ud x\ud t\wc^*\mu:=\f{1}{2}|\na w|^2\ud x\ud t+\nu\text{ in }\M(\R^n\times\R).
\end{gathered}\label{wmunu}
\ee
We call $ w $ and $ \mu $ a tangent flow and a tangent measure of $ u $ at $ X $, respectively. 
 
\subsection{Previous results of stratification theory}

In this subsection, we briefly recall some previous  results for harmonic map flows. First, we introduce several definitions.

\begin{defn}[Backward self-similar]
A measurable map $ w:\R^n\times\R\to\cN $ is called backward self-similar at $ X\in\R^n\times\R $ if, for any $ \lda>0 $, 
\[ T_{X,\lda}w|_{\R^n\times\R_-}=T_{X,1}w|_{\R^n\times\R_-}.
\]
$ \mu\in\M(\R^n\times\R) $ is backward self-similar at $ X $ if, for any $ \lda>0 $, 
\[
(T_{X,\lda}\mu)\llcorner(\R^n\times\R_-)=(T_{X,1}\mu)\llcorner(\R^n\times\R_-),
\]
where $ \R_-=(-\ift,0) $. If $X=0^{n,1}$, we refer to the map or measure as self-similar for simplicity.
\end{defn}
Let $k\in\mathbb{Z}\cap[1, n]$ and $V$ be a $k-$dimensional subspace of  $\mathbb{R}^{n}$. We say $w$ and $ \mu $ are backward invariant with respect to the subspace $ V$ if for each $v\in V$, 
\[
w(y+v,s)=w(y,s)
\]
and
\[
\mu(\cdot+(v, 0))\llcorner(\R^n\times\R_-)=\mu(\cdot)\llcorner(\R^n\times\R_-).
\]
For $ w $ and $ \mu $ as above, let $ V(w) $ and $ V(\mu) $ be the maximal subspaces such that $ w $ and $ \mu $ are backward invariant with respect to $ V(w) $ and $ V(\mu) $, respectively. Define $ \ud(w):=\dim V(w) $ and $ \ud(\mu):=\dim V(\mu) $.

\begin{defn}[Spatial $ k $-symmetry]\label{spatialsym}
For $ k\in\Z\cap[0,n] $, a measurable map $ w:\R^n\times\R\to\cN $ (or a Radon measure $ \mu\in\M(\R^n\times\R) $) is spatially $ k $-symmetric if it is backward self-similar and $ \ud(w)\geq k $ (or $ \ud(\mu)\geq k $).
\end{defn}

To capture the behavior in the time direction, we have to distinguish between the following three cases.

\begin{defn}[Static, quasi-static, and shrinking]
A measurable map $ w $ or a Radon measure $ \mu $ is static if it is independent of $ t\in\R $, quasi-static if it is independent of $ t $ up to some $ T\in[0,+\ift) $, and shrinking otherwise.
\end{defn}

Define $ \rD(w)=\ud(w)+2 $ if $ w $ is static and $ \rD(w)=\ud(w) $ otherwise; similarly for $ \mu $. 

\begin{defn}[Space-time $ k $-symmetry]\label{spacetimesym}
For $ k\in\Z\cap[0,n+2] $, a measurable map $ w $ (or a Radon measure $ \mu $) is space-time $ k $-symmetric if it is backward self-similar and $ \rD(w)\geq k $ (or $ \rD(\mu)\geq k $).
\end{defn}

Having defined $\rD(w)$ and $\rD(\mu)$, we can proceed by classifying points $X\in\text{sing}(u)$ according to the number of symmetries of tangent functions and tangent measures. More precisely, we can introduce a stratification of the singular set as follows.

For $ r>0 $, $ X_0=(x_0,t_0)\in\R^n\times\R, k\in\Z\cap[0,n+2] $ and $ u\in H_{\Lda}(P_r(X_0),\cN) $, define the space-time $ k $-strata as
\begin{align*}
S^k(u)&:=\{X\in P_r(X_0): \rD(w)\leq k~\text{for each tangent flow of }u\text{ at }X\},\\
\Sg^k(u)&:=\{X\in P_r(X_0):\rD(\mu)\leq k~\text{for each tangent measure of }u\text{ at }X\}.
\end{align*}
For $ k\in\Z\cap[0,n] $ and $ t\in(t_0-r^2,t_0+r^2) $, we can also define the spatial $ k $-strata $ \Sg^k(u,t) $ at time $ t $ based on tangent measures. Similarly, we define $ S^k(u,t) $ based on tangent flows as 
\[
\begin{aligned}
S^k(u,t):&=\{x\in B_r(x_0): \ud(w)\leq k~\text{for each tangent flow of }u\text{ at }(x,t)\},\\
\Sg^k(u,t):&=\{x\in B_r(x_0):\ud(\mu)\leq k~\text{for each tangent measure of }u\text{ at }(x,t)\}.
\end{aligned}
\]

By definition, it is easy to check that
\begin{align*}
&S^0(u)\subset S^1(u)\subset...\subset S^{n+2}(u),\\
&\Sg^0(u)\subset\Sg^1(u)\subset...\subset \Sg^{n+2}(u),
\end{align*}
and for any $ t\in(t_0-r^2,t_0+r^2) $,
\begin{gather*}
S^0(u,t)\times\{t\}\subset S^1(u,t)\times\{t\}\subset...\subset S^{n}(u,t)\times\{t\},\\
\Sg^0(u,t)\times\{t\}\subset \Sg^1(u,t)\times\{t\}\subset...\subset\Sg^{n}(u,t)\times\{t\},
\end{gather*}
where for $ E\subset\R^n\times\R $, we have the convention $
E\cap\{t\}:=\{Y=(y,s)\in E:s=t\} $. Due to the abstract stratification theorems given by \cite{Whi97}, we have
\be
\begin{gathered}
\dim_{\cP}(S^k(u)\cup\Sg^k(u))\leq k,\\
\dim_{\HH}(S^k(u,t)\cup\Sg^k(u,t))\leq k\text{ for any }t\in(t_0-r^2,t_0+r^2),\\
\dim_{\HH}((S^k(u)\cap\{t\})\cup(\Sg^k(u)\cap\{t\}))\leq k-2\text{ for a.e. }t\in(t_0-r^2,t_0+r^2).
\end{gathered}\label{StratificaionResults}
\ee
The proof of the above estimates follows from dimensional reduction arguments in the vein of Federer and Almgren. 

In a recent progress \cite{CHN15}, Cheeger, Haslhofer and Naber established that for a suitable solution $ u $ of \eqref{heatflow}, the parabolic Minkowski dimension of $ S^n(u) $ is bounded above by $ n $ under some special assumptions of the target manifold. Although their analysis focuses on tangent flows, their techniques extend naturally to the tangent measures regime. Their proofs are based on the quantitative stratification theory introduced by Cheeger and Naber in \cite{CN13a}. Due to some technical reasons, their methods incurred an $ \va $-loss in the estimates of Minkowski contents. Later, Naber and Valtorta \cite{NV17} refined this framework, introducing a Reifenberg-type theory to produce sharp bounds and optimal rectifiability (see \cite[Chapter 3]{Sim83} for definitions) for each stratum for harmonic maps.

\subsection{Main results, difficulties and strategies}
Inspired by these developments, we will explore spatial stratification of suitable  harmonic map flows using tangent measures.
The first main result in this paper is the following.
\begin{thm}\label{MainTheorem}
Assume $ r>0 , X_0=(x_0,t_0)\in\R^n\times\R $ and $ u:P_{4r}(X_0)\to\cN $ is a suitable solution of the harmonic map flow \eqref{heatflow}. Then for any $ t\in(t_0-r^2,t_0+r^2) $, we have the inclusion 
\be
\Sg^0(u,t)\times\{t\}\subset \Sg^1(u,t)\times\{t\}\subset...\subset\Sg^{n-2}(u,t)\times\{t\}=\sing(u)\cap\{t\}.\label{Topchain}
\ee
Moreover, the following properties hold.
\begin{enumerate}[label=$(\theenumi)$]
\item For any $ k\in\Z\cap[1,n-2] $ and $ t\in(t_0-r^2,t_0+r^2) $, $ \Sg^k(u,t) $ is $ k $-rectifiable. In particular, $ \sing(u)\cap\{t\} $ is $ (n-2) $-rectifiable.
\item Assume  $ \Lda>0 $ and $u\in H_{\Lda}(P_1,\cN)$.  There exists $ C>0 $ depending only on $ \Lda,n $, and $ \cN $ such that
\be
\op{Min}_r^{n-2}(\sing(u)\cap(B_1\times\{t\}))\leq C.\label{MinkowskiContentSgn2}
\ee
In particular, we have
\[
\dim_{\op{Min}}(\sing(u)\cap\{t\})\leq n-2.
\]
\end{enumerate}
Here for $ r>0 $, $ \op{Min}_r^{n-2}(\cdot) $ is the $ n-2 $-dimensional Minkowski $ r $-content $($see \cite[Definition 2.12]{NV17}$)$ and $ \dim_{\op{Min}} $ is the Minkowski dimension $($see \cite[Definition 2.14]{NV17}$)$.
\end{thm}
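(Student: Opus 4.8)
The plan is to transplant the program of Naber and Valtorta \cite{NV17} from harmonic maps to a single time slice $\R^{n}\times\{t\}\cong\R^{n}$ of a suitable solution, using the monotone parabolic density $\Theta_{u}(X,\rho)$ (the normalized weighted energy whose monotonicity in $\rho$ is built into the monotonicity formula underlying Definition \ref{SuitableSolution}) in the role played by Almgren's frequency in the static case. The first step is a quantitative notion of spatial symmetry: for $x\in B_{r}(x_{0})$, $\rho\in(0,1]$, and $\eta>0$, say that $u$ is $(k,\eta,\rho)$-symmetric at $(x,t)$ if the rescaled energy measure $T_{(x,t),\rho}\bigl(\tfrac12|\na u|^{2}\,\ud x\,\ud t\bigr)$, restricted to $P_{1}$, lies within $\eta$ --- in a fixed metric metrizing weak-$*$ convergence of Radon measures on $P_{1}$ --- of some self-similar, spatially $k$-symmetric measure; then set
\[
\Sg^{k}_{\eta,\sg}(u,t):=\{x\in B_{r}(x_{0}):u\text{ is not }(k+1,\eta,\rho)\text{-symmetric at }(x,t)\text{ for any }\rho\in[\sg,1]\}.
\]
A contradiction--compactness argument built on Proposition \ref{BlowUplimits} (which supplies backward self-similarity of tangent measures) gives $\Sg^{k}(u,t)\subset\bigcup_{\eta>0}\bigcap_{\sg>0}\Sg^{k}_{\eta,\sg}(u,t)$, so it is enough to prove, for each fixed $\eta$, a bound on $\op{Min}^{k}_{\sg}\bigl(\Sg^{k}_{\eta,\sg}(u,t)\cap B_{1}\bigr)$ uniform in $\sg$, together with $k$-rectifiability of $\bigcap_{\sg>0}\Sg^{k}_{\eta,\sg}(u,t)$.

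The two analytic inputs are cone-splitting and an $L^{2}$ best-plane estimate. Cone-splitting: if $u$ is $(k,\eta,\rho)$-symmetric about a $k$-plane $V\ni0$ at $(x,t)$ and also $(0,\eta,\rho)$-symmetric at $(x',t)$ with $\dist(x'-x,V)\gtrsim\rho$, then $u$ is $(k+1,\Psi(\eta),c\rho)$-symmetric at $(x,t)$, with $\Psi(\eta)\to0$ as $\eta\to0$; this is proved by compactness using the structure of self-similar tangent measures. Together with monotonicity of $\Theta_{u}$ it produces the usual dichotomy at each dyadic scale: either the density drops along the stratum by a definite amount, or the almost-singular points at that scale lie within a fixed multiple of the scale of a $k$-plane. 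The heart of the matter is the quantitative estimate: for a finite measure $\mu$ carried by $\Sg^{k}_{\eta,\sg}(u,t)$ that one has shown --- inductively, inside the covering --- to be upper $k$-Ahlfors-regular on the relevant range of scales,
\[
\int_{B_{\rho}(x)}\beta^{k}_{\mu}(y,\rho)^{2}\,\ud\mu(y)\le C\int_{B_{2\rho}(x)}\bigl(\Theta_{u}((y,t),2\rho)-\Theta_{u}((y,t),\kappa\rho)\bigr)\,\ud\mu(y),
\]
where $\beta^{k}_{\mu}(y,\rho)^{2}:=\inf_{V}\rho^{-k-2}\int_{B_{\rho}(y)}\dist(z,V)^{2}\,\ud\mu(z)$, the infimum over $k$-planes $V$. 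Following \cite{NV17}, one expands the monotonicity defect --- the nonnegative integrand in the parabolic monotonicity formula, a squared quantity measuring the failure of backward self-similarity --- and reads off from it how far $\supp\mu$ can spread transverse to its best-approximating plane.

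I expect this $L^{2}$ estimate to be the main obstacle. The monotonicity formula for suitable harmonic map flows is genuinely parabolic, so the time variable --- and the static/quasi-static/shrinking trichotomy recorded by $\rD$ --- must be carried through the entire rigidity argument of \cite{NV17}, and one works with the possibly non-absolutely-continuous defect measure $\nu$ rather than a smooth energy density. Granting it, the Minkowski content and rectifiability bounds follow from the Naber--Valtorta inductive covering scheme essentially verbatim: iterate the cone-splitting dichotomy down through scales, sum the density-drop contributions (which telescope and are bounded by $\Lda$ through monotonicity), and feed the resulting $\sum\beta^{2}\lesssim1$ into the rectifiable-Reifenberg theorem of \cite{NV17}, obtaining $k$-rectifiability together with $\op{Min}^{k}_{\sg}\bigl(\Sg^{k}_{\eta,\sg}(u,t)\cap B_{1}\bigr)\le C(n,\Lda,\cN)$. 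Letting $\sg\to0^{+}$ and taking the union over $\eta$ proves part $(1)$: $\Sg^{k}(u,t)$ is $k$-rectifiable for every $k\in\Z\cap[1,n-2]$ and every $t$.

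For the chain \eqref{Topchain} and part $(2)$ it remains to identify the top non-empty spatial stratum. A regular point has trivial tangent measure, hence is spatially $n$-symmetric, so it lies outside $\Sg^{n-2}(u,t)$; thus $\Sg^{n-2}(u,t)\subset\sing(u)\cap\{t\}$. Conversely, if some $x\in\sing(u)\cap\{t\}$ admitted a tangent measure $\mu$ with $\ud(\mu)\ge n-1$, then by Proposition \ref{BlowUplimits} the associated tangent flow is backwardly self-similar and invariant under an $(n-1)$-dimensional spatial subspace, hence descends to a backwardly self-similar suitable solution on $\R^{j}\times\R$ with $j=n-\ud(\mu)\in\{0,1\}$; since the Dirichlet energy is (sub)critical for the heat-flow scaling in these dimensions, this reduced flow is smooth, and backward self-similarity then forces it to be constant, so $\nu=0$ and $\mu=0$. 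By $\va$-regularity (Proposition \ref{PartialRegularity}) this would make $x$ regular --- a contradiction. Hence $\Sg^{n-2}(u,t)=\Sg^{n-1}(u,t)=\sing(u)\cap\{t\}$, which is \eqref{Topchain}; moreover the same reduction shows that every singular point at time $t$ belongs to $\Sg^{n-2}_{\eta_{0},\sg}(u,t)$ for a universal $\eta_{0}>0$ and all $\sg>0$, so running the argument of part $(1)$ with $k=n-2$ over the whole singular slice yields \eqref{MinkowskiContentSgn2}, and therefore $\dim_{\op{Min}}(\sing(u)\cap\{t\})\le n-2$.
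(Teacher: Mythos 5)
Your architecture is the paper's own: quantitative spatial strata measured against self-similar spatially symmetric measures, cone-splitting, an $L^2$-best-plane estimate controlled by the drop of a monotone density across two scales, the Naber--Valtorta covering scheme, and discrete Reifenberg plus an Azzam--Tolsa-type criterion. The part you "grant" is indeed the crux, and the paper's resolution is Proposition \ref{L2BestProp}, proved not by expanding a globally monotone defect but via the second-moment identity of Lemma \ref{NVlem7.5} tested against $(y-z)\cdot\na u-2s\pa_su$ over $B_r(x_0)\times[t_0-4r^2,t_0-r^2]$, which is where the non-nesting of parabolic domains and the merely almost-monotone localized densities are absorbed. The genuine gaps are in the two steps you treat as routine. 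In the top-stratum argument you pass from "$x$ admits a spatially $(n-1)$-symmetric tangent \emph{measure} $\mu$" directly to "the tangent \emph{flow} is invariant under an $(n-1)$-plane", and from smoothness of a dimensionally reduced flow to $\nu=0$. Neither inference is automatic: Proposition \ref{BlowUplimits}\ref{BlowUplimits7} transfers invariance to $v$ only when the directional energies of the approximating sequence vanish, which is not given here; and the defect measure $\nu$ solves no reduced flow, so subcriticality of the harmonic map flow on $\R^j\times\R$ with $j\le 1$ says nothing about it. The paper's Proposition \ref{SgnSgn2} closes both holes differently: invariance of $\mu$ under $V\in\bG(n,n-1)$ makes $\rho\mapsto\Phi(\mu,(x,0),\rho)$ constant for $x\in V$, the limiting monotonicity formula \eqref{Phimu} gives backward self-similarity of $v$ at every point of $V$, cone-splitting upgrades this to $V$-invariance of $v$, and the uniform bound on $\HH^{n-2}$ of time slices of the concentration set (Proposition \ref{BlowUplimits}\ref{BlowUplimits4}) then forces the concentration set, hence $\supp\nu$ and $\sing(v)$, to be empty for $t<0$, since a nonempty $V$-invariant set would have a slice of infinite $\HH^{n-2}$-measure.

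The second gap is the upper scale. You define $\Sg^{k}_{\eta,\sg}(u,t)$ with $\rho\in[\sg,1]$ and assert that every singular point lies in $\Sg^{n-2}_{\eta_0,\sg}(u,t)$ for a universal $\eta_0$. With upper scale $1$ this is precisely what the paper says it cannot prove (see the remark after Lemma \ref{FinaLemma}): quantitative $(n-1,\va)$-symmetry at a unit-order scale does not yield regularity, because the localized densities carry errors of order $r$ and the compactness argument of Lemma \ref{FinaLemma} requires the good scale itself to be small. This is why the paper builds the upper-scale parameter $R$ into Definition \ref{quantitativeSpdef}, proves the inclusion Lemma \ref{InculsionDifferentScales} to move between upper scales, and identifies $\sing(u)\cap\{t_0\}$ with $\Sg_{\va;0,\va}^{n-2}(u,t_0)$ rather than $\Sg_{\va;0,1}^{n-2}(u,t_0)$. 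Your derivation of \eqref{MinkowskiContentSgn2} needs this adjustment; with it, and with the corrected top-stratum argument, the rest of your outline matches the paper's proof.
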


\begin{rem}
The methods presented in this paper for addressing harmonic map flows are universally applicable across different models. For example, consider the Fujita equation
\[
\pa_tu-\Delta u=|u|^{p-1}u
\]
for the supercritical case where $ p>\f{n+2}{n-2} $. We believe that the techniques and arguments used here can, with minor adjustments, yield results parallel to those found in \cite{FWZ24}. Moreover, our approach applies to other systems, such as mean-curvature flows and semilinear parabolic equations with singular nonlinearities. We recommend references \cite{CHN13,CM16,WY24, WZ24} for further exploration.
\end{rem}

The next main theorem presents an improved regularity result for suitable solutions based on additional assumptions regarding the target manifold $ \cN $. To clarify the context, we first define the necessary concepts.

\begin{defn}[Harmonic spheres and Quasi-harmonic spheres]\label{S2sphere}
For $ k\in\Z\cap[1,n-1] $, a harmonic $ k $-sphere on $ \cN $ is a non-constant smooth map $ \psi:\Ss^k\to\cN $ that is a critical point of 
\[
\int_{\Ss^k}|\na_{\Ss^k}\psi|^2\ud x.
\]
A quasi-harmonic $ k $-sphere on $ \cN $ is a non-constant smooth map $ \psi:\R^k\to\cN $ that is a critical point of 
\[
\int_{\R^k}|\na\psi|^2\exp\(-\f{|x|^2}{4}\)\ud x. \] 
We say $ \cN $ does not admit harmonic $ k $-spheres if there is no harmonic $ k $-sphere on $ \cN $, and similarly for quasi-harmonic $ k $-spheres.
\end{defn}

Given the above definitions, we now present the following regularity estimate.

\begin{thm}\label{ImprovementRegularity}
Let $ \Lda>0 $ and $ u\in H_{\Lda}(P_2,\cN) $. Assume that $ \cN $ admits neither harmonic $ 2 $-spheres nor quasi-harmonic $ 2 $-spheres. Then there exists $ C>0 $ that depends only on $ \Lda,n $, and $ \cN $ such that
\be
\sup_{t\in(-1,1)}\|\na u(\cdot,t)\|_{L^{3,\ift}(B_1)}\leq C.\label{LorentzProperty}
\ee
Here, for $ p\in[1,\ift) $, a measurable set $ E\subset\R^n $, and a function $ f:E\to\R $, we have
\[
\|f\|_{L^{p,\ift}(E)}:=\sup_{t>0}t[\cL^n(\{x\in E:|f(x)|>t\})]^{\f{1}{p}}<+\ift.
\]
\end{thm}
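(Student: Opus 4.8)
The plan is to combine the rectifiability and Minkowski-content estimates of Theorem \ref{MainTheorem} with the $\va$-regularity criterion \eqref{SmallRegularity} and a dimension-reduction argument that rules out the small strata. First I would observe that the hypothesis---$\cN$ admits no harmonic $2$-sphere and no quasi-harmonic $2$-sphere---forces every tangent measure $\mu$ of $u$ at a singular point to be spatially at most $1$-symmetric. Indeed, if some tangent measure were spatially $2$-symmetric with respect to a plane $V$, then after quotienting out the $V$-directions the remaining backwardly self-similar object lives on $\R^{n-2}\times\R$ and, by Proposition \ref{BlowUplimits} and the usual blow-up/compactness machinery, would produce either a nonconstant static harmonic map from $\R^{n-2}$ (hence, via a further blow-up at a singular point, a harmonic $2$-sphere by the neck analysis / energy-quantization argument à la Sacks--Uhlenbeck) or a genuinely shrinking self-similar solution $u(x,t)=\psi(x/\sqrt{-t})$ with $\psi$ a nonconstant entire solution of the quasi-harmonic equation, i.e. a quasi-harmonic $2$-sphere. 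Both are excluded, so $\Sg^{n-2}(u,t)=\Sg^1(u,t)$ for every $t$; in fact the singular set at time $t$ coincides with $\Sg^1(u,t)$, and by \eqref{Topchain} together with this collapse we even get $\sing(u)\cap\{t\}=\Sg^1(u,t)$ with Minkowski dimension at most $1$.

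Next I would upgrade this dimensional bound to the uniform Minkowski-content estimate
$\op{Min}_r^{1}\big(\sing(u)\cap(B_1\times\{t\})\big)\le C(\Lda,n,\cN)$,
which is exactly the $k=1$ instance of \eqref{MinkowskiContentSgn2} once we know $\Sg^1(u,t)=\sing(u)\cap\{t\}$. Concretely, $\cL^n\big(B_r(\sing(u)\cap(B_1\times\{t\}))\big)\le C r^{n-1}$. The point of passing to Lorentz spaces is that this is precisely the scaling at which a $\tfrac{1}{|x|}$-type singularity sits on the borderline of $L^{3,\ift}$ in $\R^n$: if $d(x):=\dist(x,\sing(u)\cap\{t\})$, then the distribution function of $d^{-1}$ on $B_1$ is controlled by the Minkowski content, $\cL^n(\{d<\rho\})\le C\rho^{n-1}$, so $d^{-1}\in L^{n-1,\ift}$. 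Since $n\ge 3$, $L^{n-1,\ift}\hookrightarrow L^{3,\ift}$ locally (here one uses $n-1\ge 2$... for $n=3$ the two exponents coincide; for $n\ge4$ the inclusion on the bounded set $B_1$ is genuine), so it suffices to show the \emph{pointwise gradient bound} $|\na u(x,t)|\le C\,d(x)^{-1}$ on $B_1\setminus\sing(u)$.

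This pointwise bound is the technical heart and the main obstacle. The strategy is the standard $\va$-regularity-plus-scaling argument: fix $x\in B_1$ with $\delta:=d(x)>0$. By the Minkowski-content estimate and a Vitali/covering argument one shows that on a parabolic ball $P_{c\delta}(x,t)$ the normalized energy $\theta_u(x,t;c\delta)=(c\delta)^{-n}\int_{P_{c\delta}(x,t)}|\na u|^2$ is small---this is where one needs the content estimate to be genuinely better than the critical power $n$, i.e. one uses that the bad set where the energy fails to be small is contained in a neighborhood of $\sing(u)$ of controlled volume, together with the monotonicity formula (Proposition \ref{BlowUplimits}) and the fact that no spatial $2$-symmetric tangent measure exists, which via a compactness/contradiction argument gives an absolute smallness threshold at scale $\sim\delta$ away from the singular set. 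Once $\theta_u<\va_0$ on $P_{c\delta}(x,t)$, \eqref{SmallRegularity} gives $u\in C^\ift(P_{c\delta/2}(x,t))$ with the interior gradient estimate $\sup_{P_{c\delta/4}(x,t)}|\na u|^2\le C(c\delta)^{-n}\int_{P_{c\delta}(x,t)}|\na u|^2\le C(c\delta)^{-2}\Lda^2\cdot(c\delta)^{n-2}\cdot(c\delta)^{2-n}$---more carefully, the $\va$-regularity estimate directly yields $|\na u(x,t)|\le C\delta^{-1}$. Taking the supremum over $t\in(-1,1)$ and assembling the distribution-function estimate for $|\na u(\cdot,t)|$ from the pointwise bound and the Minkowski content then yields \eqref{LorentzProperty}. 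The delicate points to watch are: (i) extracting the absolute (not merely $\va$-dependent) energy-smallness threshold at the scale $d(x)$, which genuinely uses the exclusion of $2$-symmetric tangent measures rather than just a dimension count; (ii) the uniformity of all constants in $t$, which follows because the monotonicity quantity and the content bound of Theorem \ref{MainTheorem} are uniform in $t\in(-1,1)$; and (iii) handling the endpoint $n=3$ versus $n\ge 4$ in the Lorentz embedding, which is harmless since in every case $|\na u(\cdot,t)|\lesssim d(\cdot)^{-1}$ and $\cL^n(\{d<\rho\})\lesssim\rho^{n-1}$ together force $\|\na u(\cdot,t)\|_{L^{3,\ift}(B_1)}\le C$ directly, bypassing any abstract embedding.
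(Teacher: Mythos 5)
Your proposal contains a fundamental error in the dimension-reduction step, and a secondary gap in the reduction to a pointwise bound.

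\textbf{Main gap: the symmetry direction is reversed.} You argue that a spatially $2$-symmetric tangent measure, after quotienting out the $2$-plane $V$, lives on $\R^{n-2}\times\R$ and then produces a harmonic or quasi-harmonic $2$-sphere. But quotienting out a $2$-plane in $\R^n$ leaves an $(n-2)$-dimensional object, not a $2$-dimensional one; the $2$-dimensional reduced object arises precisely when the tangent measure is spatially \emph{$(n-2)$}-symmetric (quotient by an $(n-2)$-plane). This is exactly what Lemma~\ref{FinaLemma2} captures: under the hypothesis, spatial $(n-2,\va)$-symmetry forces a definite regularity scale, not spatial $2$-symmetry. The correct consequence is therefore that the set of small regularity scale is contained in the quantitative $(n-3)$-stratum $\Sg_{\va;0}^{n-3}(u,t)$, whose Minkowski content obeys $\cL^n\bigl(B_r(\Sg_{\va;0}^{n-3}(u,t)\cap B_1)\bigr)\le Cr^{\,n-(n-3)}=Cr^3$ by Theorem~\ref{MainStratification}. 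Your claim that $\sing(u)\cap\{t\}=\Sg^1(u,t)$, with content $\le Cr^{n-1}$, is not justified by the hypotheses and, in any case, does not yield $L^{3,\ift}$: for $n=3$ the exponent $n-1=2$ is too small (the distribution function decay $\lambda^{-2}$ gives only $L^{2,\ift}$, and $L^{2,\ift}(B_1)\not\subset L^{3,\ift}(B_1)$ on a bounded domain); for $n\ge 5$ the claimed $1$-dimensionality is not available from excluding $2$-spheres alone. The uniform exponent $3$ comes precisely from dropping exactly one dimension, $(n-2)\to(n-3)$.

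\textbf{Secondary gap: the pointwise bound $|\na u|\le Cd^{-1}$ is not established and is not what the paper uses.} The argument in the paper is phrased in terms of the regularity scale $r_u$, not the distance to $\sing(u)$: from Lemma~\ref{FinaLemma2}, $\{x\in B_1: r_u((x,t))<\va r\}$ lies inside $\Sg_{\va;0}^{n-3}(u,t)$, and since $|\na u(x,t)|>r^{-1}$ forces $r_u((x,t))<r$ by Definition~\ref{RegularScale}, one gets $\cL^n(\{|\na u(\cdot,t)|>r^{-1}\})\le Cr^3$ directly. Your proposed route instead requires a uniform $\va$-regularity radius comparable to $\dist(\cdot,\sing(u))$, which is exactly the kind of a priori control the quantitative stratification is designed to replace: points may be far from $\sing(u)$ yet have small regularity scale, so the content of $B_r(\sing(u))$ does not by itself control the distribution of $|\na u|$. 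The paper's approach sidesteps this by controlling the quantitative stratum, which contains the whole small-regularity-scale set and not merely a neighborhood of the singular set.
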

\begin{rem}
The assumptions that $ \cN $ admits neither harmonic nor quasi-harmonic $ 2 $-spheres were also used in \cite{LW99}. They are essential to our proofs because they eliminate certain specific tangent flows and measures. Specifically, the absence of harmonic $ 2 $-spheres ensures that defect measures vanish in the limit, while the disappearance of quasi-harmonic $ 2 $-spheres prevents non-trivial spatially $ (n-2) $-symmetric tangent flows.

In the static setting, similar results are established in \cite{NV17} for Dirichlet energy minimizers. Since it is not clear how to define minimizers for harmonic map flows, we rely on these additional conditions on $ \cN $ to achieve the regularity estimate. 
\end{rem}

\subsubsection{Challenges for our model}
The proofs of the main results are based on the quantitative stratification theory developed in \cite{CN13a, CN13b}, together with the powerful Reifenberg-rectifiable methods introduced by Naber and Valtorta in their seminal work \cite{NV17, NV18}. 

However, the methods mentioned above are not directly applicable to harmonic map flows due to the dynamic nature of flows. Although our study focuses exclusively on spatial stratification, the evolving behavior of flows continues to complicate the analysis. We list several main difficulties as follows.

\emph{1. Lack of nesting property of integral domains.} In the case that $ \psi:\om\to\cN $ is a stationary harmonic map, the density associated with the Dirichlet energy
\[
\Theta(\psi,x,r):=r^{2-n}\int_{B_r(x)}|\na\psi|^2\ud x,\quad B_r(x)\subset\subset\om,
\]
satisfies a monotonicity formula. This monotonicity formula  implies
\[ \Theta(\psi,y,r)\leq C(n)\Theta(\psi,x,2r)\quad\text{for}~y\in B_r(x).
\]

In contrast, harmonic map flows disrupt this simplicity. To illustrate, consider a classical solution $ u:\R^n\times\R\to\cN $ of  \eqref{heatflow}. For a fixed point $ X_0=(x_0,t_0)\in\R^n\times\R $, the  energy density corresponding to $ \Theta(\cdot,\cdot,\cdot) $ is 
\[
E(u,X_0,\rho):=\rho^{-n}\int_{P_{\rho}(X_0)}|\na u|^2\ud x\ud t,
\]
where $ \rho>0 $. Unlike the case of stationary harmonic maps, this quantity lacks a monotonicity formula. We define the backward heat kernel centered at $ X_0 $ as
\[
G_{X_0}(x,t):=\f{1}{(4\pi(t_0-t))^{\f{n}{2}}}\exp\(-\f{|x-x_0|^2}{4(t_0-t)}\),
\]
where $ x\in\R^n $ and $ t<t_0 $. Additionally, for $ \rho>0 $, we  set
\begin{align*}
S_{\rho}(X_0)&:=\{(x,t)\in\R^n\times\R:t=t_0-\rho^2\},\\
T_{\rho}(X_0)&:=\{(x,t)\in\R^n\times\R:t_0-4\rho^2\leq t\leq t_0-\rho^2\}.
\end{align*} 
In \cite{Str88}, Struwe proved that monotonicity holds for alternative densities
\begin{align}
\Psi^0(u,X_0,\rho)&:=\f{\rho^2}{2}\int_{S_{\rho}(X_0)}|\na u|^2(x,t)G_{X_0}(x,t)\ud x,\label{Globaldensity1}\\
\Phi^0(u,X_0,\rho)&:=\f{1}{2}\int_{T_{\rho}(X_0)}|\na u|^2(x,t)G_{X_0}(x,t)\ud x\ud t.\label{Globaldensity2}
\end{align}
As $ \rho $ varies, $ S_{\rho}(X_0) $ forms distinct, non-intersecting slices, and for $ \rho_1\neq\rho_2 $, the symmetric difference \[
T_{\rho_1}(X_0)\Delta T_{\rho_2}(X_0):=(T_{\rho_1}(X_0)\backslash T_{\rho_2}(X_0))\cup(T_{\rho_2}(X_0)\backslash T_{\rho_1}(X_0)) 
\]
is non-empty. Therefore, the integral domains $ S_{\rho}(X_0) $ and $ T_{\rho}(X_0) $ lack the nesting properties of balls or parabolic regions.  These features pose significant obstacles to adapting the arguments from \cite{NV17}.

\emph{2. The lack of the unique continuation property.} Weak solutions obtained as limits of harmonic map flows lack the unique continuation property, a critical tool in \cite{NV18}. This further complicates the analysis. Also, when addressing stratification via tangent measures for a finer characterization of singular sets, we encounter additional difficulties because tangent measures lack unique continuation.

\emph{3. Incompatibility with classical quantitative stratification.} In this paper, we primarily focus on the localized version of the problem  \eqref{heatflow}. Hence, we need to adapt the global densities defined in \eqref{Globaldensity1} and \eqref{Globaldensity2} to localized forms. Following  \cite{LW02b}, we first choose a cut-off function $ \vp\in C_0^{\ift}(B_2) $ such that $ 0\leq\vp\leq 1 $, with $ \vp\equiv 1 $ on $ B_1 $ and $ |\na\vp|\leq 10 $. Suppose $ X_0=(x_0,t_0)\in P_4 $, $ \rho\in(0,2) $, and $ u:P_8\to\cN $ is a suitable solution, we define the localized densities as
\begin{align}
\Psi(u,X_0,\rho)&:=\f{\rho^2}{2}\int_{S_{\rho}(X_0)}\vp_{x_0}^2(x)|\na u(x,t)|^2G_{X_0}(x,t)\ud x,\label{EnergyFunctional2}\\
\Phi(u,X_0,\rho)&:=\f{1}{2}\int_{T_{\rho}(X_0)}\vp_{x_0}^2(x)|\na u(x,t)|^2G_{X_0}(x,t)\ud x\ud t,\label{EnergyFunctional3}
\end{align}
where $ \vp_{x_0}(x)=\vp(x-x_0) $. 

Unlike \eqref{Globaldensity1} and \eqref{Globaldensity2}, these localized densities do not exhibit scaling invariance or a strict monotonicity formula (see Proposition \ref{Monotonicity}). It requires us to establish properties at sufficiently small scales. In contrast, the classical definition of quantitative stratification, as introduced in \cite{CN13a}, does not impose such a limitation on the scale, rendering it incompatible with our framework.

\subsubsection{Our strategies to overcome difficulties}

To address the first two challenges, we draw inspiration from \cite{NV17} and \cite{NV18} to fit our purposes. Specifically, we look to \cite{HSV19} by Hirsch, Stuvard, and Valtorta, which examines energy-minimizing harmonic maps with multiple values, and \cite{EE19} by Edelen and Engelstein, which focuses on nonlinear free boundary problems. In \cite{HSV19}, the authors used cut-off functions to bypass the reliance on the unique continuation and nesting properties of integral domains to define monotone densities. Meanwhile, \cite{EE19} used compactness arguments to avoid these issues altogether.

In this paper, the application of cut-off functions is less straightforward due to the time dimension, making it hard to use arguments in \cite{HSV19} to the evolutionary problem. Fortunately, with careful adjustments, the approach in \cite{EE19} suits the flow setting very well. A key step in this adaptation is to establish the parabolic version of the ``$ L^2 $-best estimates" (Proposition \ref{L2BestProp}), which associates ``displacements" with differences in the ``monotone" localized energy densities across two scales. To our knowledge, this insight has not previously been used in harmonic map flow studies. Regarding the third difficulty, we refine the definition of quantitative stratification by introducing more subtle restrictions on the scales. We also establish inclusion results for quantitative strata at different scales, connecting theorems under this new definition to the classical one. It represents another novelty of our paper.

We organize this paper as follows. In Section \ref{Preliminaries}, we gather some key concepts related to suitable solutions, including the monotonicity formula, partial regularity theory, compactness properties, and blow-up results. In Section \ref{QuantitativeStratification}, we introduce the fundamentals of quantitative spatial stratification. In Section \ref{CoveringLemmas}, we derive several covering results which are vital to proving the main theorems. In Section \ref{Reifenbergtype}, we recall the Reifenberg-type theorems from geometric measure theory. In Section \ref{L2best}, we present the $ L^2 $-best approximation results, highlighting their connections to Reifenberg-type theorems and the monotonicity formula. In Section \ref{DichotomySection}, we establish several useful preliminary results, which are then used in Section \ref{ProofofCoverSection} to complete the proof of the key covering lemma stated in Section \ref{CoveringLemmas}. Finally, in Section \ref{ProofofMainTheorems}, we use the covering lemmas of Section \ref{CoveringLemmas} to finalize the proofs of the main theorems.
\subsection{Notations and conventions} 
\begin{itemize}
\item Throughout this paper, $ C $ will be a positive constant that can change from line to line. To highlight the dependence on parameters $ a,b,... $, we may write $ C(a,b,...) $.
\item For $ u,v\in \R^d $, we use $\<u,v\>_{\R^d} $ or $u\cdot v$ to denote the inner product.
\item For $ k\in\Z\cap[1,n] $, the Grassmannian $ \bG(n,k) $ represents all $ k $-dimensional subspaces of $ \R^n $, while $ \bA(n,k) $ denotes all $ k $-dimensional affine subspaces of $ \R^n $. For a sequence $ \{V_i\}\subset\bG(n,k) $, we say 
\[
V_i\to V\in\bG(n,k)\quad\Longleftrightarrow\quad d_G(V_i,V)\to 0,
\]
where $ d_G(\cdot,\cdot) $ is the Grassmannian metric. Similarly, for $ \{L_i\}\subset\bA(n,k) $, we have 
\[
 L_i=x_i+V_i\to L=x+V\in\bA(n,k)\quad\Longleftrightarrow\quad\{V_i\}\subset\bG(n,k),\,x_i\to x\text{ and }V_i\to V.
\]
\item For a $ k $-dimensional subspace $ V=\op{span}\{v_i\}_{i=1}^k $ with orthonormal basis $ \{v_i\}_{i=1}^k $ and $ u\in H^1(\R^n) $, we define 
\[
|V\cdot\na u|^2:=\sum_{i=1}^k|v_i\cdot\na u|^2.
\] 
\item For $ \al\in[0,n] $, we use $ \HH^{\al}(\cdot) $ to denote the $ \al $-dimensional Hausdorff measure.
\item For $ \al\in[0,n+2] $, $ \cP^{\al}(\cdot) $ denotes the Hausdorff measure of dimension $ \al $ under the parabolic metric 
\[
\dist_{\cP}(X,Y):=(|x-y|^2+|t-s|)^{\f{1}{2}},
\]
where $ X=(x,t),Y=(y,s)\in\R^n\times\R $.  
\item Let $ \mu\in\M(U) $ and $ U\subset\R^k $. For any $ \mu $-measurable set $ E\subset U $ with $ \mu(E)>0 $, we define
\[
\dashint_Ef\ud\mu=\f{1}{\mu(E)}\int_Ef\ud\mu\quad\text{for any }\mu\text{-measurable }f:E\to\R. 
\]
\end{itemize}

\section{Preliminaries}\label{Preliminaries}

\subsection{Localized energy inequality}

In this subsection, we give the localized energy inequality for suitable solutions of harmonic map flows.

\begin{lem}[Localized energy inequality]\label{LocalEnergy}
Let $ r>0 $ and $ X_0=(x_0,t_0)\in P_2 $ be a point such that $ P_r(X_0)\subset P_4 $. Assume $ u:P_4\to\cN $ is a suitable solution of \eqref{heatflow}, then
\be
\begin{aligned}
&\int_{B_r(x_0)\times[s,t]}|\pa_tu|^2\phi^2\ud x\ud\tau+\int_{B_r(x_0)}|\na u(\cdot,t)|^2\phi^2\ud x\\
\leq& \int_{B_r(x_0)}|\nabla u(\cdot,s)|^2\phi^2\ud x+4\int_s^t\int_{B_r(x_0)}|\na u|^2|\na\phi|^2\ud x\ud\tau
\end{aligned}\label{LocalEnergyEq}
\ee
for any $ \phi\in C_0^{\ift}(B_r(x_0)) $ and a.e. $ t_0-r^2<s\leq t<t_0+r^2 $.
\end{lem}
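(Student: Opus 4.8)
The plan is to derive \eqref{LocalEnergyEq} from the differential inequality \eqref{LocalizedEnergy} by choosing a test function $\theta$ of the product form $\theta(x,\tau)=\phi^2(x)\,\eta(\tau)$, where $\phi\in C_0^\infty(B_r(x_0))$ is the given spatial cut-off and $\eta$ is a non-negative temporal cut-off that will eventually be sharpened to an indicator of $[s,t]$. Plugging this into \eqref{LocalizedEnergy} gives
\[
\int_{\om_{T_0,T_1}}\bigl(|\na u|^2\phi^2\,\eta' - 2|\pa_tu|^2\phi^2\eta - 2\,\pa_tu\,\na u\cdot\na(\phi^2)\,\eta\bigr)\ \ge\ 0,
\]
since $\pa_t\theta=\phi^2\eta'$, $\na_x\theta=\eta\,\na(\phi^2)$, and the integrand is supported in $B_r(x_0)$ in space. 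The term $-2\pa_tu\,\na u\cdot\na(\phi^2)=-4\phi\,\pa_tu\,\na u\cdot\na\phi$ is then absorbed by Young's inequality: $|4\phi\,\pa_tu\,\na u\cdot\na\phi|\le 2|\pa_tu|^2\phi^2 + 2|\na u|^2|\na\phi|^2$. This exactly cancels the good term $-2|\pa_tu|^2\phi^2\eta$ (up to the weight $\eta\le 1$, which is where one must be slightly careful — see below) and produces the $4\int|\na u|^2|\na\phi|^2$ error term on the right-hand side of \eqref{LocalEnergyEq}.

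Next I would choose $\eta=\eta_\va$ to be a Lipschitz (hence, after mollification, smooth) approximation of $\mathbf 1_{[s,t]}$: take $\eta_\va$ increasing linearly from $0$ to $1$ on $[s-\va,s]$, equal to $1$ on $[s,t]$, and decreasing linearly from $1$ to $0$ on $[t,t+\va]$. Then $\eta_\va'$ is $+\f1\va$ on $[s-\va,s]$ and $-\f1\va$ on $[t,t+\va]$, so $\int |\na u|^2\phi^2\eta_\va' = \dashint_{s-\va}^{s}\!\!\int|\na u|^2\phi^2 - \dashint_{t}^{t+\va}\!\!\int|\na u|^2\phi^2$ in the averaged sense. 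Using that $\tau\mapsto\int_{B_r(x_0)}|\na u(\cdot,\tau)|^2\phi^2$ is in $L^1_{\loc}$ (from $\na u\in L^2$) and hence has Lebesgue points a.e., letting $\va\to0^+$ recovers $\int|\na u(\cdot,s)|^2\phi^2-\int|\na u(\cdot,t)|^2\phi^2$ at a.e. $s\le t$; meanwhile the weight $\eta_\va$ in the $|\pa_tu|^2$ and cross terms converges monotonically to $\mathbf 1_{[s,t]}$ by dominated/monotone convergence (here one genuinely uses $\pa_tu,\na u\in L^2(\om_{T_0,T_1})$). Rearranging yields \eqref{LocalEnergyEq}.

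The main obstacle is the bookkeeping around the temporal weight $\eta$ in the Young-inequality step: the good term appears with weight $\eta$, whereas after applying Young one wants to cancel $2|\pa_tu|^2\phi^2$ without the weight, and on the approximating intervals $[s-\va,s]\cup[t,t+\va]$ one has $0\le\eta_\va<1$, so the cancellation is not exact there. The fix is that on those two short intervals the cross term and the $|\pa_tu|^2$ term are both controlled by quantities that vanish as $\va\to 0$ (their integrals are $O$ of the integral of an $L^1$ function over a set of measure $2\va$), so the defect is negligible in the limit; alternatively one first proves the inequality with $\int_{B_r(x_0)\times[s,t]}|\pa_tu|^2\phi^2$ replaced by $\int_{B_r(x_0)\times[s,t]}|\pa_tu|^2\phi^2\eta_\va$ and passes to the limit by monotone convergence since $\eta_\va\uparrow \mathbf 1_{[s,t]}$. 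One should also note that the restriction $P_r(X_0)\subset P_4$ guarantees $\eta_\va$ can be taken with support inside $(t_0-r^2,t_0+r^2)\subset(T_0,T_1)$ for all $s,t$ in the stated range and all small $\va$, so $\theta\in C_0^\infty(\om_{T_0,T_1})$ is legitimate; and that $\R_{\geq 0}$-valuedness of $\theta$ is clear since $\phi^2\eta_\va\ge0$.
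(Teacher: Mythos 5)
Your overall strategy is the same as the paper's: test the localized energy inequality \eqref{LocalizedEnergy} with $\theta=\phi^2(x)\eta(\tau)$ for a temporal approximation of $\mathbf 1_{[s,t]}$, apply Young's inequality to the cross term, and pass to the limit at Lebesgue points of $f_\phi(\tau)=\int_{B_r(x_0)}|\na u(\cdot,\tau)|^2\phi^2$ (the paper uses an inner approximation $\beta_\va\in C_0^\ift((s,t))$ with $\beta_\va\equiv1$ on $[s+\va,t-\va]$ rather than your outer one; this difference is immaterial, and it also renders moot your worry about the support of $\theta$).

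However, your Young-inequality step is wrong as written, and with it the proof does not yield \eqref{LocalEnergyEq}. After testing, the inequality reads
$$
\int|\na u|^2\phi^2\eta'\ \ge\ 2\int|\pa_tu|^2\phi^2\eta+\int 4\phi\,\pa_tu\,\na u\cdot\na\phi\,\eta .
$$
Your bound $|4\phi\,\pa_tu\,\na u\cdot\na\phi|\le 2|\pa_tu|^2\phi^2+2|\na u|^2|\na\phi|^2$ cancels the term $2\int|\pa_tu|^2\phi^2\eta$ \emph{completely}, leaving no $|\pa_tu|^2$ contribution on the left-hand side of \eqref{LocalEnergyEq} and producing the error constant $2$, not $4$; your text is also internally inconsistent, since you simultaneously claim exact cancellation and the survival of the $|\pa_tu|^2$ term. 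The correct split is the asymmetric one, $4ab\le a^2+4b^2$, i.e. $|4\phi\,\pa_tu\,\na u\cdot\na\phi|\le |\pa_tu|^2\phi^2+4|\na u|^2|\na\phi|^2$, which retains $\int|\pa_tu|^2\phi^2\eta$ with coefficient $1$ and gives exactly the constant $4$ in \eqref{LocalEnergyEq}. With that correction (and then dominated convergence for the terms weighted by $\eta_\va$, as you describe), the argument goes through; your concern about the "weight mismatch" between $\eta$ and $1$ disappears entirely once the cancelled term is the one carrying the weight $\eta$ on both sides.
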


\begin{proof}
Let $\{\xi_{k}\}$ be a sequence of smooth functions that approximate the characteristic function $\chi_{[s,t]}$. Testing \eqref{LocalizedEnergy} against $\xi_{k}\phi$ and passing to the limit $k\to+\infty$ yields \eqref{LocalEnergyEq}. We refer the readers to \cite[Proposition 8]{Fel94} for more details.
\end{proof}

Choosing some specific $ \phi $ in \eqref{LocalEnergyEq} and using average arguments, we have the following result.

\begin{cor}\label{LocalImprovedEstimates}
Let $ \Lda>0 $, $ r>0 $, and $ X_0=(x_0,t_0)\in\R^n\times\R $. If $ u:P_{2r}(X_0)\to\cN $ is a suitable solution of \eqref{heatflow}, then there exists $ C>0 $, depending only on $ n $ such that for a.e. $ t\in(t_0-r^2,t_0+r^2) $,
\[
\int_{B_r(x_0)}|\na u(\cdot,t)|^2\ud x+\int_{P_r(X_0)}|\pa_{\tau}u|^2\ud x\ud\tau\leq \f{C}{r^2}\int_{P_{2r}(X_0)}|\na u|^2\ud x\ud t.
\]
\end{cor}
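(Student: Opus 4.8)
The plan is to derive this directly from the localized energy inequality of Lemma~\ref{LocalEnergy} applied at scale $2r$, together with an averaging over the lower time endpoint to absorb the term involving $|\na u(\cdot,s)|^2$. Since the conclusion of Lemma~\ref{LocalEnergy} only involves test functions supported in the parabolic cylinder in question (equivalently, after translating $X_0$ to the origin and parabolically rescaling, or by inspecting its proof, where $\theta=\beta_\va\phi^2$ is supported in $P_r(X_0)$), it holds whenever $u$ is a suitable solution on $P_{2r}(X_0)$. Fixing $\phi\in C_0^\ift(B_{2r}(x_0))$ with $0\le\phi\le1$, $\phi\equiv1$ on $B_r(x_0)$, and $|\na\phi|\le C(n)/r$, the lemma at scale $2r$ gives, for a.e.\ pair $t_0-4r^2<s\le t<t_0+4r^2$,
\begin{multline*}
\int_{B_r(x_0)\times[s,t]}|\pa_tu|^2+\int_{B_r(x_0)}|\na u(\cdot,t)|^2\\
\le\int_{B_{2r}(x_0)}|\na u(\cdot,s)|^2+\f{C(n)}{r^2}\int_{P_{2r}(X_0)}|\na u|^2 ,
\end{multline*}
where we used $\phi\equiv1$ on $B_r(x_0)$ and $|\na\phi|^2\le C(n)/r^2$ on all of $B_{2r}(x_0)$.

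For the gradient term, fix a time $t\in(t_0-r^2,t_0+r^2)$ for which the displayed inequality holds for a.e.\ $s$; by Fubini this is the case for a.e.\ such $t$. Discard the nonnegative $\pa_tu$ term on the left and average the inequality over $s$ in the interval $(t_0-4r^2,t_0-3r^2)$, which has length $r^2$, lies to the left of $t$, and is contained in $(t_0-4r^2,t_0+4r^2)$. The average of $s\mapsto\int_{B_{2r}(x_0)}|\na u(\cdot,s)|^2$ over this interval is at most $\f{1}{r^2}\int_{P_{2r}(X_0)}|\na u|^2$, and the remaining right-hand term is independent of $s$, so we obtain $\int_{B_r(x_0)}|\na u(\cdot,t)|^2\le\f{C(n)}{r^2}\int_{P_{2r}(X_0)}|\na u|^2$ for a.e.\ $t\in(t_0-r^2,t_0+r^2)$.

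For the time-derivative term, note that $u$ is defined on all of $P_{2r}(X_0)=B_{2r}(x_0)\times(t_0-4r^2,t_0+4r^2)$, so we may instead pick a good $t\in(t_0+3r^2,t_0+4r^2)$; then for $s\in(t_0-4r^2,t_0-3r^2)$ one has $[t_0-r^2,t_0+r^2]\subset[s,t]$, hence $\int_{P_r(X_0)}|\pa_tu|^2\le\int_{B_r(x_0)\times[s,t]}|\pa_tu|^2$. Averaging over $s$ in that interval (for this fixed $t$) and discarding the nonnegative $|\na u(\cdot,t)|^2$ term yields $\int_{P_r(X_0)}|\pa_tu|^2\le\f{C(n)}{r^2}\int_{P_{2r}(X_0)}|\na u|^2$, and adding the two estimates proves the corollary. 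The argument is essentially routine; the only point requiring care is the choice of the time intervals, which must be positioned so that $[s,t]$ covers $[t_0-r^2,t_0+r^2]$ for the $\pa_tu$ term while still leaving room to average $s$ over a set of measure of order $r^2$ — as arranged above — and so that the various a.e.\ exceptional sets in $s$ and $t$ do not interfere.
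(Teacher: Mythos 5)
Your proof is correct and follows essentially the same route as the paper's: apply the localized energy inequality of Lemma \ref{LocalEnergy} at scale $2r$ with a cutoff between $B_r(x_0)$ and $B_{2r}(x_0)$, and average over the initial time $s$ in a subinterval of length comparable to $r^2$ to control the term $\int_{B_{2r}(x_0)}|\na u(\cdot,s)|^2$ by $r^{-2}\int_{P_{2r}(X_0)}|\na u|^2$. Your treatment of the $\pa_t u$ term is in fact slightly more careful than the paper's, since you explicitly take the upper time $t$ in $(t_0+3r^2,t_0+4r^2)$ so that $[s,t]$ covers $[t_0-r^2,t_0+r^2]$, a point the paper's proof glosses over.
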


\subsection{Monotonicity formula} Recall the localized densities $ \Psi(\cdot,\cdot,\cdot) $ and $ \Phi(\cdot,\cdot,\cdot) $ as in \eqref{EnergyFunctional2} and \eqref{EnergyFunctional3}. For $ 0<r<R $, we set
\be
\cW(u,X_0,R,r):=\Phi(u,X_0,R)-\Phi(u,X_0,r).\label{Difference}
\ee

The next proposition is the aforementioned localized energy monotonicity formula of Struwe's \cite[Lemma 3.2]{Str88} type for suitable solutions.
\begin{prop}[Monotonicity formula]\label{Monotonicity}
 Assume $\Lambda>0$, $ X_0=(x_0,t_0)\in P_4 $ and  $ u\in H_{\Lda}(P_8,\cN) $. Then 
for a.e. $ 0<r\leq R<2 $,
\begin{align*}
&\Psi(u,X_0,R)-\Psi(u,X_0,r)+C_1(R-r)\\
&\quad\quad\geq C_2\int_{t_0-R^2}^{t_0-r^2}\(\int_{B_1(x_0)}\vp_{x_0}^2\f{|(x-x_0)\cdot\na u+2(t-t_0)\pa_tu|^2}{|t_0-t|}G_{X_0}(x,t)\ud x\)\ud t,
\end{align*}
and for a.e. $ 0<r\leq R<2 $,
\begin{align*}
&\Phi(u,X_0,R)\exp(C_1(R-r))-\Phi(u,X_0,r)+C_1(R-r)\\
&\quad\quad\geq C_2\int_r^R\(\int_{T_{\rho}(X_0)}\vp_{x_0}^2\f{|(x-x_0)\cdot\na u+2(t-t_0)\pa_tu|^2}{|t_0-t|}G_{X_0}(x,t)\ud x\ud t\)\f{\ud\rho}{\rho},
\end{align*}
where $ C_1,C_2>0 $ depend only on $ \Lda $ and $ n $.
\end{prop}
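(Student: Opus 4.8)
The plan is to derive both inequalities from a single Rellich–Pohozaev type identity obtained by inserting the scaling vector field $\xi(x) = \vp_{x_0}^2(x)(x-x_0)G_{X_0}(x,t)$ (and the companion time component $\theta = 2\vp_{x_0}^2(t-t_0)G_{X_0}$) into the stationary condition \eqref{StationaryCondition2} and the localized energy inequality \eqref{LocalizedEnergy}. The motivation is the classical Struwe computation behind \eqref{Globaldensity1}–\eqref{Globaldensity2}: for the global densities $\Psi^0$ and $\Phi^0$ one gets an exact monotonicity whose defect term is precisely $\int |(x-x_0)\cdot\na u + 2(t-t_0)\pa_t u|^2 / |t_0-t|\, G_{X_0}$, coming from the fact that $G_{X_0}$ is the backward heat kernel and that $(x-x_0)\cdot\na u + 2(t-t_0)\pa_t u$ is the infinitesimal generator of parabolic rescaling. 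Here the cutoff $\vp_{x_0}$ destroys exact scaling invariance, so derivatives falling on $\vp_{x_0}$ produce error terms; these are supported in $B_2(x_0)\setminus B_1(x_0)$, where $G_{X_0}(x,t)$ and all its derivatives decay like $e^{-c/|t_0-t|}$ for $t$ near $t_0$, hence are bounded by $C(n)\Lda$ uniformly — this is what yields the additive constants $C_1(R-r)$ rather than exact monotonicity.

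The steps I would carry out, in order: \textbf{(1)} Write down, for a.e.\ fixed $\rho$, the derivative $\frac{d}{d\rho}\Psi(u,X_0,\rho)$. Since $\Psi$ is an integral over the single time slice $S_\rho(X_0) = \{t = t_0-\rho^2\}$, differentiating in $\rho$ brings down both the explicit $\rho^2$ prefactor, the $\rho$-dependence of the kernel $G_{X_0}$ restricted to that slice, and — after using the equation \eqref{heatflow} to trade $\pa_t u$ against $\Delta u$ plus the normal term $A(u)(\na u,\na u)$, which is orthogonal to $\na u$ and to $\pa_t u$ and so drops out — an integration by parts in $x$ that is legitimate because $\vp_{x_0}$ is compactly supported. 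Collecting terms, the "good" part assembles into $C_2 \rho^{-1}\int_{S_\rho}\vp_{x_0}^2 |(x-x_0)\cdot\na u + 2(t-t_0)\pa_t u|^2 / |t_0-t|\, G_{X_0}\,dx$ and the remainder is the cutoff error $\le C_1$. Integrating this differential inequality over $\rho\in[\sqrt{r^{1/?}}\,...]$ — more precisely, recognizing that $\Psi$ at scale $\rho$ corresponds to the time slice $t_0-\rho^2$, so integrating $d\Psi$ from $r$ to $R$ against $d\rho$ and changing variables $t = t_0-\rho^2$, i.e.\ $dt = -2\rho\,d\rho$ — produces the first displayed inequality, with the defect term over $\{t\in(t_0-R^2,t_0-r^2)\}$. \textbf{(2)} For $\Phi$, note $\Phi(u,X_0,\rho) = \tfrac12\int_{T_\rho(X_0)}\vp_{x_0}^2|\na u|^2 G_{X_0}$ where $T_\rho(X_0)$ is the time band $[t_0-4\rho^2, t_0-\rho^2]$; one can either relate $\Phi$ to an average of $\Psi$ over a dyadic range of scales (the standard trick: $\Phi(u,X_0,\rho)$ is comparable to $\int_\rho^{2\rho}\Psi(u,X_0,\sigma)\,\tfrac{d\sigma}{\sigma}$ up to kernel-bounded errors) and integrate the differential inequality for $\Psi$, or differentiate $\Phi$ directly in $\rho$ — now $T_\rho$ moves, so $\tfrac{d}{d\rho}\Phi$ picks up boundary terms on the two slices $\{t = t_0-\rho^2\}$ and $\{t = t_0-4\rho^2\}$ in addition to the interior kernel-derivative term. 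The cutoff and boundary-slice errors are again $O(\Lda)$, but here they are not integrable in $\tfrac{d\rho}{\rho}$ near $\rho = 0$ without the exponential factor, which is why the $\Phi$-monotonicity comes with $\exp(C_1(R-r))$: one writes $\tfrac{d}{d\rho}\big(e^{C_1\rho}\Phi\big) \ge C_2 \rho^{-1}(\text{defect}) - C_1 e^{C_1\rho}$ and integrates from $r$ to $R$. \textbf{(3)} Assemble, absorb constants depending only on $\Lda$ and $n$ (using the energy bound $\|\na u\|_{L^2(P_8)}\le\Lda$ to control every error integral, via the pointwise bound $|\na^j_{x,t}(\vp_{x_0}^2 G_{X_0})|\le C(n,j)$ on $\supp(\na\vp_{x_0})$), and note the "a.e." restriction for $\Psi$ comes from the fact that $f_\phi(\tau) = \int|\na u(\cdot,\tau)|^2\phi^2$ is only $L^1_{loc}$ in time so the pointwise differentiation of the slice integral is valid only at its Lebesgue points, whereas $\Phi$ — being an integral over a time band — is absolutely continuous in $\rho$ and the estimate holds for every $0<r\le R<2$.

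The main obstacle, I expect, is \textbf{step (1): making the integration by parts and the slice-derivative computation rigorous at the level of a suitable (merely $H^1$ with $\pa_t u\in L^2$) solution rather than a smooth one.} For a classical solution the identity is a direct calculation, but here one must justify that the distributional identity \eqref{StationaryCondition2}, tested against $\xi(x,t) = \beta_\va(t)\vp_{x_0}^2(x)(x-x_0)G_{X_0}(x,t)$ with a temporal cutoff $\beta_\va$ as in the proof of Lemma~\ref{LocalEnergy}, converges as $\va\to 0^+$ to the desired pointwise-in-$\rho$ statement at Lebesgue points of the relevant time-slice functionals, and simultaneously that the localized energy inequality \eqref{LocalizedEnergy} tested against the matching $\theta$ contributes the $\pa_t u$ terms with the correct sign (this is where the inequality, rather than equality, in the $\Psi$-monotonicity enters — the $-2|\pa_t u|^2\theta$ term in \eqref{LocalizedEnergy} has a favorable sign). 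One must be careful that $G_{X_0}(x,t)$ is singular as $t\uparrow t_0$, but since all the densities are evaluated at scales $\rho$ bounded away from $0$ on any fixed compact $\rho$-interval and the cutoff $\vp_{x_0}$ is away from nothing, the singularity is only approached in the limit defining the density at $X_0$ itself and causes no trouble in the monotonicity statement for $\rho>0$. The bookkeeping of which error terms are $O(\Lda)$-bounded versus which must be carried as the exponential correction is the delicate part, but it is morally identical to the cutoff-monotonicity arguments in \cite{HSV19} and the flow computation in \cite{Str88}, and I would organize the proof to isolate a single lemma — "the cutoff error is pointwise bounded by $C(n)\Lda$" — and then reduce both displays to the smooth-case identity plus that lemma.
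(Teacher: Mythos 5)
Your proposal is correct and follows the same route as the paper, whose proof consists entirely of the citation to \cite[Lemma 2.4]{Wan08}, i.e.\ the localized Struwe-type monotonicity computation that you have sketched: test \eqref{StationaryCondition2} and \eqref{LocalizedEnergy} against the parabolic-scaling vector field weighted by $\vp_{x_0}^2\, G_{X_0}$ (with a temporal cutoff to stay within the admissible test class), absorb the cutoff errors using the rapid decay of $G_{X_0}$ on $\supp\na\vp_{x_0}$ together with the energy bound from Corollary~\ref{LocalImprovedEstimates}, and integrate the resulting differential inequality, correctly locating the a.e.\ restriction for $\Psi$ at Lebesgue points of the time-slice energy. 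One small remark: the factor $\exp(C_1(R-r))$ in the $\Phi$-inequality arises from a Gronwall correction (part of the cutoff error in $\tfrac{d}{d\rho}\Phi$ is proportional to $\Phi$ itself), not from a failure of integrability of the error against $\tfrac{d\rho}{\rho}$ as your heuristic suggests.
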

\begin{proof}
The results follow from  \cite[Lemma 2.4]{Wan08}.
\end{proof}
As a consequence of Proposition \ref{Monotonicity}, we have the following result.

\begin{cor}\label{corselfsimilar}
Let $ u:P_r(X_0)\to\cN $ be a suitable solution of \eqref{heatflow}, $ X\in P_r(X_0) $, and let $\{r_i\}$ be a sequence with $ r_i\to 0 $ as $ i\to\infty $. Assume
\begin{gather*}
w_i:=T_{X,r_i}u\rightharpoonup w \quad \text{weakly in } H_{\loc}^1(\R^n\times\R,\cN),\\
\frac{1}{2}|\nabla w_i|^2\ud x\ud t \rightharpoonup^* \mu:=\frac{1}{2}|\nabla w|^2\ud x\ud t+\nu \quad \text{in } \M(\R^n\times\R),
\end{gather*}
Then $ w $, $ \mu $, and $ \nu $ are backward self-similar.
\end{cor}

\begin{proof}
The argument here is inspired by the proof of a related result in the Ginzburg–Landau setting (see \cite[Lemma 3.2]{LW02b}). Without loss of generality, we assume $ X=0^{n,1} $. Using Proposition \ref{Monotonicity}, the limit $ \lim_{\rho\to 0^+}\Phi(u,0^{n,1},\rho) $
exists. Hence, for a.e. $ -\infty<r<R<+\infty$, we have
\begin{align*}
0
&\leq \limsup_{i\to+\infty}\int_r^R\left(\int_{T_{\rho}(0^{n,1})}\vp_{0^n}^2(r_i y)\frac{|y\cdot\nabla w_i+2s\partial_s w_i|^2}{|s|}G_{0^{n,1}}(y,s)\ud y\ud s\right)\frac{\ud\rho}{\rho}\\
&\leq \limsup_{i\to+\infty} C(\Lambda,n,\cN,R,r)(\Phi(u,0^{n,1},Rr_i)-\Phi(u,0^{n,1},rr_i)+r_i)=0.
\end{align*}
It follows that
\be
\lim_{i\to+\infty}\int_U |y\cdot\nabla w_i+2s\partial_s w_i|^2=0 \label{Uintegral}
\ee
for any open set $ U $ with $ \overline{U}\subset \R^n\times\R $. Passing to the limit and using the weak convergence of $ w_i $, we deduce that $ w $ is backward self-similar.

It remains to prove that $ \mu $ is backward self-similar. Let $ \phi\in C_0^\infty(\R^n\times\R_-) $ and define
\[
J(\lambda):=\int_{\R^n\times\R_-}\phi(\lambda y,\lambda^2 s)G_{0^{n,1}}(y,s)\ud\mu(y,s).
\]
Set $ G=G_{0^{n,1}}, \phi_\lambda(y,s)=\phi(\lambda y,\lambda^2 s) $ and 
\[
J_i(\lambda):=\f{1}{2}\int_{\R^n\times\R_-}\phi_\lambda G|\nabla w_i|^2\ud y\ud s.
\]
By the convergence of measures, $ J_i'(\lambda)\to J'(\lambda) $ as $ i\to+\infty $. 

We now derive an identity for $ J_i'(\lambda) $. Testing \eqref{LocalizedEnergy} with $ \theta=s\phi_\lambda G $,\footnote{\eqref{LocalizedEnergy} is assumed as an equality, slightly stronger than in \cite{Fel94}, only to recover the exact identity \eqref{testuse11}.} we have
\be
\begin{aligned}
0
&=\int_{\R^n\times\R_-}\left[\left(\frac{n}{2}-1\right)\phi_\lambda - s\partial_s\phi_\lambda + \frac{|y|^2}{4s}\phi_\lambda\right]G|\nabla w_i|^2 \\
&\quad +\int_{\R^n\times\R_-}(2sG\partial_s w_i(\nabla w_i\cdot\nabla\phi_\lambda)+2s\phi_\lambda G|\partial_s w_i|^2
+\phi_\lambda G\partial_s w_i(y\cdot\nabla w_i)).
\end{aligned}\label{testuse11}
\ee
Next, testing \eqref{StationaryCondition2} with $ \xi=y\phi_\lambda G $, we obtain
\begin{align*}
0&=\int_{\R^n\times\R_-}\left((n-2)\phi_\lambda + y\cdot\nabla\phi_\lambda + \frac{|y|^2}{2s}\phi_\lambda\right)G|\nabla w_i|^2 \\
&\quad-\int_{\R^n\times\R_-}\(2\phi_\lambda G\partial_s w_i(y\cdot\nabla w_i)+2G(y\cdot\nabla w_i)(\nabla\phi_\lambda\cdot\nabla w_i)
+\frac{\phi_\lambda G}{s}|y\cdot\nabla w_i|^2\).
\end{align*}
Combining the above two identities, we obtain
\begin{align*}
\int_{\R^n\times\R_-}(y\cdot\nabla\phi_\lambda+2s\partial_s\phi_\lambda)G|\nabla w_i|^2
&=2\int_{\R^n\times\R_-}G(\nabla\phi_\lambda\cdot\nabla w_i)(y\cdot\nabla w_i+2s\partial_s w_i)\\
&\quad+\int_{\R^n\times\R_-}\frac{\phi_\lambda G}{s}|y\cdot\nabla w_i+2s\partial_s w_i|^2.
\end{align*}
A direct computation shows that
\begin{align*}
\lambda J_i'(\lambda)
&=\int_{\R^n\times\R_-}G(\nabla\phi_\lambda\cdot\nabla w_i)(y\cdot\nabla w_i+2s\partial_s w_i)+\int_{\R^n\times\R_-}\frac{\phi_\lambda G}{2s}|y\cdot\nabla w_i+2s\partial_s w_i|^2.
\end{align*}
Using \eqref{Uintegral} and Cauchy's inequality, we take the limit $ i\to+\infty $ and conclude that $ J'(\lambda)=0 $ for all $ \lambda>0 $. This proves that $ \mu $ is backward self-similar. The same property holds for $ \nu $, which completes the proof.
\end{proof}

The following lemma is the key observation in the original dimension reduction argument. It is frequently applied throughout the remainder of the paper.

\begin{lem}\label{ConeSplittingCor}
For $ k\in\Z\cap[0,n-1] $, let $ X=(x,0)\in\R^n\times\R $ with $ x\neq 0^n $, and $ V\in\bG(n,k) $. If a measurable map $ u:\R^n\times\R\to\R^d $ and a Radon measure $ \mu\in\M(\R^n\times\R) $ are backward self-similar at $ 0^{n,1},X $ and backward invariant with respect to $ V $, then both $ u $ and $ \mu $ are backward invariant with respect to $ \op{span}\{x,V\}\in\bG(n,k+1) $.
\end{lem}
For $ r\in(0,2) $, we define the backward energy density as
\[
E_-(u,X_0,r):=r^{-n}\int_{P_r^-(X_0)}|\na u|^2\ud x\ud t,
\]
 where $ P_r^-(X_0):=B_r(x_0)\times(t_0-r^2,t_0) $. The next lemma builds the relationship between the backward energy density and the energy density defined on a full parabolic ball.

\begin{lem}\label{ComparisonEandPhi}
Let $ \Lda>0 $, $ r\in(0,1) $ and $ X_0=(x_0,t_0)\in P_4 $. Assume that $ u\in H_{\Lda}(P_8,\cN) $ is a suitable solution of \eqref{heatflow}. There is $ \delta_0\in(0,\f{1}{4}) $, depending only on $ n $ and $\Lambda
$ such that if $ \delta\in(0,\delta_0) $, then
\be
E(u,X_0,\delta r)\leq C(\delta^{-n}E_-(u,X_0,2r)+\delta r),\label{EuX0Bound}
\ee
where $ C>0 $ depends only on $ \Lda $ and $ n $.
\end{lem}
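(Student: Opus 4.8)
The plan is to estimate the forward-in-space-time energy $E(u,X_0,\delta r)$ by relating it to the Gaussian-weighted backward density $\Phi(u,X_0,\cdot)$, which enjoys the monotonicity formula of Proposition \ref{Monotonicity}, and then to bound $\Phi$ in terms of the purely backward energy $E_-(u,X_0,2r)$. After a parabolic rescaling and translation we may assume $r=1$ and $X_0=0^{n,1}$, so the goal becomes $E(u,0^{n,1},\delta)\leq C(\delta^{-n}E_-(u,0^{n,1},2)+\delta)$, with the cut-off $\vp$ identically $1$ on $B_1$ and hence on $B_\delta$ for $\delta<\tfrac14$.

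\emph{Step 1: Control $E$ by $\Phi$ at comparable scales.} The key point is that on $P_\delta^-(X_0)$ the backward heat kernel $G_{X_0}$ is bounded below by a constant multiple of $\delta^{-n}$: for $(x,t)$ with $t_0-\delta^2<t<t_0$ and $|x-x_0|<\delta$ one has $t_0-t<\delta^2$ and $|x-x_0|^2/(4(t_0-t))$ bounded, so $G_{X_0}(x,t)\geq c(n)\delta^{-n}$. Since $\vp_{x_0}\equiv 1$ there, integrating $|\na u|^2$ against $G_{X_0}$ over a parabolic annulus of scales between $\delta$ and $2\delta$ (sitting inside $T_\rho(X_0)$ for an appropriate $\rho\asymp\delta$) gives $\delta^{-n}\int_{P_\delta^-(X_0)}|\na u|^2\leq C\,\Phi(u,X_0,C\delta)$, i.e. $E_-(u,X_0,\delta)\leq C\Phi(u,X_0,C\delta)$. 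One then passes from the half parabolic cylinder $P_{\delta r}^-$ to the full $P_{\delta r}$ appearing in $E$: the localized energy inequality (Lemma \ref{LocalEnergy}, or Corollary \ref{LocalImprovedEstimates} applied at scale $\delta$) controls $\int_{B_\delta(x_0)}|\na u(\cdot,t)|^2$ for $t\in(t_0,t_0+\delta^2)$ by the energy at earlier times, so $E(u,X_0,\delta)\leq C\,E_-(u,X_0,C\delta)\leq C\Phi(u,X_0,C\delta)$, after adjusting $\delta_0$ so that $C\delta<2$.

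\emph{Step 2: Bound $\Phi$ at scale $C\delta$ by the backward energy at scale $2$.} Here I use the monotonicity formula for $\Phi$ in Proposition \ref{Monotonicity}: since the right-hand side integral is nonnegative, $\Phi(u,X_0,C\delta)\leq \Phi(u,X_0,\rho_0)\exp(C_1(\rho_0-C\delta))+C_1(\rho_0-C\delta)$ for a fixed scale $\rho_0\in(1,2)$, so $\Phi(u,X_0,C\delta)\leq C\big(\Phi(u,X_0,\rho_0)+\delta\big)$. It remains to bound $\Phi(u,X_0,\rho_0)$ by $E_-(u,X_0,2)$. On $T_{\rho_0}(X_0)\subset B_2(x_0)\times(t_0-4\rho_0^2,t_0-\rho_0^2)\subset P_2^-(X_0)$ (using $\rho_0<2$) one has $\vp_{x_0}^2\leq 1$ and $G_{X_0}(x,t)\leq C(n)(t_0-t)^{-n/2}\leq C(n)\rho_0^{-n}\leq C(n)$, so $\Phi(u,X_0,\rho_0)\leq C\int_{P_2^-(X_0)}|\na u|^2=C\,2^n E_-(u,X_0,2)\leq C\,E_-(u,X_0,2)$. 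Combining with Step 1 yields $E(u,X_0,\delta)\leq C(E_-(u,X_0,2)+\delta)$; rescaling back multiplies the $\delta$ term by $r$ and leaves the first term scale-invariant, giving \eqref{EuX0Bound} — in fact with a slightly better dependence than stated, which is harmless. (If one is more careful in Step 1 and only gains a factor $\delta^{-n}$ from the kernel lower bound rather than an absolute constant, the stated $\delta^{-n}$ prefactor on $E_-(u,X_0,2r)$ appears; either way the inequality as written holds.)

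\emph{Main obstacle.} The delicate part is Step 1, matching up the forward/symmetric cylinder $P_{\delta r}(X_0)$ underlying $E$ with the backward Gaussian density $\Phi$: $\Phi$ sees only times $t<t_0$, so one must first trade the forward half $B_{\delta r}(x_0)\times[t_0,t_0+\delta^2r^2]$ for backward data via the energy inequality, and then choose the comparison scale in $\Phi$ (something like $C\delta r$ with $C$ absolute) so that the annular region $T_\rho(X_0)$ genuinely contains a definite fraction of $P_{\delta r}^-(X_0)$ on which $G_{X_0}\gtrsim(\delta r)^{-n}$ and $\vp_{x_0}\equiv1$. Keeping track of the constants $C(n)$ through these two changes of region — and fixing $\delta_0$ small enough that all invoked scales stay below $2$ so Proposition \ref{Monotonicity} and Lemma \ref{LocalEnergy} apply — is where the real bookkeeping lies; the monotonicity input in Step 2 is then immediate.
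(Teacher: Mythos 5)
Your Step 1 contains the essential gap, and it is precisely the point the paper's proof is built to circumvent. First, the claimed lower bound $G_{X_0}\geq c(n)\delta^{-n}$ on $P_{\delta}^-(X_0)$ is false: for $(x,t)$ with $|x-x_0|$ of order $\delta$ and $t_0-t\ll\delta^2$, the exponent $|x-x_0|^2/(4(t_0-t))$ is unbounded, so the backward kernel centered at $X_0$ itself degenerates as $t\to t_0^-$. Relatedly, no single region $T_\rho(X_0)$ contains $P_\delta^-(X_0)$, since $T_\rho(X_0)$ by definition excludes all times in $(t_0-\rho^2,t_0)$; hence the inequality $E_-(u,X_0,\delta)\leq C\,\Phi(u,X_0,C\delta)$ does not follow and is not true in general. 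Second, your plan to ``trade the forward half for backward data'' via the localized energy inequality does not close: \eqref{LocalEnergyEq} bounds the slice energy at a time $t>t_0$ by the slice energy at an earlier time \emph{plus} the error term $4\int_s^t\int|\na u|^2|\na\phi|^2$, which itself involves forward-in-time energy at a comparable or larger spatial scale, so one does not obtain $E(u,X_0,\delta r)\leq C\,E_-(u,X_0,C\delta r)$ with constants independent of $\delta$ by this route. The paper resolves both difficulties in one stroke by \emph{shifting the center of the monotone density forward in time}: it works with $\Phi(u,(x_0,t_0+2\delta^2r^2),\cdot)$, for which the entire cylinder $P_{\delta r}(X_0)$ lies in the past of the center at temporal distance at least $\delta^2r^2$, so that $G_{(x_0,t_0+2\delta^2r^2)}\geq c(n)(\delta r)^{-n}$ on all of $P_{\delta r}(X_0)$. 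The monotonicity formula applied at the shifted center then carries the bound from scale $\delta r$ up to scale $2\delta r$, where the integral splits into a piece supported in $P_r(X_0)$, bounded by $C\delta^{-n}E_-(u,X_0,r)$ (this is where the $\delta^{-n}$ genuinely originates), and an exponentially small tail controlled by Corollary \ref{LocalImprovedEstimates}.

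Two further symptoms of the problem. Your Step 2 would, if Step 1 held, produce the estimate with no $\delta^{-n}$ at all, which is itself a warning sign; moreover the containment $T_{\rho_0}(X_0)\subset P_2^-(X_0)$ fails for $\rho_0\in(1,2)$, since $T_{\rho_0}(X_0)$ reaches back to time $t_0-4\rho_0^2<t_0-4$ (one must take $\rho_0=1$). Finally, the initial rescaling to $r=1$ requires justification that you omit: $\Phi$ and Proposition \ref{Monotonicity} are not scale-invariant because the cutoff $\vp$ lives at unit scale, and the rescaled map no longer lies in $H_\Lda(P_8,\cN)$ without invoking a uniform density bound such as Lemma \ref{UniformBound}. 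The paper avoids this by working at the original scale $r$ and applying the monotonicity formula only between the scales $\delta r$ and $2\delta r$.
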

\begin{proof}
Without loss of generality, we may assume $ X_0= 0^{n,1} $. If $ \delta\in(0,\f{1}{4})$ and $X=(x,t)\in P_{\delta r} $, we have $
G_{(0^n,2\delta^2r^2)}(x,t)\geq c(n)(\delta r)^{-n} $. Then, Proposition \ref{Monotonicity} implies 
\be
E(u,0^{n,1},\delta r)\leq C(n) \Phi(u,(0^n,2\delta^2r^2),\delta r)\leq C(n)\Big[\Phi(u,(0^n,2\delta^2r^2),2\delta r)+C(\Lda,n)\delta r\Big]\label{NablauLess}
\ee
for any $ r\in(0,1) $. On the other hand,
\[
\begin{aligned}
\Phi(u,(0^n,2\delta^2r^2),2\delta r)&\leq C \int_{B_1\times[-14\delta^2r^2,-2\delta^2r^2]\cap P_r}|\na u|^2G_{(0^n,2\delta^2r^2)}\ud x\ud t\\
&\quad+C\int_{B_1\times[-14\delta^2r^2,-2\delta^2r^2]\backslash P_r}|\na u|^2G_{(0^n,2\delta^2r^2)}\ud x\ud t\\
&\leq C(\Lda,n)\left[\delta^{-n}E_-(u,0^{n,1},r)+\delta^{-n}\exp\(-\f{1}{56\delta^2}\)\int_{-14\delta^2r^2}^{-2\delta^2r^2}\int_{B_1}|\na u|^2\ud x\ud t\right],\label{Phiu00}
\end{aligned}
\]
where the second inequality follows from the property
\[
G_{(0^n,2\delta^2r^2)}(x,t)\leq C(n)\delta^{-n}\exp\(-\f{1}{56\delta^2}\)
\]
for any $ (x,t)\in\R^n\times[-14\delta^2r^2,-2\delta^2r^2]\backslash P_r $. Provided $ \delta\in(0,\delta_0) $ with $ \delta_0= \delta_0(n)>0 $ being sufficiently small, Corollary \ref{LocalImprovedEstimates} together with \eqref{NablauLess} produces \eqref{EuX0Bound}.
\end{proof}

\subsection{Partial regularity results}

In this subsection, we review the partial regularity theory for suitable solutions of harmonic map flows.

\begin{prop}\label{PartialRegularity}
Assume $ r\in(0,1), X_0\in P_4 $ and $ u:P_8\to\cN $ is a suitable solution of \eqref{heatflow}. There exist $ \va>0 $, depending only on $ n $ and $ \cN $ such that if $ E(u,X_0,2r)\leq\va^2 $, then $ u\in C^{\ift}(P_r(X_0),\cN) $, and  
\[
\sup_{P_r(X_0)}(r^2|\pa_tu|+r|\na u|)\leq C(\va),
\]
where $ C(\va)>0 $ depends only on $ \va,n $, and $ \cN $ such that $ \lim_{\va\to 0^+}C(\va)=0 $.
\end{prop}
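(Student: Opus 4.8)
The plan is to reduce, by parabolic rescaling, to the normalized situation $r=1$, $X_0=0^{n,1}$, and then to read off both the smoothness and the gradient bound directly from the local small-energy regularity theorem \eqref{SmallRegularity} of Feldman and Chen--Li--Lin (generalized by Liu) \cite{Fel94,CLL95,Liu03}, applied at a fixed interior scale and combined with an elementary covering. Concretely, the dilation $(x,t)\mapsto(x_0+rx,t_0+r^2t)$ preserves the class of suitable solutions of \eqref{heatflow}, sends $P_r(X_0)$ to $P_1$ and $P_8$ to $P_{8/r}\supset P_8$, and leaves $E(u,\cdot,\cdot)$, $r|\na u|$ and $r^2|\pa_tu|$ invariant; so it suffices to treat $r=1$, $X_0=0^{n,1}$, where the hypothesis reads $\int_{P_2}|\na u|^2\le\va^2$, and we must produce $u\in C^{\ift}(P_1,\cN)$ with $\sup_{P_1}(|\na u|+|\pa_tu|)\le C(\va)$, $C(\va)\to0$.

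For every $X\in\ol{P_1}$ one checks the trivial inclusion $P_1(X)\subset P_2(0^{n,1})$, hence
\begin{equation*}
\(\tfrac{1}{2}\)^{-n}\int_{P_1(X)}|\na u|^2\le 2^n\int_{P_2}|\na u|^2\le 4^n\va^2 .
\end{equation*}
Fixing the threshold $\va=\va(n,\cN)$ small enough that $4^n\va^2$ lies below the constant in \eqref{SmallRegularity}, that theorem gives $u\in C^{\ift}(P_{1/2}(X),\cN)$, and the Moser-iteration/bootstrap proofs underlying \cite{Fel94,CLL95,Liu03} in fact deliver the accompanying quantitative estimate
\begin{equation*}
\sup_{P_{1/2}(X)}\(\tfrac{1}{2}|\na u|+\tfrac{1}{4}|\pa_tu|\)\le C_0(n,\cN)\(\(\tfrac{1}{2}\)^{-n}\int_{P_1(X)}|\na u|^2\)^{1/2}\le C_0(n,\cN)\,2^n\,\va .
\end{equation*}
Evaluating at the centre $X$ and taking the supremum over $X\in\ol{P_1}$ yields $u\in C^{\ift}(P_1,\cN)$ and $\sup_{P_1}(|\na u|+|\pa_tu|)\le C_1(n,\cN)\,\va=:C(\va)$, which already tends to $0$.

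If one wishes to rely only on the qualitative form of \eqref{SmallRegularity} (smoothness plus a bound $C(n,\cN)$ with no explicit $\va$-gain), the vanishing $C(\va)\to0$ follows by a compactness argument: given suitable solutions $u_i$ on $P_8$ with $\int_{P_2}|\na u_i|^2\to0$ but $\sup_{P_1}(|\na u_i|+|\pa_tu_i|)\ge c_0>0$, the previous paragraph furnishes uniform $C^1$ bounds on a fixed neighbourhood of $\ol{P_1}$; parabolic Schauder estimates for \eqref{heatflow} (whose right-hand side $A(u_i)(\na u_i,\na u_i)$ is then uniformly H\"older bounded) upgrade these to uniform $C^{\ift}_{\loc}$ bounds, so along a subsequence $u_i\to u_{\ift}$ in $C^{\ift}_{\loc}(P_1)$ with $\int_{P_1}|\na u_{\ift}|^2=0$; thus $u_{\ift}$ is constant and $\na u_i,\pa_tu_i\to0$ uniformly on $\ol{P_1}$, contradicting $c_0>0$. (This is also the place where Proposition \ref{Compactness} can be substituted for the ad hoc convergence.)

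I expect the one genuine obstacle to be a bookkeeping one: the constant $C(\va)$ is required to depend on $\va,n,\cN$ alone and \emph{not} on the total energy $\|\na u\|_{L^2(P_8)}$. This forces the argument to be channelled through the local estimate \eqref{SmallRegularity}, whose constants are purely dimensional, rather than through the monotonicity formula (Proposition \ref{Monotonicity}) or Lemma \ref{ComparisonEandPhi}, whose constants $C_1,C_2,C$ depend on $\Lda$; the compactness step is likewise entirely local, so no global bound intrudes. Apart from this, the proof is standard $\va$-regularity.
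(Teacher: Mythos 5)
Your proposal is essentially correct and rests on the same foundational input as the paper, namely Liu's small-energy regularity theorem for suitable solutions into general targets; your covering of $\ol{P_1}$ by interior cylinders $P_{1/2}(X)\subset P_2$ together with the compactness fallback is what the paper compresses into ``standard iteration techniques.'' The one visible organizational difference is that the paper's one-line proof also cites Lemma \ref{ComparisonEandPhi}, a comparison of $E$ at a small scale to $E_-$ at a larger scale whose constant $C$ depends on $\Lda$; your route deliberately avoids that lemma and works entirely through the parabolic-ball energy $E$, which is the cleaner way to keep the constants depending only on $n$ and $\cN$ as the proposition requires. The bookkeeping concern you raise at the end — that the argument must not pass through the monotonicity formula or Lemma \ref{ComparisonEandPhi} because their constants carry $\Lda$ — is precisely the point, and you handle it correctly by channeling everything through the local $\va$-regularity estimate (whose constants are purely dimensional) and a local compactness step. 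A minor slip: after rescaling the hypothesis reads $2^{-n}\int_{P_2}|\na u|^2\le\va^2$, i.e. $\int_{P_2}|\na u|^2\le 2^n\va^2$, not $\le\va^2$ as written; your next display's factor $4^n\va^2$ is consistent with the corrected bound, so nothing downstream is affected.
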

\begin{proof}
This follows from Lemma \ref{ComparisonEandPhi}, \cite[Lemma 5.1]{Liu03}, and iteration techniques.
\end{proof}

As a direct consequence of Lemma \ref{ComparisonEandPhi} and Proposition \ref{PartialRegularity}, we can obtain the following.
\begin{cor}\label{PartialRegularityCor}
Assume $ X_0\in P_4$ and $ u\in H_{\Lda}(P_8,\cN) $. There exist $ \va,\delta>0 $ depending only on $ \Lda,n,\cN $ such that if $ E_-(u,X_0,2r)\leq\va^2 $, then $ u\in C^{\ift}(P_{\delta r}(X_0),\cN) $ and
\[
\sup_{P_{\delta r}(X_0)}((\delta r)^2|\pa_tu|+(\delta r)|\na u|)\leq C(\va),
\]
where $ C(\va)>0 $ depends only on $ \va,n $, and $ \cN $ such that $ \lim_{\va\to 0^+}C(\va)=0 $.
\end{cor}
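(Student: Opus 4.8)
The plan is to deduce this from Proposition~\ref{PartialRegularity} by using Lemma~\ref{ComparisonEandPhi} to trade smallness of the backward energy density $E_-(u,X_0,2r)$ for smallness of the full energy density $E(u,X_0,\cdot)$ at a proportionally smaller scale. Assume $r\in(0,1)$, as in the cited results, and $E_-(u,X_0,2r)\leq\va^2$. Applying Lemma~\ref{ComparisonEandPhi} with its dilation constant set to $2\delta$ (admissible once $2\delta<\delta_0(n)$) gives
$$
E(u,X_0,2\delta r)\leq C_3\big((2\delta)^{-n}E_-(u,X_0,2r)+2\delta r\big)\leq C_3\big((2\delta)^{-n}\va^2+2\delta r\big),\qquad C_3=C_3(\Lda,n).
$$
Since Proposition~\ref{PartialRegularity} applied at radius $\delta r$ requires precisely control of $E(u,X_0,2\delta r)$ below its threshold, it remains to choose $\delta$ and $\va$ so that the right-hand side is small.

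The step I would carry out is to fix $\delta$ as a small power of $\va$, say $\delta=\f{1}{2}\va^{2/(n+2)}$, so that $(2\delta)^{-n}\va^2=\va^{4/(n+2)}$ while $2\delta r\leq\va^{2/(n+2)}$, and therefore
$$
E(u,X_0,2\delta r)\leq 2C_3\,\va^{2/(n+2)}=:\sigma(\va)^2.
$$
Choosing $\va=\va(\Lda,n,\cN)$ small guarantees simultaneously that $2\delta<\delta_0(n)$, that $P_{2\delta r}(X_0)$ stays inside the region where the relevant densities are defined, and that $\sigma(\va)$ stays below the (absolute) threshold of Proposition~\ref{PartialRegularity}. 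That proposition, invoked at radius $\delta r$ with threshold $\sigma(\va)$, then yields $u\in C^\infty(P_{\delta r}(X_0),\cN)$ together with
$$
\sup_{P_{\delta r}(X_0)}\big((\delta r)^2|\pa_tu|+(\delta r)|\na u|\big)\leq C(\sigma(\va))=:C(\va).
$$
As $\sigma(\va)\to0^+$ when $\va\to0^+$ and $C(\cdot)$ has the vanishing property recorded in Proposition~\ref{PartialRegularity}, we obtain $\lim_{\va\to0^+}C(\va)=0$; and $\delta$ depends only on $\va$, hence only on $\Lda,n,\cN$, which is the asserted dependence.

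The constant-chasing, together with checking that the various parabolic-ball inclusions hold once $\delta$ is small, is routine. The one point that requires care is the additive, scale-noninvariant term $+2\delta r$ coming from Lemma~\ref{ComparisonEandPhi} (ultimately from the error $C_1(R-r)$ in the monotonicity formula, Proposition~\ref{Monotonicity}): with $\delta$ fixed independently of $\va$ this term would stay bounded away from $0$, destroying the sharp decay $C(\va)\to0$. Letting $\delta$ shrink with $\va$ at the polynomial rate above is exactly what lets the decay survive the passage from $E_-$ to $E$, and it is the main obstacle in the argument.
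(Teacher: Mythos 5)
Your proposal is correct and follows exactly the paper's route: the paper's proof is simply the one-line remark that the corollary is "a direct consequence of Lemma \ref{ComparisonEandPhi} and Proposition \ref{PartialRegularity}". The one genuinely nonobvious point you filled in — that the additive error $+\,\delta r$ in Lemma \ref{ComparisonEandPhi} forces $\delta$ to shrink with $\va$ (your choice $\delta\sim\va^{2/(n+2)}$, or any comparable power, works) if one wants $C(\va)\to 0$ rather than merely smoothness below a fixed threshold — is a real subtlety the paper glosses over, and your handling of it is right.
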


\begin{defn}[Regularity scale]\label{RegularScale}
For a suitable solution $ u:P_8\to\cN $ of \eqref{heatflow}, we define the regularity scale at a point $ X=(x,t)\in P_4 $ as
\[
r_u(X):=\sup\left\{r\in[0,1]:\sup _{P_r(X)}(r^2|\pa_tu|+r|\na u|)\leq 1\right\}.
\]
Here, we use the convention that $r_u(X)= 0$ if $X$ is a singular point.
\end{defn}

Using Corollary \ref{PartialRegularityCor}, we can derive the following estimate on regularity scales.

\begin{lem}\label{Regularscaleresult}
Given $ X_0\in P_4 $ and $ u\in H_{\Lda}(P_8,\cN) $. There exist $ \va,\delta>0 $, depending only on $ \Lda,n $, and $ \cN $ such that if $ E_-(u,X_0,2r)\leq\va^2 $, then $ r_u(X_0)\geq\delta r $.
\end{lem}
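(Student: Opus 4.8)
The plan is to deduce this directly from Corollary \ref{PartialRegularityCor}, which already does the heavy lifting: the hypothesis $E_-(u,X_0,2r)\leq\va^2$ (with $\va$ chosen as in that corollary) yields $u\in C^\infty(P_{\delta r}(X_0),\cN)$ together with the pointwise bound
\[
\sup_{P_{\delta r}(X_0)}\big((\delta r)^2|\pa_tu|+(\delta r)|\na u|\big)\leq C(\va).
\]
The only gap between this and the conclusion $r_u(X_0)\geq\va r$ is that the definition of the regularity scale (Definition \ref{RegularScale}) normalizes the bound to be $\leq 1$, whereas the corollary only gives $\leq C(\va)$, and that the radius must be compared to $\va r$ rather than $\delta r$.

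First I would invoke the corollary to fix $\va_0=\va_0(\Lda,n,\cN)>0$ and $\delta=\delta(\Lda,n,\cN)\in(0,1)$ so that $E_-(u,X_0,2r)\leq\va_0^2$ forces smoothness on $P_{\delta r}(X_0)$ with the stated estimate and, crucially, $\lim_{\va\to0^+}C(\va)=0$. The key step is a shrinking argument: for any $\rho\leq\delta r$ one has $P_\rho(X_0)\subset P_{\delta r}(X_0)$, so on $P_\rho(X_0)$,
\[
\sup_{P_\rho(X_0)}\big(\rho^2|\pa_tu|+\rho|\na u|\big)
\leq \Big(\tfrac{\rho}{\delta r}\Big)\sup_{P_{\delta r}(X_0)}\big((\delta r)|\na u|\big)+\Big(\tfrac{\rho}{\delta r}\Big)^2\sup_{P_{\delta r}(X_0)}\big((\delta r)^2|\pa_tu|\big)\leq \Big(\tfrac{\rho}{\delta r}\Big)C(\va_0),
\]
using $\rho/\delta r\leq 1$ to bound the square by the first power. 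Hence if $\rho\leq \delta r/C(\va_0)$ (and $\rho\leq\delta r$), the right side is $\leq 1$, so $\rho\leq r_u(X_0)$ by definition of $r_u$; choosing $\rho=\min\{\delta,\delta/C(\va_0)\}\,r$ gives $r_u(X_0)\geq c(\Lda,n,\cN)\,r$ for an explicit constant.

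Finally, to get the clean statement $r_u(X_0)\geq\va r$ with a single constant $\va$, I would simply shrink: set $\va:=\min\{\va_0,\;\delta,\;\delta/C(\va_0)\}$ (or, if one insists on the literal form, note one may always decrease $\va$ on both sides of the implication, since a smaller threshold $\va$ on the hypothesis and a smaller lower bound $\va r$ on the conclusion only weakens both). One must also observe the harmless normalization issue that $r_u(X_0)$ is capped at $1$ in its definition, but since $r\leq 1$ and the produced lower bound is $\leq \delta r\leq r\leq 1$, this cap is not active. The main (and only) obstacle is purely bookkeeping — tracking that all constants depend solely on $\Lda,n,\cN$ and handling the mixed-order pointwise bound in the rescaling — there is no substantive analytic difficulty beyond what Corollary \ref{PartialRegularityCor} already provides.
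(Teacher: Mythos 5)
Your proof is correct and elaborates the approach the paper only gestures at (the paper simply says the lemma follows from Corollary \ref{PartialRegularityCor} and gives no further argument). Two small remarks: your displayed inequality drops a factor of $2$, since after using $\rho/(\delta r)\le 1$ both summands are bounded by $\big(\tfrac{\rho}{\delta r}\big)C(\va_0)$, so the right-hand side should be $2\big(\tfrac{\rho}{\delta r}\big)C(\va_0)$ (harmless — just take $\rho\le\delta r/(2C(\va_0))$); and the shrinking step can be bypassed entirely by invoking $\lim_{\va\to0^+}C(\va)=0$ from Corollary \ref{PartialRegularityCor} to pick $\va_0$ with $C(\va_0)\le 1$, which directly yields $r_u(X_0)\ge\delta r$ before you take $\va:=\min\{\va_0,\delta\}$.
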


\subsection{Compactness and blow-ups} In this subsection, we review the compactness properties of suitable solutions of \eqref{heatflow} and apply these results to conduct a blow-up analysis.

\begin{lem}\label{UniformBound}
Let $ \Lda>0 $, $ r>0 $ and  $ X_0=(x_0,t_0)\in\R^n\times\R $. Suppose that $ u\in H_{\Lda}(P_{4r}(X_0),\cN) $. Then there exists a constant $ C>0 $, depending only on $ \Lda $ and $ n $ such that
\[
\sup_{\rho\leq r,\,\,X\in P_{2r}(X_0)}\(\rho^{-n}\int_{P_{\rho}(X)}|\na u|^2+\rho^{2-n}\int_{P_{\rho}(X)}|\pa_tu|^2\)\leq C.
\]
\end{lem}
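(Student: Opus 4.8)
The plan is to deduce both bounds from the almost‑monotonicity of the Gaussian‑weighted density $\Phi$ (Proposition \ref{Monotonicity}), which is exactly the mechanism that keeps the estimate uniform across scales. First I would translate and parabolically rescale so that $u$ becomes a suitable solution on a fixed, sufficiently large parabolic cylinder, on which the monotonicity formula is applicable at every center and scale appearing below; the task then becomes to bound $\sup\big(E(u,X,\rho)+\rho^{2-n}\int_{P_\rho(X)}|\pa_tu|^2\big)$, the supremum being over a compact range of centers $X$ and scales $\rho$, with the energy controlled in terms of $\Lda$ and $n$.

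The heart of the argument is a uniform bound on $\Phi$ itself. For an admissible center $X$ and any scale $\sigma\in(0,1]$, Proposition \ref{Monotonicity} applied on the interval $[\sigma,1]$ gives $\Phi(u,X,\sigma)\le e^{C_1}\Phi(u,X,1)+C_1$, while $\Phi(u,X,1)$ is estimated trivially: on $T_1(X)$ one has $t_X-t\in[1,4]$, hence $G_X\le C(n)$ and $\vp_x^2\le 1$ there, so $\Phi(u,X,1)\le C(n)\int|\na u|^2\le C(\Lda,n)$. This yields $\Phi(u,X,\sigma)\le C(\Lda,n)$ uniformly. To return to $E(u,X,\rho)=\rho^{-n}\int_{P_\rho(X)}|\na u|^2$ I would use the geometric device already present in the proof of Lemma \ref{ComparisonEandPhi}: put $\tilde X=(x,t+2\rho^2)$; then for $(y,s)\in P_\rho(X)$ one has $\tilde t-s\in(\rho^2,3\rho^2)$, so $G_{\tilde X}\ge c(n)\rho^{-n}$ there, while $P_\rho(X)\subset T_\rho(\tilde X)$ and $\vp_{\tilde x}\equiv 1$ on $B_\rho(x)$. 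Hence $E(u,X,\rho)\le c(n)^{-1}\int_{P_\rho(X)}|\na u|^2 G_{\tilde X}\le 2c(n)^{-1}\Phi(u,\tilde X,\rho)\le C(\Lda,n)$ for $\rho\le 1$, and for the remaining bounded range of $\rho$ the bound $E(u,X,\rho)\le C(\Lda,n)$ follows at once from the energy bound. Finally, Corollary \ref{LocalImprovedEstimates} applied on $P_{2\rho}(X)$ converts this into $\rho^{2-n}\int_{P_\rho(X)}|\pa_tu|^2\le C(n)\rho^{-n}\int_{P_{2\rho}(X)}|\na u|^2=C(n)2^n E(u,X,2\rho)\le C(\Lda,n)$, and undoing the rescaling concludes the proof.

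I expect the second paragraph — the passage through the monotonicity formula — to be the main point. The lossy comparison of Lemma \ref{ComparisonEandPhi} alone only gives $E(u,X,\delta\rho)\lesssim \delta^{-n}E(u,X,2\rho)+\delta\rho$, and iterating it across dyadic scales produces a multiplicative constant that grows with the number of scales, so it cannot by itself yield a scale‑independent bound; it is the \emph{exponential} form of the error in Proposition \ref{Monotonicity} that makes the estimate uniform over all small $\rho$. The only other point requiring care is purely bookkeeping: ensuring that every cylinder $T_\rho(\tilde X)$ and every shifted center $\tilde X$ remains inside the rescaled domain where the monotonicity formula holds, which is arranged by taking that domain large enough in the reduction step.
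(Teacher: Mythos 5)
Your proposal is correct and follows essentially the same route as the paper, whose proof simply cites Proposition \ref{Monotonicity} and Corollary \ref{LocalImprovedEstimates} (deferring details to Liu's Lemma 2.2); in particular, your shifted-center comparison $E(u,X,\rho)\lesssim\Phi(u,(x,t+2\rho^2),\rho)$ is exactly the device already used in \eqref{NablauLess}. The only caveat is that the hypothesis $\|\na u\|_{L^2(P_{4r}(X_0))}\le\Lda$ is not scale-invariant, so the constant produced by your (and the paper's) argument really depends on the normalization of $r$; this is inherited from the statement itself and is harmless, since the lemma is only invoked with $r$ of unit order.
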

\begin{proof}
It follows directly from Corollary \ref{LocalImprovedEstimates} and Proposition \ref{Monotonicity}. One can also refer to \cite[Lemma 2.2]{Liu03} for more details.
\end{proof}

Next, we present a compactness result for a sequence of suitable solutions of harmonic map flows.

\begin{prop}\label{Compactness}
Assume that $ \Lda>0 $ and $ \{u_i\}\subset H_{\Lda}(P_4,\cN) $. There exist $ u\in H^1(P_4,\cN) $ and Radon measures $ \nu,\eta\in\M(P_4) $ such that up to a subsequence,
\begin{gather*}
u_i\wc u\text{ weakly in }H_{\loc}^1(P_1,\cN),\\
|\pa_tu_i(x,t)|^2\ud x\ud t\wc^*|\pa_tu(x,t)|^2\ud x\ud t+\eta\text{ in }\M(P_1),\\
\f{1}{2}|\na u_i(x,t)|^2\ud x\ud t\wc^*\mu:=\f{1}{2}|\na u(x,t)|^2\ud x\ud t+\nu\text{ in }\M(P_1).
\end{gather*}
 Define the concentration set
\[
\Sg:=\bigcap_{r>0}\left\{X\in P_1:\liminf_{i\to+\ift}E(u_i,X,r)\geq\va^2\right\},
\]
where $ \va>0 $ is the constant given by Proposition \ref{PartialRegularity}. Then the following properties hold.
\begin{enumerate}[label=$(\theenumi)$]
\item $ \Sg $ is relatively closed in $ P_1 $, and for any $ R\in(0,1) $, we have $ \cP^n(\Sg\cap P_R)\leq C $, where $ C>0 $ depends only on $ \Lda,n,\cN $, and $ R $. 
\item $ u\in C^{\ift}(P_1\backslash\Sg,\cN) $, and 
\[
u_i\to u\text{ strongly in }H_{\loc}^1\cap C_{\loc}^{\ift}(P_1\backslash\Sg,\cN).
\]
Additionally, $ u $ is a weak solution of \eqref{heatflow}.
\item $ \sing(u)\cup\supp(\nu)=\Sg $ and $ \supp\eta \subset\Sg $.
\item For any $ R\in(0,1) $, there exists $ C>0 $, depending only on $ \Lda,n,\cN $, and $ R $ such that 
\[
\HH^{n-2}(\Sg\cap(B_R\times\{t\}))\leq C\quad\text{for any }t\in\(-\f{1}{2},\f{1}{2}\).
\]
\item If the target manifold $ \cN $ does not admit harmonic $ 2 $-spheres, then $ \cP^n(\Sg)=0 $. In particular, $ u_i\to u $ strongly in $ H^1(P_1,\cN) $.
\end{enumerate}
\end{prop}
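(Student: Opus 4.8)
The plan is to run the classical ``$\va$-regularity plus covering'' machinery for energy-concentration limits, adapted to the parabolic densities of this paper; items $(1)$--$(3)$ come out together, $(4)$ from a slice-adapted covering, and $(5)$ from a bubbling argument. \textbf{Extraction.} Since $u_i\in H_{\Lda}(P_1,\cN)$ gives $\|\na u_i\|_{L^2(P_1)}\le\Lda$ and Corollary \ref{LocalImprovedEstimates} (together with Lemma \ref{UniformBound}) gives uniform $L^2_{\loc}$-bounds for $\pa_tu_i$, a subsequence satisfies $u_i\wc u$ in $H^1_{\loc}(P_1,\R^d)$, $u_i\to u$ a.e., $\tfrac12|\na u_i|^2\,\ud x\,\ud t\wc^*\mu$ and $|\pa_tu_i|^2\,\ud x\,\ud t\wc^*\lda$ in $\M(P_1)$; since $\cN$ is closed, $u(x,t)\in\cN$ a.e. Localized weak lower semicontinuity of the $L^2$-norm gives $\mu\ge\tfrac12|\na u|^2\,\ud x\,\ud t$ and $\lda\ge|\pa_tu|^2\,\ud x\,\ud t$, so $\nu:=\mu-\tfrac12|\na u|^2\,\ud x\,\ud t$ and $\eta:=\lda-|\pa_tu|^2\,\ud x\,\ud t$ are non-negative Radon measures.

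\textbf{$\va$-regularity (items $(2)$, $(3)$).} Let $\va$ be from Proposition \ref{PartialRegularity}. If $X_0\notin\Sg$, then along a subsequence $E(u_i,X_0,\rho)<\va^2$ for some $\rho>0$, so Proposition \ref{PartialRegularity} gives $u_i\in C^\ift(P_{\rho/2}(X_0),\cN)$ with a uniform bound on $\rho^2|\pa_tu_i|+\rho|\na u_i|$; parabolic Schauder bootstrapping promotes this to uniform $C^k_{\loc}$-bounds, so a further subsequence converges in $C^\ift_{\loc}(P_{\rho/2}(X_0),\R^d)$ to $u$, which is therefore smooth there, solves \eqref{heatflow} classically, and carries no energy defect, whence $\nu,\eta$ vanish near $X_0$ and a full parabolic neighbourhood of $X_0$ avoids $\Sg$. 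This shows $\Sg$ is relatively closed, $u_i\to u$ in $H^1_{\loc}\cap C^\ift_{\loc}(P_1\bs\Sg,\R^d)$, $u\in C^\ift(P_1\bs\Sg,\cN)$, and $\sing(u)\cup\supp\nu\cup\supp\eta\subset\Sg$. Conversely, if $X_0\notin\sing(u)\cup\supp\nu$ then $u$ is smooth near $X_0$ and $\mu=\tfrac12|\na u|^2\,\ud x\,\ud t$ there, so the portmanteau theorem gives $\limsup_iE(u_i,X_0,\rho)\le2^nE(u,X_0,2\rho)\to0$ as $\rho\to0^+$, hence $X_0\notin\Sg$; this is $(3)$. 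That $u$ is a weak solution \emph{across} $\Sg$ is the Chen--Struwe argument \cite{CS89}: project the equation for $u_i$ onto $T_{u_i}\cN$, pass to the limit using the strong $L^2_{\loc}$-convergence of $u_i$ and the weak $L^2_{\loc}$-convergence of $\na u_i,\pa_tu_i$, and use $A(u)(\na u,\na u)\perp\cN$ so that the defect measure is transparent to the limiting equation; with the next step this gives $(2)$.

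\textbf{Covering arguments (items $(1)$, $(4)$).} For $(1)$: if $X\in\Sg$ then $\liminf_iE(u_i,X,\rho)\ge\va^2$ for every $\rho$, so $\mu(\ol{P_\rho(X)})\ge\tfrac{\va^2}{2}\rho^n$, an $n$-dimensional density lower bound along $\Sg$; a parabolic Vitali covering of $\Sg\cap P_R$ ($R\in(0,1)$) by balls of radius $\le\delta$ with pairwise-disjoint fifth-balls, together with $\mu(\ol{P_{2R}})\le\tfrac12\Lda^2$ (take $2R<1$, or enlarge harmlessly), gives $\sum_j(2r_j)^n\le C(n)\mu(\ol{P_{2R}})$, and $\delta\to0^+$ yields $\cP^n(\Sg\cap P_R)\le C(\Lda,n,R)$. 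For $(4)$, fix $t_0\in(-\tfrac12,\tfrac12)$; the crucial point is that the \emph{slice} density $\Psi(u,(x_0,t_0),\cdot)$ of \eqref{EnergyFunctional2} satisfies the almost-monotonicity of Proposition \ref{Monotonicity} at \emph{every} space-time center, carries the codimension-two normalization $\Psi(u,(x_0,t_0),\sigma)\sim\sigma^{2-n}\int_{B_{C\sigma}(x_0)}|\na u(\cdot,t_0-\sigma^2)|^2$, and is proportional to the space-time density through $\Phi(u,(x_0,t_0),\rho)=2\int_\rho^{2\rho}\Psi(u,(x_0,t_0),\sigma)\,\tfrac{\ud\sigma}{\sigma}$, so $\lim_{\sigma\to0^+}\Psi=\tfrac{1}{2\ln2}\lim_{\rho\to0^+}\Phi$. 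Combining this with Lemma \ref{ComparisonEandPhi}, Proposition \ref{PartialRegularity} and Corollary \ref{PartialRegularityCor}, one shows $\lim_{\sigma\to0^+}\Psi(u,(x_0,t_0),\sigma)\ge\va_1^2>0$ along $\Sg\cap\{t_0\}$ (with the corresponding bound persisting for $u_i$ at small scales), and monotonicity then gives $\int_{B_{C\sigma}(x_0)}|\na u_i(\cdot,t_0-\sigma^2)|^2\ge c\va_1^2\sigma^{n-2}$ for all small $\sigma$ and $x_0\in\Sg\cap\{t_0\}$; a spatial maximal-separation covering of $\Sg\cap\{t_0\}\cap B_R$ at scale $\sigma$ (choosing $t_0-\sigma^2$ among the a.e.\ times where the uniform slice bound of Corollary \ref{LocalImprovedEstimates} holds) produces at most $C(\Lda,n)/(\va_1^2\sigma^{n-2})$ balls, hence $\HH_{2\sigma}^{n-2}(\Sg\cap\{t_0\}\cap B_R)\le C(\Lda,n)/\va_1^2$, uniformly in $t_0$; letting $\sigma\to0^+$ gives $(4)$.

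\textbf{Item $(5)$.} Assume $\cN$ admits no harmonic $2$-sphere and suppose $\nu\not\equiv0$. Using the locally finite $\cP^n$-density from $(1)$, pick $X_0\in\supp\nu$ of positive upper $n$-density for $\nu$; blowing up (diagonally in $i$ and in the scales) produces a backwardly self-similar limit with non-trivial defect measure, from which the standard bubbling/energy-quantization analysis for harmonic map flows extracts a non-constant finite-energy harmonic map $\R^2\to\cN$, upgraded by removable singularities to a harmonic $2$-sphere --- a contradiction. Hence $\nu\equiv0$, so $\Sg=\sing(u)$ by $(3)$ and $\cP^n(\Sg)=\cP^n(\sing(u))=0$ by \eqref{zeroParabolic}; then $\tfrac12|\na u_i|^2\,\ud x\,\ud t\wc^*\tfrac12|\na u|^2\,\ud x\,\ud t$ upgrades the weak $H^1$-convergence to $\|\na u_i\|_{L^2(P_1)}\to\|\na u\|_{L^2(P_1)}$, hence to strong $L^2(P_1)$-convergence of $\na u_i$, and a parallel argument (or \eqref{LocalizedEnergy}) forces $\eta\equiv0$ and strong $L^2(P_1)$-convergence of $\pa_tu_i$, i.e.\ $u_i\to u$ strongly in $H^1(P_1,\R^d)$. \textbf{Main obstacle.} The hard parts will be item $(5)$ and the ``every $t$'' feature of $(4)$: the vanishing of the defect measure rests on the full bubbling analysis of Chen--Struwe and Lin--Wang, and the uniform slice estimate hinges on $\Psi$ being almost-monotone at \emph{every} center (rather than a.e.\ time), which is precisely what forces us through the somewhat delicate slice-density bookkeeping above.
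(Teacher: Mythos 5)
Your proof sketch reconstructs exactly what the paper's cited references do: the paper's own proof simply defers items $(1)$--$(3)$ to Chen--Struwe \cite{CS89}, item $(4)$ to Cheng \cite{Che91}, and item $(5)$ to Lin--Wang \cite{LW99}, and your $\va$-regularity/closedness argument, parabolic Vitali covering, slice-density covering via the identity $\Phi(u,X_0,\rho)=2\int_\rho^{2\rho}\Psi(u,X_0,\sigma)\,\tfrac{\ud\sigma}{\sigma}$, and bubbling argument are precisely the mechanisms in those references, with the in-paper monotonicity formula (Proposition \ref{Monotonicity}) and $\va$-regularity (Proposition \ref{PartialRegularity}, Corollary \ref{PartialRegularityCor}) substituted so that the arguments apply to suitable solutions rather than Ginzburg--Landau approximations or smooth flows. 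So this is the same approach, fleshed out where the paper cites.
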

\begin{proof}
For the first three properties, we refer to \cite[Proposition 6.1]{Str88} or \cite[Theorem 3.1]{CS89} for a verbatim argument in the context of the Ginzburg-Landau equation. The fourth property is proved in the main theorem of \cite{Che91}.\footnote{The proof in \cite{Che91} relies on the monotonicity formula, which can be ensured for suitable solutions despite \cite{Che91} assuming smoothness.} The fifth property corresponds to \cite[Theorem C]{LW99} for the Ginzburg-Landau model, adjusted here accordingly.
\end{proof}

The following proposition is concerned about the blow-up limits for suitable solutions of \eqref{heatflow}.

\begin{prop}\label{BlowUplimits}
Assume that $ \{u_i\}\subset H_{\Lda}(P_2,\cN) $, $ \{X_i=(x_i,t_i)\}\subset P_1 $, and $ r_i\to 0^+ $. There exist $ v:\R^n\times\R\to\cN $ with $ \pa_tv,\na v\in L_{\loc}^2(\R^n\times\R) $, and $ \nu\in\M(\R^n\times\R) $ such that up to a subsequence, 
\be
\begin{gathered}
v_i:=T_{X_i,r_i}u_i\wc v\text{ weakly in }H_{\loc}^1(\R^n\times\R,\cN),\\
\f{1}{2}|\na v_i(x,t)|^2\ud x\ud t\wc^*\mu:=\f{1}{2}|\na v(x,t)|^2\ud x\ud t+\nu\text{ in }\M(\R^n\times\R).
\end{gathered}\label{ConvergencePropertyvi}
\ee
Define
\[
\Sg:=\bigcap_{r>0}\left\{X\in \R^n\times\R:\liminf_{i\to+\ift}E(v_i,X,r)\geq\va^2\right\},
\]
where $ \va>0 $ is the constant given by Proposition \ref{PartialRegularity}. Then the following properties hold.
\begin{enumerate}[label=$(\theenumi)$]
\item\label{BlowUplimits1} $ \Sg $ is relatively closed in $ \R^n\times\R $, and for any $ R\in(0,1) $, there exists $ C>0 $, depending only on $ \Lda,n,\cN $, and $ R>0 $ such that $ \cP^n(\Sg\cap P_R)\leq C $.
\item\label{BlowUplimits2} $ v\in C^{\ift}((\R^n\times\R)\backslash\Sg,\cN) $ and
\[
v_i\to v\text{ strongly in }H_{\loc}^1\cap C_{\loc}^{\ift}((\R^n\times\R)\backslash\Sg,\cN).
\]
Additionally, $ v $ is a weak solution of the harmonic map flow \eqref{heatflow}.
\item\label{BlowUplimits3} $ \sing(v)\cup\supp(\nu)=\Sg $.
\item\label{BlowUplimits4} For any $ R>0 $, 
\[
\HH^{n-2}(\Sg\cap(B_R\times\{t\}))\leq C\quad\text{for any }t\in(-R^2,R^2),
\]
where $ C>0 $ depends only on $ \Lda,n,\cN $, and $ R $.
\item\label{BlowUplimits5} If the target manifold $ \cN $ does not admit harmonic $ 2 $-spheres, then $ \cP^n(\Sg)=0 $ and 
\[
v_i\to v\text{ strongly in }H_{\loc}^1(\R^n\times\R,\cN).
\]
\item\label{BlowUplimits6} If for some $ 0<r<R<+\ift $, $ \cW(u_i,X_i,Rr_i,rr_i)\to 0^+ $, then $ v,\nu $, and $ \mu $ are all backward self-similar in $ \R^n\times(-4R^2,-r^2) $. In particular, if $ u_i\equiv u $ and $ X_i\equiv X $ for any $ i\in\Z_+ $, then $ v,\nu $, and $ \mu $ are all backward self-similar in $ \R^n\times\R_- $.
\item\label{BlowUplimits7} If for some $ 0<r<R<+\ift $ and $ k\in\Z\cap[1,n] $ there are $ \{V_i\}\subset\bG(n,k) $ such that $ V_i\to V\in\bG(n,k) $, and
\be
\lim_{i\to+\ift}\(r_i^{-n}\int_{t_i-R^2r_i^2}^{t_i-r^2r_i^2}\int_{B_{r_i}(x_i)}|V_i\cdot\na u_i|^2\ud x\ud t\)=0,\label{ViGradientU}
\ee
then $ v,\nu $, and $ \mu $ are all backward invariant with respect to $ V $ in $ B_1\times(-R^2,-r^2) $.
\item\label{BlowUplimits8} For $ \rho>0 $ and $ X\in\R^n\times\R $, let
\be
\Phi(\mu,X,\rho):=\f{1}{2}\int_{T_{\rho}(X)}G_X(y,s)\ud\mu(y,s).\label{Phimudef}
\ee
For any $ R_2>R_1>0 $ and $ X=(x,t)\in\R^n\times\R $, we have
\be
\begin{aligned}
&C(\Phi(\mu,X,R_2)-\Phi(\mu,X,R_1))\\
&\quad\geq\int_{R_1}^{R_2}\(\int_{T_{\rho}(X)}\f{|(y-x)\cdot\na v+2(s-t)\pa_sv|^2}{|s-t|}G_X(y,s)\ud y\ud s\)\f{\ud\rho}{\rho},
\end{aligned}\label{Phimu}
\ee
where $ C>0 $ depends only on $ \Lda,n $, and $ \cN $.
\end{enumerate}
\end{prop}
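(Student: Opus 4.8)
The plan is to reduce the whole statement to two inputs already available, namely the compactness theorem (Proposition \ref{Compactness}) and the almost-monotonicity formula (Proposition \ref{Monotonicity}), the latter invoked only at scales comparable to $r_i$, so that the failure of scale-invariance and the presence of the cut-off produce errors that vanish in the limit. First I would note that, suitable solutions being preserved under parabolic translation and dilation, each $v_i=T_{X_i,r_i}u_i$ is a suitable solution of \eqref{heatflow} on $\cP_{X_i,r_i}(P_2)$, and that Lemma \ref{UniformBound} gives, for each fixed $R>0$, uniform bounds on $\|\na v_i\|_{L^2(P_R)}^2=R^nE(u_i,X_i,Rr_i)$ and on $\|\pa_tv_i\|_{L^2(P_R)}$ once $i$ is large. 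Exhausting $\R^n\times\R$ by parabolic balls, applying Proposition \ref{Compactness} on each of them (after a rescaling to $P_1$), and diagonalising in the exhausting index then produces $v$, the defect measure $\nu$, the measure $\mu$, and the set $\Sg$ with the convergences \eqref{ConvergencePropertyvi}; items \ref{BlowUplimits1}--\ref{BlowUplimits5} are inherited from the corresponding conclusions of Proposition \ref{Compactness} on each ball, with constants allowed to depend on the exhausting radius.

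For item \ref{BlowUplimits8} I would fix $X=(x,t)$ and $R_2>R_1>0$, apply Proposition \ref{Monotonicity} to $u_i$ with centre $\cP_{X_i,r_i}^{-1}(X)\in P_2$ and scales $R_1r_i<R_2r_i$, and rescale everything to $v_i$. Since $\vp\equiv1$ on $B_1$ while the regions $T_{\rho r_i}(\cP_{X_i,r_i}^{-1}(X))$ shrink to a point as $i\to\ift$, the cut-off is identically $1$ there, so $\Phi(u_i,\cP_{X_i,r_i}^{-1}(X),\rho r_i)$ transforms exactly into $\f{1}{2}\int_{T_\rho(X)}|\na v_i|^2G_X$, while $\exp(C_1(R_2-R_1)r_i)\to1$ and $C_1(R_2-R_1)r_i\to0$. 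Letting $i\to\ift$, the left-hand side converges to $\Phi(\mu,X,R_2)-\Phi(\mu,X,R_1)$ in the sense of \eqref{Phimudef}, by the weak-$\ast$ convergence of $\f{1}{2}|\na v_i|^2\,\ud x\,\ud t$ and the boundedness and continuity of $G_X$ on the regions $T_\rho(X)$ (where $s-t\le-\rho^2<0$); the right-hand side is bounded below, by the convexity and hence weak lower semicontinuity of $w\mapsto\int\frac{|(y-x)\cdot\na w+2(s-t)\pa_sw|^2}{|s-t|}G_X$, by the same quantity evaluated at $v$. This is precisely \eqref{Phimu}.

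Items \ref{BlowUplimits6} and \ref{BlowUplimits7} would then follow by combining this rescaling with a rigidity argument. In \ref{BlowUplimits6}, since the cut-off is trivial at the vanishing scales $Rr_i,rr_i$, the hypothesis $\cW(u_i,X_i,Rr_i,rr_i)\to0^+$ gives $\Phi(\mu,0^{n,1},R)=\Phi(\mu,0^{n,1},r)$, hence, by the monotonicity just established, $\Phi(\mu,0^{n,1},\cdot)$ is constant on $[r,R]$; feeding this into \eqref{Phimu} on every subinterval forces $y\cdot\na v+2s\pa_sv=0$ a.e.\ on $\bigcup_{\rho\in(r,R)}T_\rho(0^{n,1})=\R^n\times(-4R^2,-r^2)$, which is backward self-similarity of $v$ there. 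For the measures, I would observe that the right-hand side of the rescaled monotonicity formula for $v_i$ tends to $0$, so the rescaled energy measures acquire no extra radial defect in the limit; together with the stationary, $(n-2)$-rectifiable structure of $\nu$ provided by Proposition \ref{Compactness} (and as in \cite{CS89,LW99,LW02b}) this upgrades the constancy of $\Phi(\mu,0^{n,1},\cdot)$ to backward self-similarity of $\nu$, hence of $\mu$, on $\R^n\times(-4R^2,-r^2)$. For the last assertion, when $u_i\equiv u$ and $X_i\equiv X$ the almost-monotonicity of Proposition \ref{Monotonicity} implies that $\Phi(u,X,\rho)$ has a finite limit as $\rho\to0^+$, so the hypothesis holds automatically for all $R>r>0$, and letting $R\to+\ift$ and $r\to0^+$ yields backward self-similarity of $v,\nu,\mu$ on $\R^n\times\R_-$. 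In \ref{BlowUplimits7} I would rescale \eqref{ViGradientU} to $\int_{B_1\times(-R^2,-r^2)}|V_i\cdot\na v_i|^2\to0$, use $V_i\to V$ together with the uniform energy bound to replace $V_i$ by $V$, and apply weak lower semicontinuity of $w\mapsto\int|V\cdot\na w|^2$ to get $V\cdot\na v=0$, i.e.\ backward $V$-invariance of $v$; the same vanishing controls the $V$-directional part of $\f{1}{2}|\na v_i|^2\,\ud x\,\ud t$ and, again through the structure of $\nu$, gives backward $V$-invariance of $\nu$ and $\mu$.

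The main obstacle is precisely the transfer from the map to the measures in \ref{BlowUplimits6}--\ref{BlowUplimits7}: establishing backward self-similarity (respectively $V$-invariance) of the defect measure $\nu$, which carries no gradient, from the corresponding property of $v$. This seems to require either the stationarity and rectifiability of $\nu$, so that $\Phi(\nu,\cdot)$ is itself monotone and its forced constancy yields conicality, or a quantitative ``no energy concentration'' estimate for the rescaled energy measures. By contrast, the remaining technical points---the non-nesting of the slices $T_\rho$, the cut-off in $\Phi$ and $\Psi$, and boundary contributions at exceptional radii---cause no difficulty here, because every scale entering the monotonicity formula is $O(r_i)\to0$.
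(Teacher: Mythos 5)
Your overall architecture coincides with the paper's: items \ref{BlowUplimits1}--\ref{BlowUplimits5} are indeed obtained by combining Lemma \ref{UniformBound} with Proposition \ref{Compactness} on an exhaustion, item \ref{BlowUplimits8} is exactly the rescaled Proposition \ref{Monotonicity} passed to the limit (with the cut-off trivialized at scales $O(r_i)$ and the error terms $C_1(R_2-R_1)r_i$ vanishing), and the map-level parts of \ref{BlowUplimits6}--\ref{BlowUplimits7}, including the ``in particular'' clause via the existence of $\lim_{\rho\to0^+}\Phi(u,X,\rho)$, are argued the same way. One small point you gloss over in \ref{BlowUplimits8}: since $T_\rho(X)$ is an unbounded slab, passing the weak-$\ast$ convergence through $\int_{T_\rho(X)}G_X\,\ud\mu_i$ needs the uniform density bound $\sup_{\rho,X}\rho^{-n}\mu(P_\rho(X))\le C$ (Remark \ref{Remarkmubound}) to control the Gaussian tails; this is routine but should be said.

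The one genuine gap is the one you flag yourself: transferring self-similarity/invariance from $v$ to $\nu$ and $\mu$. For item \ref{BlowUplimits7} the paper does \emph{not} go through rectifiability or a no-concentration estimate for $\nu$; it uses the stationarity identity \eqref{StationaryCondition2} for $v_i$ directly, at the level of the approximating sequence. Concretely, one computes $\f{\ud}{\ud\lda}\int\phi(y+\lda\tau,s)G_{0^{n,1}}(y+\lda\tau,s)\,\ud\mu(y,s)$ for $\tau\in V$ (formula \eqref{phiGmu}), and then tests \eqref{StationaryCondition2} for $v_i$ with $\xi=\tau\,\phi(\cdot+\lda\tau)\,G_{0^{n,1}}(\cdot+\lda\tau,\cdot)$: every term on the resulting right-hand side contains a factor $\tau\cdot\na v_i$ or is paired with one via Cauchy--Schwarz, so the rescaled hypothesis \eqref{ViGradientU} forces the whole expression to vanish as $i\to+\ift$. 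This shows the translated weighted mass of $\mu$ (defect measure included) is independent of $\lda$, which is \eqref{theresult1}, i.e.\ backward $V$-invariance of $\mu$ and hence of $\nu$. The same device, with the radial vector field in place of $\tau$, is what underlies the self-similarity of $\nu$ and $\mu$ in item \ref{BlowUplimits6}; the paper cites \cite[Lemma 3.1]{LW02b} for that step rather than repeating it. So your first suggested route (exploit stationarity of the energy measure) is the right one, but it should be implemented on $v_i$ before passing to the limit rather than on the limit measure $\nu$, whose own stationarity and rectifiability you would otherwise have to establish first.
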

\begin{proof}
The existence of $ v $ and the convergence of $ v_i $ follow from Lemma \ref{UniformBound}. The first five properties are inherited from Proposition \ref{Compactness}. 

For the sixth property, applying Proposition \ref{Monotonicity} to $ \{u_i\} $ and using standard scaling, we obtain
\begin{align*}
&\int_r^R\(\int_{T_{\rho}(0^{n,1})}\vp_{0^n}^2(r_iy)\f{|y\cdot\na v_i+2s\pa_sv_i|^2}{|s|}G_{0^{n,1}}(y,s)\ud y\ud s\)\f{\ud\rho}{\rho}\\
&\quad\leq  C(\Lda,n,\cN,R,r)(\cW(u_i,X_i,Rr_i,rr_i)+r_i).
\end{align*}
If $ \cW(u_i,X_i,Rr_i,rr_i)\to 0^+ $, then
\[
\lim_{i\to+\ift}\int_{-4R^2}^{-r^2}\(\int_{\R^n}\phi|y\cdot\na v_i+2s\pa_sv_i|^2\ud y\)\ud s=0
\]
for any $ \phi\in C_0^{\ift}(\R^n\times(-4R^2,-r^2),\R_{\geq 0}) $. Arguing as in the proof of Corollary \ref{corselfsimilar}, we conclude that $ v $, $ \nu $, and $ \mu $ are backward self-similar (see also \cite[Lemma 3.2]{LW02b}).

For the seventh property, the invariance of $ v $ with respect to $ V $ is a consequence of  \eqref{ViGradientU}. To prove $ \mu $ is invariant with respect to $ V $, it is enough to show that for any $ \phi\in C_0^{\ift}(B_1\times(-R^2,-r^2))$ and $ \tau\in V $,
\be
\int_{B_1}\int_{-R^2}^{-r^2}\phi(\cdot+\lda\tau,\cdot)G_{0^{n,1}}(\cdot+\lda \tau,\cdot)\ud\mu(y, s)=\int_{B_1}\int_{-R^2}^{-r^2}\phi G_{0^{n,1}}\ud\mu(y, s)\label{theresult1}
\ee
whenever $ \lda>0 $ with $ \phi(\cdot+\lda\tau,\cdot)\in C_0^{\ift}(B_1\times(-R^2,-r^2)) $. Indeed, the desired result follows from the arbitrariness of $ \phi $ and $ \tau $. Through simple calculations, it holds that
\be
\begin{aligned}
&\f{\ud}{\ud\lda}\(\int_{B_1}\int_{-R^2}^{-r^2}\phi(\cdot+\lda\tau,\cdot)G(\cdot+\lda\tau, \cdot)
\ud\mu(y, s)\)\\
=&\int_{B_1}\int_{-R^2}^{-r^2}\(\f{[(\phi(y+\lda\tau,s)(y+\lda\tau)+2s\na\phi(y+\lda\tau,s))]\cdot\tau}{2s}\)G_{0^{n,1}}(y+\lda\tau,s)\ud\mu(y,s).
\end{aligned}\label{phiGmu}
\ee
Using \eqref{StationaryCondition2} with $ \xi=\tau\phi(\cdot+\lda\tau,\cdot)G_{0^{n,1}}(\cdot+\lda\tau,\cdot) $, we obtain that
\begin{align*}
&\int_{B_1}\int_{-R^2}^{-r^2}\(\f{[(\phi(y+\lda\tau,s)(y+\lda\tau)+2s\na\phi(y+\lda\tau,s))]\cdot\tau}{4s}\)|\na v_i|^2G_{0^{n,1}}(y+\lda\tau,s)\ud y\ud s\\
=&\int_{B_1}\int_{-R^2}^{-r^2}(\tau\cdot\na v_i)(\na\phi(y+\lda\tau,s)\cdot\na v_i)G_{0^{n,1}}(y+\lda\tau,s)\ud y\ud s\\
&+\int_{B_1}\int_{-R^2}^{-r^2}\(\f{((y+\lda\tau)\cdot\na v_i)(\tau\cdot\na v_i)}{2s}\)G_{0^{n,1}}(y+\lda\tau,s)\ud y\ud s\\
&+\int_{B_1}\int_{-R^2}^{-r^2}\pa_tv_i\na v_i\cdot\tau\phi(y+\lda\tau,s)G_{0^{n,1}}(y+\lda\tau,s)\ud y\ud s.
\end{align*}
It follows from \eqref{ViGradientU} that the left-hand side of the above formula converges to $ 0 $ as  $ i\to+\ift $. Consequently, the right-hand side of \eqref{phiGmu} is $ 0 $, and we obtain \eqref{theresult1}. 

It remains to prove the last property. With the help of Proposition \ref{Monotonicity}, for $ R_2>R_1>0 $ and $ X\in\R^n\times\R $, we have
\begin{align*}
&\Phi(u_i,P_{X_i,r_i}^{-1}(X),R_2r_i)\exp(C_1(R_2-R_1)r_i)-\Phi(u_i,P_{X_i,r_i}^{-1}(X),R_1r_i)+C_1(R_2-R_1)r_i\\
&\geq C_2\int_{R_1r_i}^{R_2r_i}\(\int_{T_{\rho}(P_{X_i,r_i}^{-1}(X))}\vp^2(y-y_i)\f{|(y-y_i)\cdot\na u_i+2(s-s_i)\pa_su_i|^2}{|s-s_i|}G_{P_{X_i,r_i}^{-1}(X)}(y,s)\ud y\ud s\)\f{\ud\rho}{\rho}\\
&=C_2\int_{R_1}^{R_2}\(\int_{T_{\rho}(X)}\vp^2(r_i(y-x))\f{|(y-x)\cdot\na v_i+2(s-t)\pa_sv_i|^2}{|s-t|}G_{X}(y,s)\ud y\ud s\)\f{\ud\rho}{\rho},
\end{align*}
where $ y_i=x_i+r_ix $ and $ s_i=t_i+r_i^2t $. Taking $ i\to+\ift $ and using \eqref{ConvergencePropertyvi}, the reasoning is closed.
\end{proof}
\begin{rem}\label{Remarkmubound}
It follows from Lemma \ref{UniformBound} that 
\[
\sup_{\rho>0,\,\,X\in\R^n\times\R}\rho^{-n}\mu(P_{\rho}(X))\leq C,
\]
where $ C>0 $ depends only on $ \Lda,n $, and $ \cN $. Thus, the quantity in \eqref{Phimudef} is well-defined.
\end{rem}

Finally, we apply the above compactness results to build connections between the top space-time strata and the top spatial strata.

\begin{prop}\label{SgnSgn2}
Assume that $ u:P_2\to\cN $ is a suitable solution for \eqref{heatflow}. Then, for any $ t\in(-4,4) $, $ \Sg^n(u)\cap\{t\}=\Sg^{n-2}(u,t)\times\{t\} $.
\end{prop}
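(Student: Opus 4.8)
\emph{Plan of proof.} The plan is to deduce the identity from a single rigidity fact: \emph{if $\mu$ is a tangent measure of $u$ at a point $X$ with $\mu\llcorner(\R^n\times\R_-)\neq 0$, then $\ud(\mu)\leq n-2$} (equivalently, $\ud(\mu)\geq n-1$ forces $\mu\llcorner(\R^n\times\R_-)=0$). Granting this, fix $X=(x,t)$. If $X\notin\sing(u)$, then $u$ is smooth near $X$, the only tangent measure is the zero measure, which is backwardly self-similar, invariant with respect to all of $\R^n$, and static, hence both space-time $(n+2)$-symmetric and spatial $n$-symmetric; so $X\notin\Sg^n(u)$ and $x\notin\Sg^{n-2}(u,t)$. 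If $X\in\sing(u)$, the monotonicity formula (Proposition \ref{Monotonicity}) guarantees that $\Theta(u,X):=\lim_{\rho\to 0^+}\Phi(u,X,\rho)$ exists, and $\va$-regularity (Proposition \ref{PartialRegularity}) forces $\Theta(u,X)\geq\va_1(\Lda,n,\cN)>0$: otherwise $\Phi(u,X,\rho)\to 0$, which via $G_X\geq c(n)\rho^{-n}$ on small parabolic neighborhoods gives $E(u,X,c\rho)\to 0$ and $X\in\reg(u)$. Passing to the limit in the monotone density (the cut-offs $\vp_{x}(x+r_i\,\cdot\,)$ tend to $1$ and the tails are controlled by Remark \ref{Remarkmubound}), every tangent measure $\mu$ at $X$ satisfies $\Phi(\mu,0^{n,1},\rho)=\Theta(u,X)$ for all $\rho>0$, so $\mu\big(\R^n\times[-4\rho^2,-\rho^2]\big)>0$ for every $\rho$, i.e.\ $\mu\llcorner(\R^n\times\R_-)\neq 0$. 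The rigidity fact then gives $\ud(\mu)\leq n-2$ for every tangent measure at $X$, whence $\rD(\mu)\leq\ud(\mu)+2\leq n$; thus no tangent measure at $X$ is space-time $(n+1)$-symmetric or spatial $(n-1)$-symmetric, i.e.\ $X\in\Sg^n(u)$ and $x\in\Sg^{n-2}(u,t)$. Altogether the membership of $X$ in $\Sg^n(u)$ and of $x$ in $\Sg^{n-2}(u,t)$ each coincides with $X\in\sing(u)$, proving $\Sg^n(u)\cap\{t\}=\sing(u)\cap\{t\}=\Sg^{n-2}(u,t)\times\{t\}$, which contains the chain \eqref{Topchain}.

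To prove the rigidity fact, normalize $X=0^{n,1}$, take a tangent measure $\mu$ with $\ud(\mu)\geq n-1$, and let $V\in\bG(n,n-1)$ be a subspace with respect to which $\mu$ is backwardly invariant; $\mu$ is also backwardly self-similar. Write $\mu=\f{1}{2}|\na v|^2\ud x\ud t+\nu$ with $v,\nu$ the corresponding tangent flow and defect measure, and let $\Sg_v=\sing(v)\cup\supp\nu$ be the concentration set of Proposition \ref{BlowUplimits}. For $X'\in\R^n\times\R_-$ one has $X'\in\Sg_v$ if and only if the lower $n$-density $\liminf_{\rho\to 0^+}(2\rho)^{-n}\mu(\overline{P_\rho(X')})$ is positive: off $\Sg_v$ the measure $\mu$ coincides near $X'$ with $\f{1}{2}|\na v|^2\ud x\ud t$ and $v$ is smooth, so this density vanishes, while on $\Sg_v$ the $\va^2$-lower bound defining $\Sg_v$ together with weak$^*$ upper semicontinuity on compacts makes it $\geq c(\Lda,n,\cN)>0$. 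Since $\mu$ is $V$-invariant on $\R^n\times\R_-$, so is this lower density, hence so is $\Sg_v\cap(\R^n\times\R_-)$. If that set contained a point $(x_1,t_1)$ with $t_1<0$, it would contain the entire $(n-1)$-plane $x_1+V$ inside the slice $\{t_1\}$, forcing $\HH^{n-2}(\Sg_v\cap\{t_1\}\cap B_R)=+\infty$ for large $R$, contradicting Proposition \ref{BlowUplimits}\ref{BlowUplimits4} (one could equally use the $\cP^n$-bound of Proposition \ref{BlowUplimits}\ref{BlowUplimits1} together with the self-similarity of $\Sg_v$). Hence $\Sg_v\cap(\R^n\times\R_-)=\emptyset$: $v$ is smooth on $\R^n\times\R_-$, $\nu$ vanishes there, and $\mu\llcorner(\R^n\times\R_-)=\f{1}{2}|\na v|^2\ud x\ud t$ with $|\na v|^2$ now a genuine $V$-invariant function.

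It remains to show that this smooth backwardly self-similar $v$ is constant on $\R^n\times\R_-$. Self-similarity gives $v(x,t)=\psi(x/\sqrt{-t})$ with $\psi:=v(\cdot,-1):\R^n\to\cN$ smooth, solving $\Delta\psi-\f{1}{2}x\cdot\na\psi=-A(\psi)(\na\psi,\na\psi)$; Remark \ref{Remarkmubound} yields $\int_{\R^n}|\na\psi|^2e^{-|x|^2/4}\ud x<\infty$ together with polynomial bounds on $\psi$, and $|\na\psi(x)|^2=g(x_n)$ depends only on the $V^\perp$-coordinate $x_n$. Multiplying the equation by $x\cdot\na\psi\,e^{-|x|^2/4}$ and integrating over $\R^n$ kills the right-hand side (because $A(\psi)(\na\psi,\na\psi)\perp T_\psi\cN\ni x\cdot\na\psi$), and integration by parts (the Gaussian weight annihilates all boundary terms) yields the Pohozaev identity
\[
\int_{\R^n}|\na\psi(x)|^2\Big(\f{n-2}{2}-\f{|x|^2}{4}\Big)e^{-|x|^2/4}\,\ud x=0 .
\]
Substituting $|\na\psi|^2=g(x_n)$, $|x|^2=|x'|^2+x_n^2$ and integrating out $x'\in\R^{n-1}$ against $e^{-|x'|^2/4}$ (using $\int_{\R^{n-1}}e^{-|x'|^2/4}\ud x'=(4\pi)^{\f{n-1}{2}}$ and $\int_{\R^{n-1}}|x'|^2e^{-|x'|^2/4}\ud x'=2(n-1)(4\pi)^{\f{n-1}{2}}$) collapses this to
\[
\int_{\R}g(x_n)\,(2+x_n^2)\,e^{-x_n^2/4}\,\ud x_n=0 ,
\]
so $g\equiv 0$ since $g\geq 0$ and $(2+x_n^2)e^{-x_n^2/4}>0$. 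Thus $\na\psi\equiv 0$, $v$ is constant on $\R^n\times\R_-$, and $\mu\llcorner(\R^n\times\R_-)=0$, which proves the rigidity fact and the proposition.

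The main obstacle is the step, inside the rigidity fact, from ``$\mu$ is invariant with respect to $V$'' to ``$v$ is a smooth self-similar solution with $V$-invariant energy density'': the $V$-invariance of $\mu$ does not split into separate invariance of $v$ and of the defect measure $\nu$, and controlling the defect part forces one to invoke the uniform $\HH^{n-2}$-content (or $\cP^n$-) estimate on the concentration set together with its self-similarity — this is exactly the ingredient absent in the static harmonic map setting that must be supplied here. Once $\nu$ has been eliminated on the backward cone, the residual statement is an elliptic Liouville problem for a (quasi-)harmonic map on $\R^n$ whose gradient is invariant under a hyperplane, and the effective tool is the Gaussian Pohozaev identity, whose weight becomes strictly signed after integrating out the $(n-1)$ symmetric variables.
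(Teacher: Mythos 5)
Your proposal is correct and reaches the paper's conclusion, but the route differs in two substantive ways from the paper's own proof, and in both places your version is arguably tighter.

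The paper's argument at a point $x_0\notin\Sg^{n-2}(u,t_0)$ first upgrades the $V$-invariance from the tangent \emph{measure} $\mu$ to the tangent \emph{flow} $v$: it observes that $\Phi(\mu,(x,0),\rho)$ is constant in $\rho$ and $x\in V$, invokes Proposition~\ref{BlowUplimits}\ref{BlowUplimits8} to deduce that $v$ is backwardly self-similar at every $(x,0)$ with $x\in V$, and then applies Lemma~\ref{ConeSplittingLem} and Corollary~\ref{ConeSplittingCor} to conclude $V$-invariance of $v$. It then rules out $\Sg_v\cap(\R^n\times\R_-)$ via the $\HH^{n-2}$-slice bound by noting the singular set would contain translates of the hyperplane $V$, and finally asserts that $v$ is ``consequently'' constant for $t\leq 0$. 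You bypass the first step entirely by characterizing $\Sg_v=\sing(v)\cup\supp\nu$ as the positivity set of the lower $n$-density of $\mu$ (upper-semicontinuity on closed parabolic balls on one side, smoothness of $v$ off $\Sg_v$ on the other); since this quantity is manifestly $V$-invariant, so is $\Sg_v\cap(\R^n\times\R_-)$, and the $\HH^{n-2}$-slice bound of Proposition~\ref{BlowUplimits}\ref{BlowUplimits4} kills it. This is cleaner, and in particular it sidesteps the need to show that $v$ (rather than just $|\na v|^2$) inherits the invariance. For the constancy of $v$ on the backward cone, the paper's one-sentence step seems to rely on smoothness of $v$ at the origin $(0,0)$ together with self-similarity, which is delicate because the $V$-invariance one has only lives on $\{t<0\}$ and does not obviously extend to the slice $\{t=0\}$; your Gaussian Pohozaev identity works entirely on $\{t<0\}$ and only needs $|\na\psi|^2$ (not $\psi$) to be $V$-invariant, after which integrating out the $(n-1)$ invariant variables turns the indefinite weight $\tfrac{n-2}{2}-\tfrac{|x|^2}{4}$ into the strictly negative $-\tfrac12-\tfrac{x_n^2}{4}$. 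I checked the Pohozaev computation and the Gaussian moment constants; they are correct. So your argument genuinely closes a gap the paper leaves implicit, at the modest cost of needing decay for $\na\psi$ to justify the boundary terms, which you rightly attribute to Remark~\ref{Remarkmubound} and the self-similar structure (the one-dimensional reduction $|\phi'(s)|^2=|\phi'(0)|^2 e^{s^2/2}$ would give the same conclusion more directly in the truly $V$-invariant case, but your version handles the weaker hypothesis that only $|\na v|^2$ is invariant).

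Two smaller remarks. First, your framing via ``$\Theta(u,X)>0$ at singular points, hence $\mu\llcorner(\R^n\times\R_-)\neq 0$ for every tangent measure'' is a detour the paper avoids: it argues the contrapositive directly, showing $\mu\llcorner(\R^n\times\R_-)=0$ forces $E_-(u,X_0,r_i)\to 0$ and hence regularity via Corollary~\ref{PartialRegularityCor}. Your detour is conceptually fine but the one-line justification ``$\Phi\to 0\Rightarrow E\to 0$'' glosses over the fact that the domains $T_\rho(X)$ and $P_{c\rho}^-(X)$ are time-disjoint; making it rigorous requires routing through the localized energy inequality and Lemma~\ref{ComparisonEandPhi} as the paper does. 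Since you could simply reuse the paper's direct implication, this is a stylistic rather than substantive issue. Second, you correctly note that $V$-invariance of $\mu$ does not split a priori into invariance of $v$ and of $\nu$; the paper resolves this by first proving invariance of $v$, you resolve it by first proving $\nu\llcorner(\R^n\times\R_-)=0$ — both are valid, and your order of operations is what makes the density characterization of $\Sg_v$ effective.
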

\begin{proof}
It suffices to show that \[  \Sg^n(u)\cap\{t_{0}\}\subset\Sg^{n-2}(u,t_0)\times\{t_0\},\quad\text{for}~  t_0\in(-4,4).\]
For $ x_0\notin\Sg^{n-2}(u,t_0) $, let $ v $ be a tangent flow and $ \mu $ be a tangent measure of $ u $ at $ X_0=(x_0,t_0) $ with $ \mu=\f{1}{2}|\na v|^2\ud x\ud t+\nu $, where $ \nu $ is the defect measure. Moreover, we can assume $ \mu $ is spatially $ (n-1) $-symmetric. 

We claim: $ v $ and $ \mu $ are both spatially $ (n-1) $-symmetric. 

It follows from Proposition \ref{BlowUplimits}\ref{BlowUplimits6} that $ v $ and $ \mu $ are both backward self-similar, so we only need to show that $ v $ is backward invariant with respect to some $ (n-1) $-dimensional subspace. Assume that $ \mu $ is backward invariant with respect to $ V\in\bG(n,n-1) $. We intend to prove that $ v $ is also backward invariant with respect to $ V $. The proof follows almost the same arguments as in \cite[Lemma 2.18]{FWZ24}. Suppose that as $ r_i\to 0^+ $, 
\begin{align*}
v_i:=T_{X_0,r_i}u\wc v\text{ weakly in }H_{\loc}^1(\R^n\times\R,\cN)
\end{align*}
and 
\begin{align*}
\quad\f{1}{2}|\na v_i|^2\ud x\ud t\wc^*\mu\text{ in }\M(\R^n\times\R).
\end{align*}
Since $ \mu $ is backward self-similar and backward invariant with respect to $ V $, we have
\[
\Phi(\mu,(x,0),\rho)=\f{1}{2}\int_{T_{\rho}((x,0))}G_{(x,0)}(y,s)\ud\mu(y,s)
\]
is a constant function with respect to $ \rho>0 $ and $ x\in V $. Given Proposition \ref{BlowUplimits}\ref{BlowUplimits8}, especially formula \eqref{Phimu}, we see that $ v $ is backward self-similar at $ (x,0) $ with $ x\in V $. By  Lemma \ref{ConeSplittingCor}, we see that $ v $ is backward invariant with respect to $ V $. As a result, Proposition \ref{BlowUplimits}\ref{BlowUplimits3} and \ref{BlowUplimits4} imply that $ v $ is smooth for $ t\leq 0 $ and $ \nu\llcorner(\R^n\times\R_-)=0 $. Indeed, if $ v $ is not smooth for $ t\leq 0 $ or $ \nu\llcorner(\R^n\times\R_-)\neq 0 $, it will contradict the estimate of the local uniform $ \HH^{n-2} $-measure for the time slices of the concentration set since $ V $ is of dimension $ (n-1) $. Consequently, $ v $ is a constant map for $ t\leq 0 $. Moreover $ \mu\llcorner(\R^n\times\R_-)=0 $. It follows that 
\[
\lim_{i\to+\ift}E_-(u,X_0,r_i)=\lim_{i\to+\ift}E_-(v_i,0^{n,1},1)=0.
\]
Using Corollary \ref{PartialRegularityCor}, for sufficiently large $ i\in\Z_+ $, we see that $ u $ is smooth in the small neighborhood of $ X_0 $, placing $ X_0 $ in the regular set. This completes the proof.
\end{proof}

\section{Quantitative spatial stratification}\label{QuantitativeStratification}
In this section, we define the quantitative spatial stratification of harmonic map flows. The concept of quantitative stratification was first introduced by Cheeger and Naber in \cite{CN13a}. For some specific applications, we need to make certain adjustments to the definition.

We begin with the definition of quantitative spatial stratification.  

\begin{defn}[Quantitative spatial symmetry]
Suppose that $ r,R>0 $, $ X:=(x,t)\in P_R $ and $ u\in H_{\Lda}(P_R,\cN) $. We say that $ u $ is spatially $ (k,\va) $-symmetric in $ B_r(x) $ at the time $ t $ if $P_{r}(X)\subset P_{R}$ and there exists a spatially $ k $-symmetric Radon measure $ \mu\in\M(\R^n\times\R) $ satisfying
\[
\MM_{0^{n,1},1}\(\f{1}{2}|\na(T_{X,r}u)|^2\ud x\ud t,\mu\)<\va.
\]
\end{defn}

\begin{defn}[Quantitative spatial stratification]\label{quantitativeSpdef}
Let $ 0<r<R\leq 1 $ and $ u:P_4\to\cN $ be a suitable solution to the harmonic map flow \eqref{heatflow}. For each  $ k\in\Z\cap[0,n
] $, we define the spatial $ k $-th $ (\va,r) $-stratification of $ u $ at time $ t\in(-4,4) $ and scales $ r,R $ as
\[
\Sg_{\va;r,R}^k(u,t):=\{x\in B_2:\text{ for any }s\in[r,R],\,\,u\text{ is not spatially }(k+1,\va)\text{-symmetric in }B_s(x)\text{ at }t\}.
\]
Additionally, we define
\[
\Sg_{\va;0,R}^k(u,t):=\bigcap_{0<r<R}\Sg_{\va;r,R}^k(u,t).
\]
In other words,
\[
\Sg_{\va;0,R}^k(u,t)=\{x\in B_2:~\text{for any}~r\in(0,R], u\text{ is not spatially }(k+1,\va)\text{-symmetric in }B_r(x)\text{ at }t\}.
\]
For simplicity, we have the convention that $ \Sg_{\va;r}^k(u,t):=\Sg_{\va;r,1}^k(u,t) $
for any $ r\geq 0 $.
\end{defn}

\begin{rem}\label{inclusionSvak}
For $ \va,\va'>0 $, $ k,k'\in\Z\cap[0,n-1] $, $ 0<r<R\leq 1 $, and $ 0<r'<R'\leq 1 $. If $ \va\geq\va' $, $ k\leq k' $, $ r\leq r' $, and $ R\geq R' $, then $
\Sg_{\va;r,R}^k(u,t)\subset \Sg_{\va';r',R'}^{k'}(u,t) $ and $ \Sg_{\va;0,R}^k(u,t)\subset\Sg_{\va';0,R'}^{k'}(u,t) $.
\end{rem}

\begin{rem}\label{decomSkuseSva}
Similar to \cite[Formula (1.6)]{CN13b}, it follows from compactness arguments that for any $ R\in(0,1) $,
\be
\Sg^k(u,t)\cap B_2=\bigcup_{\va>0}\Sg_{\va;0,R}^k(u,t)=\bigcup_{\va>0}\bigcap_{0<r<R}\Sg_{\va;r,R}^k(u,t).\label{SgkB2}
\ee
\end{rem}

\begin{rem}
In the original definition of quantitative stratification introduced by Cheeger-Naber \cite{CN13a}, the authors did not restrict the scale $ R $. Indeed, they set $ R=1 $. However, we need to set precise upper scales. 
\end{rem}
We now present one useful property of the quantitative spatial symmetry.

\begin{lem}\label{BigBallSmallBall}
Let $ \Lda>0 $, $ k\in\Z\cap[0,n] $, $ r>0 $, $ X_0=(x_0,t_0)\in\R^n\times\R $ and $ u\in H_{\Lda}(P_{2r}(X_0),\cN) $. For any $ \va>0 $, there exists $ \delta\in(0,\va) $, depending only on $ \va,\Lda,n $, and $ \cN $ such that if $ u $ is spatially $ (k,\delta) $-symmetric in $ B_r(x_0) $ at $ t_0 $, then it is also spatially $ (k,\va) $-symmetric in $ B_{\f{r}{2}}(x_0) $ at $ t_0 $.
\end{lem}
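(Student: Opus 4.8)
The plan is to argue by contradiction using the compactness and blow-up machinery assembled in Proposition \ref{BlowUplimits} (and the weak${}^*$-compactness of the energy measures together with the metric $\MM$). Suppose the conclusion fails: then for some $\va>0$ there exist sequences $\delta_i\to 0^+$, points $X_i=(x_i,t_i)$, radii $r_i>0$, and suitable solutions $u_i\in H_{\Lda}(P_{2r_i}(X_i),\cN)$ such that $u_i$ is spatially $(k,\delta_i)$-symmetric in $B_{r_i}(x_i)$ at $t_i$, but $u_i$ is \emph{not} spatially $(k,\va)$-symmetric in $B_{r_i/2}(x_i)$ at $t_i$. After applying the blow-up rescaling $v_i:=T_{X_i,r_i}u_i$ we may assume $X_i=0^{n,1}$ and $r_i=1$, so $v_i\in H_{\Lda}(P_2,\cN)$; the normalization keeps the energy bound by $\Lda$ by scaling invariance of $\|\na\cdot\|_{L^2}$ under $T_{X,r}$ only up to the $\rho^{-n}$ factor, which is exactly the one built into the definition of $\MM$ and of $H_\Lda$ on the unit scale, so this is harmless (one may need to pass to a slightly larger fixed ball, which is fine since $v_i$ is defined on $P_2$).

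Next I would extract limits. By Lemma \ref{UniformBound} and Proposition \ref{Compactness} (or directly Proposition \ref{BlowUplimits} with $r_i\equiv 1$), up to a subsequence $v_i\wc v$ weakly in $H^1_{\loc}$ and $\tfrac12|\na v_i|^2\,\ud x\,\ud t\wc^*\mu=\tfrac12|\na v|^2\,\ud x\,\ud t+\nu$ in $\M(\R^n\times\R)$. The hypothesis that $v_i$ is $(k,\delta_i)$-symmetric in $B_1$ at $0$ gives spatially $k$-symmetric measures $\mu_i$ with $\MM(\tfrac12|\na v_i|^2\,\ud x\,\ud t,\mu_i)<\delta_i$; since $\delta_i\to 0$ and the energy densities converge weak${}^*$ to $\mu$, we get $\mu_i\wc^*\mu$, and since backward self-similarity and backward invariance with respect to a fixed subspace are preserved under weak${}^*$ convergence of Radon measures (as recalled in the proof of Remark \ref{decomSkuseSva}, citing \cite[Lemma 2.27]{FWZ24}), after passing to a further subsequence so that the associated $k$-dimensional subspaces $V_i$ converge to some $V\in\bG(n,k)$, the limit $\mu$ is spatially $k$-symmetric with respect to $V$. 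In particular $\mu$ is spatially $k$-symmetric in $\M(\R^n\times\R)$, hence its restriction to $P_{1/2}$ exhibits this symmetry.

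Now I would derive the contradiction: $\MM_{0^{n,1},1/2}$-convergence. Because $\tfrac12|\na v_i|^2\,\ud x\,\ud t\wc^*\mu$ in $\M(\R^n\times\R)$, we in particular have convergence on $P_{1/2}$, which by the characterization of the metric $\MM_{X,r}$ via weak${}^*$ convergence means $\MM_{0^{n,1},1/2}\bigl(\tfrac12|\na v_i|^2\,\ud x\,\ud t,\mu\bigr)\to 0$; using the scaling identity $\MM_{0^{n,1},1/2}(\cdot,\cdot)=\MM\bigl(T_{0^{n,1},1/2}\cdot,T_{0^{n,1},1/2}\cdot\bigr)$ and the fact that $T_{0^{n,1},1/2}\mu$ is again spatially $k$-symmetric (spatial symmetry is scale-invariant, being a statement about backward self-similarity plus invariance under a subspace), we conclude that for $i$ large, $v_i$ is spatially $(k,\va)$-symmetric in $B_{1/2}(0^n)$ at time $0$. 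Undoing the rescaling $T_{X_i,r_i}$, this says $u_i$ is spatially $(k,\va)$-symmetric in $B_{r_i/2}(x_i)$ at $t_i$, contradicting our standing assumption.

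The main obstacle is the careful bookkeeping of the two metrics $\MM=\MM_{0^{n,1},1}$ and $\MM_{0^{n,1},1/2}$ and the rescaling operators $T_{X,r}$: one must check that the $(k,\delta_i)$-symmetry hypothesis on $B_{r_i}(x_i)$, phrased via $\MM$ after the rescaling $T_{X_i,r_i}$, really does force weak${}^*$ convergence of $\tfrac12|\na v_i|^2\,\ud x\,\ud t$ to a spatially $k$-symmetric measure on the \emph{whole} space (not just on $P_1$), so that its restriction to the smaller ball $P_{1/2}$ still makes sense and still carries the symmetry — this uses that spatial $k$-symmetry of a Radon measure is a global notion (invariance of $\mu\llcorner(\R^n\times\R_-)$ under translations in $V$) that automatically passes to any sub-neighborhood, together with Remark \ref{Remarkmubound} guaranteeing the relevant measures have uniformly bounded mass so no mass escapes. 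Everything else is a routine compactness argument of exactly the type used in Remark \ref{decomSkuseSva}.
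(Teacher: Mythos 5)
Your proposal is correct and follows essentially the same argument as the paper's own proof: rescale to unit scale, argue by contradiction with a sequence $\delta_i\to 0^+$, extract a weak${}^*$-limit $\mu$ of the energy densities together with the approximating spatially $k$-symmetric measures $\mu_i$, note that $\mu$ inherits spatial $k$-symmetry (via the same observation as in Remark \ref{decomSkuseSva}), and conclude via the metrizability of the weak${}^*$ topology that $v_i$ is $(k,\va)$-symmetric in $B_{1/2}$ for $i$ large, contradicting the hypothesis. Your extra bookkeeping of the two metrics $\MM_{0^{n,1},1}$ and $\MM_{0^{n,1},1/2}$ and the scaling identity is a mild elaboration of the same compactness argument, not a different route.
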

\begin{proof}
Up to a scaling and a translation, we may assume $ X_0=0^{n,1} $ and $r=1$. Suppose that the result is not true, there exist $ \va>0 $, $ \delta_i\to 0 $, $ \{u_i\}\subset H_{\Lda}(P_2,\cN) $ such that $ u_i $ is spatially $ (k,\delta_i) $-symmetric in $ B_1 $ at $ t=0 $, but is not spatially $ (k,\va) $-symmetric in $ B_{\f{1}{2}} $ at $ t=0 $. Since $ u_i $ is spatially $ (k,\delta_i) $-symmetric in $ B_1 $ at $ t=0 $, there exist  spatially $ k $-symmetric Radon measures $ \{\mu_i\}\subset\M(\R^n\times\R) $ such that $ \MM_{0^{n,1},1}\(\f{1}{2}|\na u_i|^2\ud x\ud t,\mu_i\)<C(n)\delta_i $. Given Proposition \ref{Compactness},  we can assume that up to a subsequence
\be
\f{1}{2}|\na u_i|^2\ud x\ud t\wc^*\mu\quad\text{in }\M(P_1)\label{Converge12P1}
\ee
and
\be
\mu_i\wc^*\mu\quad\text{in }\M(P_1).\label{Converge12P2}
\ee
As in the proof of Proposition \ref{BlowUplimits}\ref{BlowUplimits7}, $ \mu $ must be spatially $ k $-symmetric in $ B_1 $ at $ t=0 $. This implies that for $ i\in\Z_+ $ to be sufficiently large, $ u_i $ must be spatially $ (k,\va) $-symmetric in $ B_{\f{1}{2}} $ at $ t=0 $, which is a contradiction to the assumption on $ u_i $.
\end{proof}
In the same spirit as in Lemma \ref{BigBallSmallBall}, we can prove the following result.
\begin{lem}\label{pinchLem}
Assume $ \Lda>0, X_0=(x_0,t_0)\in P_2 $ and $ u\in H_{\Lda}(P_4,\cN) $. For any $ \va>0 $, there exists $ \ga\in(0,1) $ depending only on $ \va,\Lda,n $, and $ \cN $ such that if $ r\in(0,\ga) $, and $ \cW(u,X_0,r,\ga r)<\ga $, then $ u $ is spatially $ (0,\va) $-symmetric in $ B_r(x_0) $ at $ t_0 $.
\end{lem}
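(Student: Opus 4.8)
The plan is a contradiction-and-compactness argument: negate the statement, blow up at scale $r$, use the almost-monotonicity formula to convert the hypothesized two-scale pinching into pinching between $r$ and a \emph{fixed} fraction of $r$, apply the self-similarity criterion of Proposition \ref{BlowUplimits}\ref{BlowUplimits6} to the blow-up, and conclude that $u$ was in fact spatially $(0,\va)$-symmetric at scale $r$.

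Concretely, suppose the assertion fails for some $\va>0$. As $\ga$ is required to depend only on $\va,\Lda,n,\cN$, there are $\ga_i\to0^+$, suitable solutions $u_i\in H_{\Lda}(P_4,\cN)$, points $X_i=(x_i,t_i)\in P_2$ and radii $r_i\in(0,\ga_i)$ with $\cW(u_i,X_i,r_i,\ga_i r_i)<\ga_i$, yet with $u_i$ not spatially $(0,\va)$-symmetric in $B_{r_i}(x_i)$ at $t_i$. Since $r_i<\ga_i\to0$, we have $r_i\to0^+$, so after a harmless fixed rescaling (to fit the hypotheses of Propositions \ref{Monotonicity} and \ref{BlowUplimits}) I may pass to a subsequence along which $v_i:=T_{X_i,r_i}u_i\wc v$ in $H^1_{\loc}(\R^n\times\R,\R^d)$ and $\f12|\na v_i|^2\ud x\ud t\wc^*\mu=\f12|\na v|^2\ud x\ud t+\nu$ in $\M(\R^n\times\R)$.

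The key, and most delicate, step is to upgrade $\cW(u_i,X_i,r_i,\ga_i r_i)<\ga_i$ — whose lower scale $\ga_i r_i$ shrinks relative to $r_i$ — into the fact that for each \emph{fixed} $\rho_0\in(0,1)$ one has $\cW(u_i,X_i,r_i,\rho_0 r_i)\to0$. To see this I would fix $\rho_0$, note $\ga_i r_i<\rho_0 r_i<r_i<2$ for large $i$, recall that $\Phi(u_i,X_i,\cdot)$ is uniformly bounded on $(0,2)$ by $C(\Lda,n)$ (from Lemma \ref{UniformBound} together with the Gaussian localization; cf. Remark \ref{Remarkmubound}), and apply the almost-monotonicity inequality of Proposition \ref{Monotonicity} on the intervals $[\ga_i r_i,\rho_0 r_i]$ and $[\rho_0 r_i,r_i]$, the factor $\exp(C_1(\cdot))$ and the additive term $C_1(\cdot)$ there being $O(r_i)$ hence negligible. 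This gives $\cW(u_i,X_i,r_i,\rho_0 r_i)\le\cW(u_i,X_i,r_i,\ga_i r_i)+C(\Lda,n)r_i<\ga_i+C(\Lda,n)r_i$ and, symmetrically, $\cW(u_i,X_i,r_i,\rho_0 r_i)\ge-C(\Lda,n)r_i$, whence the claim. This bookkeeping — in particular the uniform $\Phi$-bound — is the one genuine obstacle; the rest is soft.

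Given this, Proposition \ref{BlowUplimits}\ref{BlowUplimits6} (with $R=1$, $r=\rho_0$) shows $v,\nu,\mu$ are backwardly self-similar in $\R^n\times(-4,-\rho_0^2)$ for every $\rho_0\in(0,1)$, hence in $\R^n\times(-4,0)$. Replacing $\mu$ by its homogeneous, genuinely backwardly self-similar extension $\tilde\mu\in\M(\R^n\times\R)$ — which agrees with $\mu$ on $P_1$ — yields a spatially $0$-symmetric measure. Finally, restricting the weak$^*$ convergence to $P_1$ gives $\MM(\f12|\na(T_{X_i,r_i}u_i)|^2\ud x\ud t,\mu)\to0$, and since $\MM$ depends only on the restrictions to $P_1$ we may replace $\mu$ by $\tilde\mu$; thus for large $i$ this $\MM$-distance is $<\va$, i.e. $u_i$ \emph{is} spatially $(0,\va)$-symmetric in $B_{r_i}(x_i)$ at $t_i$, contradicting the choice of $u_i$. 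The only points needing real care are the almost-monotonicity computation above and the routine passage from self-similarity near $\{t=0\}$ to a bona fide competitor measure via the homogeneous extension (along the lines of the weak$^*$-stability statements cited in the paper, e.g. \cite[Lemma 2.27]{FWZ24}).
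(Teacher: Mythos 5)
Your proof is correct and follows the same contradiction/blow-up/self-similarity argument as the paper's own proof (negate, blow up at scale $r_i$, invoke Proposition \ref{BlowUplimits}\ref{BlowUplimits6} to get a backwardly self-similar limit measure $\mu$, then use weak$^*$ convergence in $\M(P_1)$ to contradict the assumed failure of $(0,\va)$-symmetry). Your write-up is in fact more careful than the paper's terse version: you explicitly justify — via the almost-monotonicity of $\Phi$ and its uniform bound — why the hypothesis $\cW(u_i,X_i,r_i,\gamma_ir_i)\to0$ with moving lower scale $\gamma_ir_i$ upgrades to the fixed-ratio pinching $\cW(u_i,X_i,r_i,\rho_0 r_i)\to0$ that Proposition \ref{BlowUplimits}\ref{BlowUplimits6} actually demands, and you spell out the homogeneous self-similar extension $\tilde\mu$ needed to produce a bona fide globally spatially $0$-symmetric competitor, both of which the paper silently elides.
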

\begin{proof}
Without loss of generality, assume that $ X_0=0^{n,1} $ after translation. Suppose not, then there exist $ \va>0 $, $ \ga_i\to 0^+ $, $ r_i\in(0,\ga_i) $, and a sequence $ \{u_i\}\subset H_{\Lda}(P_4,\cN) $ such that $ \cW(u_i,0^{n,1},r_i,\ga_ir_i)<\ga_i $, yet $ u_i $ is not spatially $ (0,\va) $-symmetric in $ B_{r_i} $ at $ t=0 $. 

Define $ v_i=T_{0^{n,1},r_i}u_i $. By Proposition \ref{BlowUplimits}\ref{BlowUplimits6}, there exists a measure $ \mu\in\M(\R^n\times\R) $, backward self-similar in $ \R^n\times(-4,0) $, such that $ \f{1}{2}|\na v_i|^2\ud x\ud t\wc^*\mu $ in $ \M(\R^n\times\R) $.  This implies $ \MM_{0^{n,1},1}(\f{1}{2}|\na v_i|^2\ud x\ud t,\mu)<\f{\va}{2} $for sufficiently large $ i\in\Z_+ $. Hence $ u_i $ is spatially $ (0,\f{\va}{2}) $-symmetric in $ B_{r_i} $ at $ t=0 $. It contradicts the assumption, proving the lemma.
\end{proof}

The following lemma deals with the inclusion property of strata for different scales $ 0\leq r<R\leq 1 $. 
\begin{lem}\label{InculsionDifferentScales}
Let $ \Lda>0 $, $ k\in\Z\cap[0,n] $, $ 0<R\leq\f{1}{2} $, $ t\in(-4,4) $, and $ u\in H_{\Lda}(P_4,\cN) $. For any $ \va>0 $, there exists $ \delta>0 $ depending only on $ \va,\Lda,n $, and $ \cN $ such that for any $ r\in[0,\f{R}{2}) $,
\be
\Sg_{\va;r,R}^k(u,t)\subset\Sg_{\delta;r,2R}^k(u,t)\subset\Sg_{\delta;r,R}^k(u,t).\label{deltarRkut}
\ee
\end{lem}
\begin{proof}
The second inclusion of \eqref{deltarRkut} follows directly from Remark \ref{inclusionSvak}. It remains to show the first one. For $ x\notin\Sg_{\delta,r,2R}^k(u,t) $, there is $ s\in[r,2R] $ such that $ u $ is spatially $ (k+1,\delta) $-symmetric in $ B_s(x) $ at $ t $. Choose $ \delta=\delta(\va,\Lda,n,\cN)\in(0,\va) $ as in Lemma \ref{BigBallSmallBall}. If $ s\in[r,2r] $, since $ \delta<\va $, we have $ x\notin\Sg_{\va;r,R}^k(u,t) $. On the other hand, for the case that $ s\in[2r,2R] $, it follows from Lemma \ref{BigBallSmallBall} that $ u $ is spatially $ (k+1,\va) $-symmetric in $ B_{\f{s}{2}}(x) $ at $ t $, showing that $ x\notin\Sg_{\va;r,R}^k(u,t) $ since $ \f{s}{2}\in[r,R] $. We complete the proof.
\end{proof}

We conclude this section with the following two lemmas; the first is a quantitative version of Proposition \ref{SgnSgn2}. 

\begin{lem}\label{FinaLemma}
Let $ \Lda>0, X_0=(x_0,t_0)\in P_2 $ and $ u\in H_{\Lda}(P_4,\cN) $. There exists $ \va>0 $, depending only on $ \Lda,n $, and $ \cN $ such that for $ r\in(0,\va) $, if $ u $ is spatially $ (n-1,\va) $-symmetric in $ B_r(x_0) $ at $ t_0 $, then $ r_u(X_0)\geq\va r $, where $ r_u(\cdot) $ denotes the regular scale given in Definition \ref{RegularScale}. In particular,
\be
\Sg_{\va;0,\va}^{n-2}(u,t_0)\cap (B_1\times\{t_0\})=\sing(u)\cap(B_1\times\{t_0\}).\label{FinallLemmaproperty}
\ee
\end{lem}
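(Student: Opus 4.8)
The statement is the quantitative counterpart of Proposition~\ref{SgnSgn2}, and the plan is to prove it by a blow-up/contradiction argument parallel to the proof of that proposition. Suppose no such $\va$ exists. Then for every $i\in\Z_+$ there are $u_i\in H_{\Lda}(P_4,\cN)$, $X_{0,i}=(x_{0,i},t_{0,i})\in P_2$ and $r_i\in(0,\f1i)$ such that $u_i$ is spatially $(n-1,\f1i)$-symmetric in $B_{r_i}(x_{0,i})$ at $t_{0,i}$, yet $r_{u_i}(X_{0,i})<\f1i\,r_i$. Translating, we may assume $X_{0,i}=0^{n,1}$. Put $v_i:=T_{0^{n,1},r_i}u_i$; since $r_i\to0^+$, Lemma~\ref{UniformBound} gives a uniform local energy bound for the $v_i$, so by Proposition~\ref{BlowUplimits}, up to a subsequence, $v_i\wc v$ in $H^1_{\loc}(\R^n\times\R,\R^d)$ and $\f12|\na v_i|^2\ud x\ud t\wc^*\mu:=\f12|\na v|^2\ud x\ud t+\nu$ in $\M(\R^n\times\R)$. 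The $(n-1,\f1i)$-symmetry furnishes spatially $(n-1)$-symmetric $\mu_i\in\M(\R^n\times\R)$, each backwardly invariant with respect to some $V_i\in\bG(n,n-1)$, with $\MM(\f12|\na v_i|^2\ud x\ud t,\mu_i)<\f1i$; hence $\mu_i\llcorner P_1\wc^*\mu\llcorner P_1$, and after a further subsequence $V_i\to V\in\bG(n,n-1)$. Using that the $\mu_i$ are backwardly self-similar (so their restrictions to $P_1$ determine them on every backward parabolic region) together with the stability of backward self-similarity and of backward $V$-invariance under weak-$*$ limits (as in \cite[Section~3]{LW02b} and \cite[Lemma~2.27]{FWZ24}), one obtains that $\mu\llcorner(\R^n\times\R_-)$ is backwardly self-similar and backwardly invariant with respect to $V$.

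Next I would push this symmetry onto $v$. Backward self-similarity and backward $V$-invariance of $\mu$ make $\Phi(\mu,(x,0),\rho)$, defined in \eqref{Phimudef}, constant in $\rho>0$ and in $x\in V$; feeding this into the monotonicity inequality \eqref{Phimu} of Proposition~\ref{BlowUplimits}\ref{BlowUplimits8} forces $(y-x)\cdot\na v+2s\,\pa_s v=0$ for a.e.\ $(y,s)$ with $s<0$ and every $x\in V$, so $v$ is backwardly self-similar at $0^{n,1}$ and backwardly invariant with respect to $V$ (Lemma~\ref{ConeSplittingLem}, Corollary~\ref{ConeSplittingCor}); consequently $\nu=\mu-\f12|\na v|^2$ is backwardly $V$-invariant, too. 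Thus $\Sg=\sing(v)\cup\supp\nu$ from Proposition~\ref{BlowUplimits} is, for negative times, invariant under translations by $V$. Since $\dim V=n-1$, a nonempty slice $\Sg\cap\{s\}$ with $s<0$ would contain a whole $(n-1)$-plane, contradicting the uniform $\HH^{n-2}$-bound of Proposition~\ref{BlowUplimits}\ref{BlowUplimits4}; hence $\Sg\cap(\R^n\times\R_-)=\emptyset$, so $\nu\llcorner(\R^n\times\R_-)=0$ and $v$ is smooth on $\R^n\times\R_-$. There $v$ is smooth, backwardly self-similar and backwardly $V$-invariant, so for $s<0$ it has the form $v(y,s)=\psi(\eta_0/\sqrt{-s})$ with $\eta_0$ the coordinate of $y$ along a unit vector in $V^{\perp}$ and $\psi:\R\to\cN$ smooth, solving the quasi-harmonic $1$-sphere equation $\psi''-\f{\xi}{2}\psi'+A(\psi)(\psi',\psi')=0$; taking the component tangent to $\cN$ gives $\f{\ud}{\ud\xi}|\psi'|^2=\xi|\psi'|^2$, whence $|\psi'(\xi)|^2=|\psi'(0)|^2e^{\xi^2/2}$, and since $v\in H^1_{\loc}$ has finite energy on $B_1\times(-1,0)$ this forces $\psi'(0)=0$, i.e.\ $v$ is constant on $\R^n\times\R_-$. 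Therefore $\mu\llcorner(\R^n\times\R_-)=0$, so $E_-(v_i,0^{n,1},1)=\int_{B_1\times(-1,0)}|\na v_i|^2\to0$; by Lemma~\ref{UniformBound} and Lemma~\ref{Regularscaleresult} we get $r_{v_i}(0^{n,1})\geq c$ for $i$ large, where $c=c(\Lda,n,\cN)>0$, and unraveling the parabolic rescaling, $r_{u_i}(X_{0,i})=r_i\,r_{v_i}(0^{n,1})\geq c\,r_i$. For $i>1/c$ this contradicts $r_{u_i}(X_{0,i})<\f1i r_i$, proving the first assertion.

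For \eqref{FinallLemmaproperty}: if $X_0\in\sing(u)$ with $x_0\in B_1$, the first assertion shows that $u$ is not spatially $(n-1,\va)$-symmetric in $B_r(x_0)$ at $t_0$ for any $r\in(0,\va)$, i.e.\ $x_0\in\Sg_{\va;0,\va}^{n-2}(u,t_0)$; hence $\sing(u)\cap(B_1\times\{t_0\})\subseteq(\Sg_{\va;0,\va}^{n-2}(u,t_0)\cap B_1)\times\{t_0\}$. Conversely, $\Sg_{\va;0,\va}^{n-2}(u,t_0)\subseteq\Sg^{n-2}(u,t_0)$ by Remark~\ref{decomSkuseSva}, while Proposition~\ref{SgnSgn2} together with $\Sg^n(u)=\sing(u)$ identifies $(\Sg^{n-2}(u,t_0)\cap B_1)\times\{t_0\}$ with $\sing(u)\cap(B_1\times\{t_0\})$; this gives the reverse inclusion, hence equality.

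The main obstacle is the last step of the first paragraph: since the hypothesis only compares $\f12|\na v_i|^2$ with $\mu_i$ on $P_1$ while the $\mu_i$ are globally self-similar, one must use this global self-similarity to upgrade the weak-$*$ convergence on $P_1$ to control on all backward parabolic balls and then check that backward self-similarity and backward $V$-invariance survive the passage to the limit (this is exactly what makes the resulting limit measure $\mu$ behave, on $\R^n\times\R_-$, like a genuine tangent measure). Granted this, the remaining steps are those of the proof of Proposition~\ref{SgnSgn2}; note also that, unlike in Theorem~\ref{ImprovementRegularity}, no assumption on $\cN$ is needed here because the relevant quasi-harmonic spheres are one-dimensional and the energy identity $|\psi'(\xi)|^2=|\psi'(0)|^2e^{\xi^2/2}$ rules out nonconstant ones automatically.
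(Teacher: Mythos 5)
Your proof is correct and follows the same route as the paper's: a contradiction/blow-up argument producing a limit measure $\mu$ that is spatially $(n-1)$-symmetric, concluding $\mu\llcorner(\R^n\times\R_-)=0$ and then using the partial regularity theory (Corollary~\ref{PartialRegularityCor} / Lemma~\ref{Regularscaleresult}) to contradict $r_{u_i}(0^{n,1})<\va_i r_i$. The paper's own proof is terser — it simply invokes ``we deduce from Proposition~\ref{SgnSgn2} that $\mu\llcorner(\R^n\times\R_-)=0$'' — whereas you unfold the underlying cone-splitting and $\HH^{n-2}$-slice argument and, notably, spell out the quasi-harmonic $1$-sphere ODE $|\psi'(\xi)|^2=|\psi'(0)|^2e^{\xi^2/2}$ showing the smooth self-similar $(n-1)$-invariant limit map is constant, a step the paper asserts without justification inside the proof of Proposition~\ref{SgnSgn2}; this also cleanly explains your closing remark that no extra hypothesis on $\cN$ is needed here (in contrast to Lemma~\ref{FinaLemma2}). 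The derivation of \eqref{FinallLemmaproperty} agrees with the paper's, up to reversing the order of the two inclusions.
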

\begin{proof}
Without loss of generality, we let $ X_0=0^{n,1} $. Suppose that the claim fails. Then there exist a sequence $ \{u_i\}\subset H_{\Lda}(P_4,\cN) $, $ r_i\to 0^+ $, and $ \va_i\to 0^+ $ such that $ u_i $ is spatially $ (n-1,\va_i) $-symmetric in $ B_{r_i} $ at $ t=0 $, and $ r_{u_i}(0^{n,1})<\va_ir_i $. Let $ v_i=T_{0^{n,1},r_i}u_i $. Up to a subsequence, we have that $
\f{1}{2}|\na v_i|^2\ud x\ud t\wc^*\mu $ in $ \M(\R^n\times\R) $. Since each $ u_i $ is spatially $ (n-1,\va_i) $-symmetric in $ B_{r_i} $ at $ t=0 $, there are spatially $ (n-1) $-symmetric Radon measures $ \{\mu_i\}\subset\M(\R^n\times\R) $, satisfying
\[
\MM_{0^{n,1},1}\(\f{1}{2}|\na(T_{0^{n,1},r_i}u_i)|^2\ud x\ud t,\mu_i\)<\va_i.
\]
After taking a limit, $ \mu $ must be spatially $ (n-1) $-symmetric. As a result, we deduce from Proposition \ref{SgnSgn2} that $ \mu\llcorner(\R^n\times\R_-)=0 $. Applying Corollary \ref{PartialRegularityCor} and noting that $ r_i\to 0^+ $, there exists a constant $ c_0=c_0(\Lda,n,\cN)>0 $ such that for $ i\in\Z_+ $ being sufficiently large, we have that $ r_{u_i}(0^{n,1})\geq c_0r_i $. It contradicts the assumption $ r_{u_i}(0^{n,1})\leq \va_ir_i $ with $ \va_i\to 0^+ $. 

To show the equality \eqref{FinallLemmaproperty}, we first note that the right-hand side contains the left-hand side, due to \eqref{SgkB2}. On the other hand, if $ x\notin\Sg_{\va;0,\va}^{n-2}(u,0)\cap B_1 $, then there exists $ s\in(0,\va] $ such that $ u $ is spatially $ (n-1,\va) $-symmetric in $ B_s(x) $ at $ t=0 $. As a result, $ r_u((x,0))\geq \va s $, implying that $ (x,0)\notin\sing(u) $. Then we obtain \eqref{FinallLemmaproperty}.
\end{proof}
\begin{rem}
The property \eqref{FinallLemmaproperty} is the main reason we introduce the upper scale of quantitative spatial stratifications, as it is not clear whether $ \Sg_{\va;0,\va}^{n-2}(u,t_0) $ can be replaced by $ \Sg_{\va;0}^{n-2}(u,t_0) $.
\end{rem}
\begin{rem}
In the process of proving Lemma \ref{FinaLemma},  we have used the vanishing of spatially $ (n-1) $-symmetric tangent measures. 
\end{rem}
Finally, we provide an estimate of the regular scale under additional assumptions on the target manifold.

\begin{lem}\label{FinaLemma2}
Let $ \Lda>0, X_0=(x_0,t_0)\in P_2 $ and $ u\in H_{\Lda}(P_4,\cN) $.  If the target manifold $ \cN $ does not admit harmonic $ 2 $-spheres and quasi-harmonic $ 2 $-spheres, then there exist $ \va>0 $, depending only on $ \Lda,n $, and $ \cN $ such that if $ r\in(0,\va) $, and $ u $ is spatially $ (n-2,\va) $-symmetric in $ B_r(x_0) $ at $ t_0 $, then $ r_u(X_0)\geq\va r $.
\end{lem}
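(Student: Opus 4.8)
The plan is to run the same contradiction scheme as in the proof of Lemma~\ref{FinaLemma}, but replacing the appeal to Proposition~\ref{SgnSgn2} by a direct argument that, under the two hypotheses on $\cN$, spatially $(n-2)$-symmetric tangent measures of $u$ must vanish on $\R^n\times\R_-$. After translating so that $X_0=0^{n,1}$, suppose the claim fails: there are $\{u_i\}\subset H_{\Lda}(P_4,\cN)$, $r_i\to 0^+$ and $\va_i\to 0^+$ with $u_i$ spatially $(n-2,\va_i)$-symmetric in $B_{r_i}$ at $t=0$, yet $r_{u_i}(0^{n,1})<\va_i r_i$. Set $v_i:=T_{0^{n,1},r_i}u_i$ and, after extracting a subsequence via Proposition~\ref{BlowUplimits}, get $v_i\wc v$ in $H_{\loc}^1(\R^n\times\R,\R^d)$ and $\f{1}{2}|\na v_i|^2\ud x\ud t\wc^*\mu=\f{1}{2}|\na v|^2\ud x\ud t+\nu$. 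Since $\cN$ admits no harmonic $2$-spheres, Proposition~\ref{BlowUplimits}\ref{BlowUplimits5} gives $\nu=0$ (strong $H_{\loc}^1$-convergence) and $\cP^n(\sing v)=0$. The spatially $(n-2)$-symmetric measures witnessing the $\va_i$-symmetry of $v_i$ converge, after a further subsequence and after letting their $(n-2)$-planes converge to some $V\in\bG(n,n-2)$, to $\mu$; hence $\mu$ is backwardly self-similar and backwardly invariant with respect to $V$. Because $\nu=0$, the monotonicity inequality \eqref{Phimu} together with Lemma~\ref{ConeSplittingLem} and Corollary~\ref{ConeSplittingCor}, applied exactly as in the proof of Proposition~\ref{SgnSgn2} (the argument there for $(n-1)$-planes works verbatim for the $(n-2)$-plane $V$), upgrades this to: $v$ itself is backwardly self-similar and backwardly invariant with respect to $V$.

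Next I would show $\sing(v)\cap(\R^n\times\R_-)=\emptyset$. From $\cP^n(\sing v)=0$ and the standard co-area estimate for the parabolic Hausdorff measure (the one yielding \eqref{StratificaionResults2}) we obtain $\HH^{n-2}(\sing(v)\cap\{t\})=0$ for a.e.\ $t<0$; backward self-similarity of $v$ makes $\sing(v)\cap\{t<0\}$ invariant under the parabolic dilations $(x,t)\mapsto(\lda x,\lda^2 t)$, so this vanishing holds for \emph{every} $t<0$. But $\sing(v)\cap\{t\}$ is invariant under translation by the $(n-2)$-dimensional subspace $V$, and a $V$-invariant subset of $\R^n$ of vanishing $\HH^{n-2}$-measure must be empty (a point $p$ in it would force the whole affine plane $p+V$ to lie in it). Hence $v$ is smooth on $\R^n\times\R_-$; being backwardly self-similar and backwardly $V$-invariant there, it has the form $v(x,t)=w\bigl(x^{\perp}/\sqrt{-t}\bigr)$ for $t<0$, where $x^{\perp}$ is the component of $x$ in $V^{\perp}\cong\R^2$ and $w\colon\R^2\to\cN$ is smooth with finite Gaussian-weighted energy. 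A direct computation (as in \cite{Str88,LW02b}) shows that $v$ solving \eqref{heatflow} forces $w$ to be a critical point of $\int_{\R^2}|\na w|^2\exp(-|\xi|^2/4)\ud\xi$, i.e.\ a quasi-harmonic map on $\R^2$. Since $\cN$ admits no quasi-harmonic $2$-spheres, $w$ is constant, so $\na v\equiv0$ on $\R^n\times\R_-$ and $\mu\llcorner(\R^n\times\R_-)=0$.

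With $\mu\llcorner(\R^n\times\R_-)=0$ in hand the proof closes as in Lemma~\ref{FinaLemma}: weak${}^*$ convergence (together with the uniform bounds of Lemma~\ref{UniformBound} to absorb the contribution of a thin slice $\{-\delta<t<0\}$) gives $\lim_i E_-(u_i,0^{n,1},r_i)=\lim_i E_-(v_i,0^{n,1},1)=0$, so Corollary~\ref{PartialRegularityCor} applied with radius $r_i/2$ produces $c_0=c_0(\Lda,n,\cN)>0$ with $r_{u_i}(0^{n,1})\ge c_0 r_i$ for all large $i$, contradicting $r_{u_i}(0^{n,1})<\va_i r_i$ and $\va_i\to0^+$. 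This establishes the regularity-scale bound, which is the full content of the lemma.

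I expect the main obstacle to be the second paragraph. One must keep careful track that the hypothesis ``$\cN$ has no harmonic $2$-spheres'' is used twice — once to force the defect measure $\nu$ to vanish, and once, through $\cP^n(\sing v)=0$, to exclude a persistent singular set (a priori a union of $V$-planes, or a ``parabolic cone'' $V\times(\text{curve})$) for $t<0$ — and one must genuinely \emph{prove}, rather than assume, that the resulting self-similar profile $v$ is smooth across $\{x^{\perp}=0\}$ for $t<0$, so that the identification of $w$ with a quasi-harmonic map from $\R^2$ is legitimate. If one prefers to avoid the dimension-reduction step, the alternative is to blow $v$ up at a point of the putative singular axis to obtain a static, finite-energy, spatially $(n-2)$-symmetric tangent flow whose $\R^2$-slice is a nonconstant harmonic map, hence (after a removable-singularity and bubbling analysis) a harmonic $2$-sphere in $\cN$, again contradicting the hypothesis.
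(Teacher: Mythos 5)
Your proposal is correct and follows exactly the paper's strategy: a contradiction argument where the absence of harmonic $2$-spheres forces $\nu=0$ (strong $H^1_{\loc}$ convergence), the spatial $(n-2,\va_i)$-symmetry of $u_i$ passes to the limit to make $v$ spatially $(n-2)$-symmetric, the absence of quasi-harmonic $2$-spheres then kills $\mu\llcorner(\R^n\times\R_-)$, and Corollary~\ref{PartialRegularityCor} closes the contradiction as in Lemma~\ref{FinaLemma}. Your second paragraph — establishing smoothness of $v$ on $\R^n\times\R_-$ via $\cP^n(\sing v)=0$, the a.e.\ coarea bound, self-similarity, and the observation that a $V$-invariant $\HH^{n-2}$-null set ($\dim V=n-2$) is empty — is a correct filling-in of a step the paper states without elaboration; an alternative, slightly shorter route there is to note that the $V$-invariant, self-similar weak solution $v$ reduces to a weakly quasi-harmonic map $w\colon\R^2\to\cN$ and to invoke two-dimensional regularity of weakly harmonic maps to get smoothness of $w$ directly, but the two routes are equivalent in substance.
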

\begin{proof}
We choose $ X_0=0^{n,1} $, $ \va_i,r_i,u_i $, and $ v_i $ the same as in the proof of Lemma \ref{FinaLemma}. Since $ \cN $ does not admit harmonic $ 2 $-spheres, it follows from Proposition \ref{BlowUplimits}\ref{BlowUplimits5} that 
\[
v_i\to v\text{ strongly in }H_{\loc}^1(\R^n\times\R,\cN)\]
and
\[\f{1}{2}|\na v_i|^2\ud x\ud t \wc^*\mu:=\f{1}{2}|\na v|^2\ud x\ud t\text{ in } \M(\R^n\times\R).
\]
Recalling that $ u_i $ is spatially $ (n-2,\va_i) $-symmetric in $ B_{r_i} $ at $ t=0 $, we see that $ v $ is spatially $ (n-2) $-symmetric. Since $ \cN $ does not admit any quasi-harmonic $ 2 $-spheres, we obtain $ \mu\llcorner(\R^n\times\R_-)=0 $. By repeating the arguments in the proof of Lemma \ref{FinaLemma}, we get a contradiction.
\end{proof}

\section{Main covering lemma} \label{CoveringLemmas}
In this section, we provide several covering results related to quantitative spatial stratification, which serve as essential tools for proving the main theorems.

\begin{prop}\label{MainCoveringCor}
Let $ \va,\Lda>0 $, $ k\in\Z\cap[0,n-1] $, $ R\in(0,\f{1}{10}) $, and $ X_0=(x_0,t_0)\in P_2 $. Assume that $ u\in H_{\Lda}(P_4,\cN) $. Then, there exists $ \eta>0 $, depending only on $ \va,\Lda,n $, and $ \cN $ such that when $ r\in(0,\eta^2) $, the following properties hold.
\begin{enumerate}[label=$(\theenumi)$]
\item There is a collection of balls $ \{B_{Rr}(y)\}_{y\in\cC} $ such that
\be
\Sg_{\va;\eta Rr}^k(u,t_0)\cap B_r(x_0)\subset\bigcup_{y\in\cC}B_{Rr}(y).\label{Sgvaetasubset}
\ee
\item We have the estimate $ (\#\cC)R^k\leq C $, where $ C>0 $ depends only on $ \va,\Lda,n $, and $ \cN $.
\end{enumerate}
\end{prop}

Proposition \ref{MainCoveringCor} will be a consequence of the following covering result.

\begin{lem}[Main covering]\label{MainCovering}
Let $ \va,\Lda>0 $, $ k\in\Z\cap[0,n-1] $, $ R\in(0,1) $, and $ X_0=(x_0,t_0)\in P_2 $. There exists $ \eta\in(0,\f{1}{10}) $, depending only on $ \va,\Lda,n $, and $ \cN $ such that if $ u\in H_{\Lda}(P_4,\cN) $ and $ r\in(0,\eta^2) $, then the following properties hold. 

There is a collection of balls $ \{B_{r_y}(y)\}_{y\in\cC} $ with $ \cC\subset\Sg_{\va;\eta Rr}^k(u,t_0)\cap B_r(x_0) $, $ r_y\in[Rr,r) $ for any $ y\in\cC $, and 
\[
\Sg_{\va;\eta Rr}^k(u,t_0)\cap B_r(x_0)\subset\bigcup_{y\in\cC}B_{r_y}(y).
\]
Moreover, the following properties are satisfied.
\begin{enumerate}[label=$(\theenumi)$]
\item There exists $ C_{\op{M}}>0 $, depending only on $ \va,\Lda,n $, and $ \cN $ such that $ \sum_{y\in\cC}r_y^k\leq C_{\op{M}}r^k $.
\item For any $ y\in\cC $, either $ r_y=Rr $ or
\be
\sup_{z\in B_{2r_y}(y)}\Phi(u,(z,t_0),2r_y)\leq E-\f{\eta}{3}\quad\text{with}\quad E=\sup_{z\in B_{2r}(x_0)}\Phi(u,(z,t_0),2r).\label{Edefinition}
\ee
\end{enumerate}
\end{lem}
The proof of Lemma \ref{MainCovering} will be postponed to Section \ref{ProofofCoverSection}. In the rest of this section, we give the proof of Proposition \ref{MainCoveringCor} assuming Lemma \ref{MainCovering}.
\begin{proof}[Proof of Proposition \ref{MainCoveringCor} assuming Lemma \ref{MainCovering}]
For a sufficiently small $ \eta=\eta(\va,\Lda,n,\cN)>0 $ and $ r\in(0,\eta^2) $, we construct a collection of balls $ \{B_{r_y}(y)\}_{y\in\cC_i}:=\{B_{r_y}(y)\}_{y\in\cC_i^{(1)}\cup \cC_i^{(2)}} $ inductively, ensuring that
\be
\Sg_{\va;\eta Rr}^k(u,t_0)\cap B_r(x_0)\subset\bigcup_{y\in\cC_i^{(1)}}B_{r_y}(y)\cup\bigcup_{y\in\cC_i^{(2)}}B_{r_y}(y).\label{CoveringInductive}
\ee
The collection of balls satisfies the following properties.
\begin{enumerate}[label=$(\op{M}\theenumi)$]
\item\label{M1} $ \cC_i\subset\Sg_{\va;\eta Rr}^k(u,t_0)\cap B_r(x_0) $.
\item\label{M2} If $ y\in\cC_i^{(1)} $, then $ r_y=Rr $.
\item\label{M3} If $ y\in\cC_i^{(2)} $, then $ r_y>Rr $ and
\be
\sup_{z\in B_{2r_y}(y)}\Phi(u,(z,t_0),2r_y)\leq E-\f{i\eta}{3},\label{EnergyDrop}
\ee
where $ E $ is given by \eqref{Edefinition}.
\item\label{M4} For $ C_{\op{M}}>0 $, the constant from Lemma \ref{MainCovering}, we have
\be
\sum_{y\in\cC_i}r_y^k\leq (1+C_{\op{M}})^ir^k.\label{EstimateInductive}
\ee
\end{enumerate}

 By Lemma \ref{ComparisonEandPhi} and Lemma \ref{UniformBound}, $ E\leq C(\va,\Lda,n,\cN) $. The energy drop condition \eqref{EnergyDrop} ensures the process terminates after finitely many steps. Specifically, there exists $ i_0=i_0(\va,\Lda,n,\cN)\in\Z_+ $ such that $ E-\f{i\eta}{3}<0 $ for $ i\in\Z_{\geq i_0} $. Now we check that the collection $ \{B_{r_y}(y)\}_{y\in\cC_{i_0}} $ satisfies the desired properties.
 \begin{itemize}
 \item The inclusion in \eqref{Sgvaetasubset} follows from the inductive covering \eqref{CoveringInductive}.
 \item The estimate $ (\#\cC)R^k\leq C(\va,\Lda,n,\cN) $  follows from the bound \eqref{EstimateInductive} at the step $ i=i_0 $. 
\end{itemize}
Hence $ \{B_{r_y}(y)\}_{y\in\cC_{i_0}} $ is a covering satisfying Proposition \ref{MainCoveringCor}.
 
It remains to complete the inductive arguments. For $ i=1 $, properties \ref{M1}-\ref{M4} follow directly from applying Lemma \ref{MainCovering} to $ B_r(x_0) $. Assuming that \ref{M1}-\ref{M4} hold for some $ i\in\Z_+ $, we construct the next step $ i+1 $. For any $ \zeta\in\cC_i^{(2)} $, we have $ r_{\zeta}>Rr $. Applying Lemma \ref{MainCovering} to $B_{r_{\zeta}}(\zeta)$ with $R$ replaced by $R' = \frac{Rr}{r_{\zeta}} \in (0,1)$ provides a collection $\{B_{r_y}(y)\}_{y \in \cC_{\zeta,i}}$ with $\cC_{\zeta,i} \subset \Sg_{\va;\eta Rr}^k(u,t_0) \cap B_{r_{\zeta}}(\zeta)$ such that
\[
\Sg_{\va;\eta Rr}^k(u,t_0)\cap B_{r_{\zeta}}(\zeta)\subset\bigcup_{y\in\cC_{\zeta,i}}B_{r_y}(y)=\bigcup_{y\in\cC_{\zeta,i}^{(1)}}B_{r_y}(y)\cup\bigcup_{y\in\cC_{\zeta,i}^{(2)}}B_{r_y}(y).
\]
Moreover, if $ y\in\cC_{\zeta,i}^{(1)} $, then $ r_y=Rr $, and if $ y\in\cC_{\zeta,i}^{(2)} $, then
\[
\sup_{z\in B_{2r_y}(y)}\Phi(u,(z,t_0),2r_y)\leq\sup_{z\in B_{2r_{\zeta}}(\zeta)}\Phi(u,(z,t_0),2r_{\zeta})-\f{\eta}{3}\leq E-\f{(i+1)\eta}{3}.
\]
Also, we have the estimate
\be
\sum_{y\in\cC_{\zeta,i}}r_y^k\leq C_{\op{M}}(\va,\Lda,n,\cN)r_{\zeta}^k.\label{CxiCx2}
\ee
Define $ \{\cC_{i+1}^{(j)}\}_{j=1,2} $ as
\[
\cC_{i+1}^{(1)}=\cC_i^{(1)}\cup\bigcup_{y\in \cC_i^{(2)}}\cC_{y,i}^{(1)},\quad\cC_{i+1}^{(2)}=\bigcup_{y\in \cC_i^{(2)}}\cC_{y,i}^{(2)},\quad\text{and}\quad\cC_{i+1}=\cC_{i+1}^{(1)}\cup\cC_{i+1}^{(2)}.
\]
According to \eqref{CxiCx2} and \ref{M4} for $ i $, 
\begin{align*}
\sum_{y\in\cC_{i+1}}r_y^k&\leq\sum_{y\in\cC_i^{(1)}}r_y^k+\sum_{\zeta\in\cC_i^{(2)}}\sum_{y\in\cC_{\zeta,i}}r_y^k\leq (1+C_{\op{M}})\(\sum_{y\in\cC_i}r_y^k\)\leq (1+C_{\op{M}})^{i+1}r^k,
\end{align*}
which completes the proof.
\end{proof}

\section{Reifenberg-type theorems} \label{Reifenbergtype}

In this section, we present the first ingredient in the proof of Lemma \ref{MainCovering}, which is the Reifenberg-type theorems proved by Naber-Valtorta in \cite{NV17}.

\begin{defn}\label{displacementk}
Let $ k\in\Z\cap[0,n-1] $, $ 0<r\leq 1 $, and $ U\subset\R^n $ be a bounded open set. Assume that $ \mu $ is a finite Radon measure on $ U $, that is, $ \mu(U)<+\ift $. For $ x_0\in U $ and $ 0<r<\dist(x_0,\pa U) $, we define the $ k $-dimensional displacement as
\[
D_{\mu}^k(x_0,r):=\min_{L\in\bA(n,k)}\(r^{-k-2}\int_{B_r(x_0)}\dist^2(y,L)\ud\mu(y)\).
\]
\end{defn}

The following theorem provides Reifenberg-type estimates for a finite sum of Dirac measures with distinct weights.

\begin{thm}[\cite{NV17}, Theorem 3.4]\label{Rei1}
Let $ k\in\Z\cap[0,n-1] $, $ 0<r\leq 1 $, and $ x_0\in\R^n $. Assume that $ \{B_{r_y}(y)\}_{y\in\cD}\subset B_{2r}(x_0) $ is a collection of pairwise disjoint balls with $ \cD\subset B_r(x_0) $. Define 
\[
\mu:=\sum_{y\in\cD}\w_kr_y^k\delta_y,
\]
where $ \w_k $ denotes the volume of a $ k $-dimensional unit ball. There exist $ \delta_{\op{R}},C_{\op{R}}>0 $, depending only on $ n $ such that if
\be
\int_{B_t(x)}\(\int_0^tD_{\mu}^k(y,s)\f{\ud s}{s}\)\ud\mu(y)<\delta_{\op{R}}t^k\label{Reicon}
\ee
for any $ B_t(x)\subset B_{2r}(x_0) $ with $ t>0 $, then $
\mu(B_r(x_0))\leq C_{\op{R}}r^k $.
\end{thm}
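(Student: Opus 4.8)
This is precisely the discrete Reifenberg theorem of \cite{NV17}, and the plan is to follow their argument. The strategy has three movements: approximate $\supp\mu$ at every dyadic scale by affine $k$-planes; glue these approximations into a single Reifenberg-flat $k$-dimensional submanifold $T$ with $\HH^k(T)$ controlled; and then use the pairwise disjointness of the balls $\{B_{r_y}(y)\}$ to estimate $\mu(B_r(x_0))=\sum_{y\in\cD}\w_kr_y^k$ by a fixed multiple of $\HH^k(T)$. After rescaling we may assume $r=1$ and $x_0=0^n$, so everything lives in $B_2$ and we want $\mu(B_1)\le C(n)$. A preliminary stopping-time/covering step peels off the ``degenerate'' region where $\mu(B_s(y))\ll s^k$ and handles it by downward induction on the scale, so one may assume $\mu$ is roughly $k$-Ahlfors regular on the balls appearing below.

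\emph{Plane selection, tilting, and approximating manifolds.} For $x\in B_1$ and each dyadic scale $r_j=2^{-j}$, pick $V_{x,j}\in\bA(n,k)$ nearly realizing the minimum defining $D_\mu^k(x,r_j)$. A linear-algebra lemma gives $\angle(V_{x,j},V_{x',j+1})^2\lesssim r_j^{-k}\big(D_\mu^k(x,Cr_j)+D_\mu^k(x',Cr_j)\big)$ whenever $x,x'$ are $Cr_j$-close and $\mu$ is non-degenerate on the relevant balls; this is the mechanism by which \eqref{Reicon} constrains the geometry. One builds submanifolds $T_j$ inductively: $T_{-1}=V_{0^n,0}$, and given $T_j$ one takes a Vitali cover $\{B_{r_j}(x_i)\}$ of $\supp\mu\cap B_{3/2}$ with a subordinate partition of unity $\{\vp_i\}$ and lets $\sigma_j$ be the map that on $B_{r_j}(x_i)$ is a smoothed orthogonal projection onto $V_{x_i,j}$, patched together via the $\vp_i$ and equal to $\id$ away from a neighborhood of $\supp\mu$; set $T_{j+1}=\sigma_j(T_j)$. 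The $C^0$- and $C^1$-sizes of $\sigma_j-\id$ on the active region are bounded by $\big(\sup_i r_j^{-k}D_\mu^k(x_i,Cr_j)\big)^{1/2}$ plus the tilting above. The decisive step is the summation: using that the balls of the Vitali covers carry pairwise disjoint portions of $\mu$, and that \eqref{Reicon} is a Carleson-type bound on $\int D_\mu^k\,\ud\mu$ over every ball, one gets $\sum_j(\mathrm{error}_j)^2\le C(n)\delta_{\op{R}}$. Hence, for $\delta_{\op{R}}$ small, $T:=\lim_j T_j$ exists, is a $(1+C\delta_{\op{R}})$-bi-Lipschitz image of a $k$-plane near $B_1$, is Reifenberg-flat, and (the gluing maps having Jacobian close to $1$) satisfies $\HH^k(T\cap B_{3/2})\le(1+C\delta_{\op{R}})\w_k(3/2)^k\le C(n)$. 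This summation is the main obstacle: it is the technical heart of \cite{NV17}, where the smallness of $\delta_{\op{R}}$ and the disjointness hypothesis are genuinely used, and where the non-degeneracy of $\mu$ on the cover balls must be tracked simultaneously.

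\emph{Trapping $\mu$ and using disjointness.} Each $y\in\supp\mu\cap B_1$ stays within $C\delta_{\op{R}}^{1/2}r_j$ of $T_j$ for every $j$, so $\dist(y,T)$ is small compared to the local scale; let $\pi$ denote nearest-point projection of a thin neighborhood of $T$ onto $T$, which is Lipschitz with constant $1+C\delta_{\op{R}}$ and has fibers of small diameter. For each $y\in\cD$ the set $\pi\big(B_{r_y/4}(y)\cap\{\dist(\cdot,T)<C\delta_{\op{R}}r_y\}\big)\subset T$ has $\HH^k$-measure at least $c(n)r_y^k$, and since the balls $B_{r_y}(y)$ are pairwise disjoint while $\pi$ is almost $1$-Lipschitz with thin fibers, these subsets of $T$ have $\HH^k$-bounded overlap. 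Therefore
\[\sum_{y\in\cD}\w_kr_y^k\le C(n)\sum_{y\in\cD}\HH^k\big(\pi\big(B_{r_y/4}(y)\cap\{\dist(\cdot,T)<C\delta_{\op{R}}r_y\}\big)\big)\le C(n)\HH^k(T\cap B_2)\le C_{\op{R}},\]
and undoing the normalization gives $\mu(B_r(x_0))\le C_{\op{R}}r^k$, as claimed. The first and third movements are comparatively soft linear algebra and packing, so the genuine difficulty is concentrated in the manifold construction and the summation of its errors against the Carleson hypothesis \eqref{Reicon}.
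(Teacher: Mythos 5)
The paper does not give a proof of this statement at all: it is recorded verbatim as a cited result, namely \cite[Theorem 3.4]{NV17}, and is used as a black box (together with its discrete variant, Corollary \ref{ReifenbergRem}) in the covering arguments of Sections \ref{L2best} and \ref{CoveringLemmas}. There is therefore no ``paper's own proof'' to compare against; the appropriate move for the paper's purposes is exactly what it does, which is to cite Naber--Valtorta.

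Your sketch is a faithful high-level outline of the Naber--Valtorta argument: rescale, select best-approximating $k$-planes at each dyadic scale, control their tilting via the $\beta$-numbers $D_\mu^k$, glue the planes into a sequence of approximating $k$-manifolds $T_j\to T$ whose Hausdorff measure is controlled by the Carleson condition \eqref{Reicon}, and then bound $\sum_y \w_k r_y^k$ by $\HH^k(T)$ using almost-$1$-Lipschitz projection and the disjointness of the $B_{r_y}(y)$. You also correctly flag that the genuinely hard step is the square-summation of the $C^1$-errors of the gluing maps against the Carleson hypothesis, and that one must simultaneously track the non-degeneracy (Ahlfors lower bound) of $\mu$ on the cover balls; this is where essentially all of the length of \cite{NV17}'s proof is spent. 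One small imprecision worth noting: the pairwise disjointness of the balls $\{B_{r_y}(y)\}_{y\in\cD}$ is not used to make ``the Vitali covers carry disjoint portions of $\mu$'' in the summation step, but rather to guarantee the packing/upper-density bound on $\mu$ that makes the final projection-and-overlap count work; the summability of the errors comes from \eqref{Reicon} alone. As a sketch your proposal is sound, but it is an outline of \cite{NV17}'s proof rather than an independent argument, and the technical core (the manifold construction and error summation) is deferred to that reference, exactly as the paper itself does.
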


This theorem also has a discrete version, described in the following.

\begin{cor}[\cite{NV17}, Remark 3.11]\label{ReifenbergRem}
The result is still true if we replace the condition \eqref{Reicon} by
\[
\sum_{r_i\leq 2r} \int_{B_r(x)}D_{\mu}^k(y, r_i)\ud\mu(y)<\delta_{\op{R}}'r^k,
\]
where $ r_i=2^{-i}r $ and $ \delta_{\mathrm{R}}'>0 $ depends only on $ n $.
\end{cor}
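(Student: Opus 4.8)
The plan is to deduce this discrete version directly from the continuous Theorem \ref{Rei1}, by checking that the discrete hypothesis implies condition \eqref{Reicon} after adjusting the dimensional constant. Throughout I read the displayed discrete hypothesis exactly as \eqref{Reicon} is read, namely as being assumed for every ball $B_t(x)\subset B_{2r}(x_0)$, with the dyadic radii $r_i=2^{-i}t$ anchored at the radius $t$ of the ball being tested.

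The only substantive ingredient is an elementary monotonicity-type estimate for the displacement: if $0<s\le\rho$ and $L\in\bA(n,k)$ realizes $D_\mu^k(y,\rho)$, then enlarging the domain of integration from $B_s(y)$ to $B_\rho(y)$ in Definition \ref{displacementk} gives
$$
D_\mu^k(y,s)\le s^{-k-2}\int_{B_\rho(y)}\dist^2(z,L)\,\ud\mu(z)=\Big(\tfrac{\rho}{s}\Big)^{k+2}D_\mu^k(y,\rho).
$$
Applying this on each dyadic annulus $s\in[r_{i+1},r_i]$ with $\rho=r_i$ and integrating against $\ud s/s$ yields
$$
\int_{r_{i+1}}^{r_i}D_\mu^k(y,s)\,\frac{\ud s}{s}\le D_\mu^k(y,r_i)\int_{r_{i+1}}^{r_i}\Big(\tfrac{r_i}{s}\Big)^{k+2}\frac{\ud s}{s}=\frac{2^{k+2}-1}{k+2}\,D_\mu^k(y,r_i),
$$
and summing over $i\ge0$ (the intervals $[r_{i+1},r_i]$ tile $(0,t]$) gives, with $c(n):=2^{n+2}\ge\frac{2^{k+2}-1}{k+2}$ since $k\le n$,
$$
\int_0^tD_\mu^k(y,s)\,\frac{\ud s}{s}\le c(n)\sum_{r_i\le 2t}D_\mu^k(y,r_i).
$$

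Next I would integrate this pointwise bound against $\mu$ over an arbitrary $B_t(x)\subset B_{2r}(x_0)$ and interchange the countable sum with the integral (Tonelli, the integrand being nonnegative), so that the discrete hypothesis gives
$$
\int_{B_t(x)}\Big(\int_0^tD_\mu^k(y,s)\,\frac{\ud s}{s}\Big)\ud\mu(y)\le c(n)\sum_{r_i\le 2t}\int_{B_t(x)}D_\mu^k(y,r_i)\,\ud\mu(y)<c(n)\,\delta_{\op{R}}'\,t^k.
$$
Hence, setting $\delta_{\op{R}}':=\delta_{\op{R}}/c(n)$ — still depending only on $n$ — condition \eqref{Reicon} holds for every $B_t(x)\subset B_{2r}(x_0)$, and Theorem \ref{Rei1} applies verbatim to give $\mu(B_r(x_0))=\sum_{y\in\cD}\w_kr_y^k\le C_{\op{R}}r^k$.

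I do not expect a genuine obstacle: the proof is essentially the observation that a dyadic Riemann-type sum controls the corresponding logarithmic integral. The only point that needs attention is the scale bookkeeping — the dyadic radii in the hypothesis must be based at the radius of the ball under consideration, so that the annuli exactly partition $(0,t]$ and the comparison above loses nothing; with any other fixed anchoring of the dyadic grid one still obtains the result, but only after absorbing a further harmless dimensional constant comparing the two grids.
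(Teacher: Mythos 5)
The paper offers no proof of this corollary at all: it simply cites \cite[Remark~3.11]{NV17}, so there is nothing in the manuscript to compare your argument against line by line. What you supply is a self-contained derivation of the discrete form from the continuous Theorem~\ref{Rei1}, and it is correct. The central monotonicity estimate $D_\mu^k(y,s)\le(\rho/s)^{k+2}D_\mu^k(y,\rho)$ for $s\le\rho$ follows exactly as you say from Definition~\ref{displacementk} (test the minimizing plane for scale $\rho$ in the infimum at scale $s$ and enlarge the domain of integration), the dyadic integral $\int_{r_{i+1}}^{r_i}(r_i/s)^{k+2}\,\ud s/s=(2^{k+2}-1)/(k+2)$ is computed correctly, and Tonelli justifies the interchange of sum and integral since everything is nonnegative. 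Setting $\delta_{\op{R}}'=\delta_{\op{R}}/c(n)$ then reduces the discrete hypothesis at every tested ball to \eqref{Reicon}, so Theorem~\ref{Rei1} applies.

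One remark on the literature and one on the bookkeeping. In \cite{NV17} the logical order is actually reversed: the authors prove the discrete, scale-by-scale Reifenberg estimate directly by induction, and the ``continuous'' formulation is the corollary. Your argument goes the other way, showing the two hypotheses are equivalent up to a dimensional factor, which is a perfectly legitimate (and arguably cleaner) way to see that Corollary~\ref{ReifenbergRem} follows from Theorem~\ref{Rei1} as quoted in the paper. On the bookkeeping: you flagged the anchoring of the dyadic grid yourself, and your observation is right that a fixed grid $r_i=2^{-i}r$ versus one anchored at the tested scale $t$ only costs another factor bounded by $2^{k+2}$. The displayed statement in the paper is a little loose in reusing the symbol $r$ both for the outer radius and in ``$\delta_{\op{R}}'r^k$''; your reading — that the bound is required for every $B_t(x)\subset B_{2r}(x_0)$ with $t^k$ on the right — is the intended one and is exactly how Lemma~\ref{ReifenbergLemma} invokes the corollary, where it is verified at all dyadic sub-balls $B_{r_i}(x)$. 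There is no gap.
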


The following theorem provides a criterion for the rectifiability of sets.

\begin{thm}[\cite{AT15}, Corollary 1.3]\label{Rei2}
Let $ S\subset\R^n $ be a $ \HH^k $-measurable set. $ S $ is rectifiable if and only if for $ \HH^k $-a.e. $ x\in S $,
\[
\int_0^1D_{\HH^k\llcorner S}^k(x,s)\f{\ud s}{s}<+\ift.
\]
\end{thm}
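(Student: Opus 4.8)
Since Theorem~\ref{Rei2} is a deep characterization due to Tolsa and Azzam--Tolsa which we only invoke here, I merely sketch the strategy one would follow to prove it. Writing $\mu:=\HH^k\llcorner S$, the first observation is that $D_\mu^k(x,r)$ is precisely the squared $L^2$-Jones $\beta$-number $\beta_{\mu,2}^k(x,r)^2$. Both implications would reduce---via the Besicovitch--Federer density theory, the stability of rectifiability and of $\HH^k$-a.e.\ finiteness of the square function under passing to subsets, and the monotonicity of $D_\mu^k$ under the addition of mass---to the case where $\mu$ is a finite measure with suitably controlled densities, which I assume henceforth.

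\emph{Necessity.} If $S$ is $k$-rectifiable then, up to an $\HH^k$-null set, $S\subset\bigcup_i\graph(A_i)$ with each $A_i\colon V_i\to V_i^\perp$ Lipschitz; since deleting a null set leaves $\HH^k\llcorner S$ unchanged, it suffices to bound $\int_0^1 D_{\HH^k\llcorner\Gamma}^k(x,s)\,\f{\ud s}{s}$ for $\HH^k$-a.e.\ $x$ on a single graph $\Gamma=\graph(A)$. Choosing the competitor plane at $(x,s)$ to be the graph of the first-order Taylor polynomial of $A$ at the projection of $x$, one dominates $D_{\HH^k\llcorner\Gamma}^k(x,s)$ by $C(\Lip A)$ times the mean-square deviation of $A$ from affine maps on $B(x,s)$; Dorronsoro's square-function estimate $\int_{V}\!\int_0^\infty\Omega_2 A(x,s)^2\,\f{\ud s}{s}\,\ud x\lesssim\|\na A\|_{L^2}^2$ and Fubini then yield the desired finiteness for $\HH^k$-a.e.\ $x$.

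\emph{Sufficiency.} This is the substantial direction, and the one I would spend the most effort on. Assume $\int_0^1 D_\mu^k(x,s)\,\f{\ud s}{s}<\ift$ for $\mu$-a.e.\ $x$. By Chebyshev's inequality, for every $\va>0$ one produces a compact set $F\subset S$ with $\mu(S\setminus F)<\va$, a scale $r_0>0$, and $M<\ift$ such that $\int_0^{r_0}D_{\mu\llcorner F}^k(x,s)\,\f{\ud s}{s}\le M$ for all $x\in F$; it suffices to show each such $F$ is rectifiable. For this I would run a corona/stopping-time decomposition feeding into the Reifenberg \emph{parametrization} theorem of David--Toro and Azzam--Tolsa---the strengthening of Theorem~\ref{Rei1} in which the mass bound is upgraded to a bi-Lipschitz chart: if a measure obeys the uniform Carleson smallness $\int_{B_t(x)}\!\int_0^t D_\mu^k(y,s)\,\f{\ud s}{s}\,\ud\mu(y)\le\delta t^k$ on every ball, then its support lies inside a single bi-Lipschitz image of $\R^k$ of constant $1+C(n)\sqrt{\delta}$ carrying all but an arbitrarily small fraction of $\mu$. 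Fixing $\delta$, one calls a dyadic cube \emph{good} when the Carleson sum of $D_{\mu\llcorner F}^k$ over it is below $\delta$ and \emph{bad} otherwise; the a.e.\ finiteness of the square function forces a Carleson packing bound on the bad cubes, so that, after tuning the parameters, the bad part carries arbitrarily little mass while each maximal good tree is covered by the parametrization theorem and is therefore rectifiable. Exhausting $F$, and then $S$, finishes the proof.

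\emph{Main obstacle.} The hard part will be the Reifenberg parametrization: assembling, from the scale-wise best-approximating affine planes $L_{x,s}$, a global Lipschitz---indeed bi-Lipschitz---chart whose image captures almost all of $\mu$, with Lipschitz constant controlled by $\sqrt{\delta}$. This needs a multiscale interpolation showing that consecutive planes $L_{x,s}$ tilt by a summable amount, a stopping-time mechanism to discard the portion of $\mu$ too diffuse to lie near any plane, and a delicate non-folding estimate for the resulting map. By comparison, the reductions, the appeal to Dorronsoro, and the Carleson packing in the corona step are fairly routine.
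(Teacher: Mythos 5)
The paper does not prove this statement at all: it is imported verbatim as \cite[Corollary 1.3]{AT15} (together with Theorem \ref{Rei1} from \cite{NV17}) and used as a black box, so there is no internal proof to compare your proposal against. Your sketch is a reasonable high-level outline of the actual Tolsa / Azzam--Tolsa argument: the identification of $D_\mu^k(x,r)$ with the squared $L^2$ Jones number $\beta_{\mu,2}^k(x,r)^2$ is exact given Definition \ref{displacementk}, the necessity direction does go through Dorronsoro-type square-function estimates on Lipschitz graphs (this is the ``part I'' half of the characterization), and the sufficiency direction in \cite{AT15} is indeed a stopping-time/corona construction feeding into a Reifenberg-type parametrization.

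Two caveats if you intended this as more than a sketch. First, in the necessity reduction, restricting $\mu=\HH^k\llcorner S$ to a subset \emph{decreases} $D_\mu^k$ (more mass can only increase the minimized integral), which is the wrong direction for deducing the bound for the full measure from bounds on the individual graph pieces; handling the mass contributed by the other pieces inside $B_s(x)$ is exactly the non-routine part of Tolsa's part I. Second, the David--Toro parametrization theorem does not apply under the Carleson smallness condition alone: one needs an additional geometric hypothesis (Reifenberg flatness of the support, or lower density/separated-net conditions), and the absence of lower density bounds is precisely why Azzam--Tolsa's stopping-time argument is delicate. Neither point affects the paper, which only needs the statement as cited.
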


\section{\texorpdfstring{$ L^2 $}{}-best approximation estimates}\label{L2best}

To apply the theorems in Section \ref{Reifenbergtype}, we need to establish the connection between the difference of densities $ \Phi $ at two scales, as defined by \eqref{Difference}, and the displacements outlined in Definition \ref{displacementk}. The proposition highlighted in the introduction represents a key innovation in our paper. It is the first to connect Reifenberg-type results with harmonic map flows.

\begin{prop}\label{L2BestProp}
Let $ \va,\Lda>0 $, $ k\in\Z\cap[0,n-1] $, $ r\in(0,\f{1}{4}) $, and $ X_0=(x_0,t_0)\in P_2 $. Assume that $ u\in H_{\Lda}(P_4,\cN) $. There exists $ \delta>0 $, depending only on $ \va,\Lda,n $, and $ \cN $ such that if for some $ r\in(0,\delta) $, $ u $ is spatially $ (0,\delta) $-symmetric in $ B_{2r}(x_0) $ at $ t_0 $ and is not spatially $ (k+1,\va) $-symmetric in $ B_{2r}(x_0) $ at $ t_0 $, then for any $ \mu\in\M(B_r(x_0)) $ with $ \mu(B_r(x_0))<+\ift $, it holds that
\[
D_{\mu}^k(x_0,r)\leq\f{C}{r^k}\int_{B_r(x_0)}\left[\cW\(u,(y,t_0),2r,\f{r}{2}\)+r\right]\ud\mu(y),
\]
where $ C>0 $ depends only on $ \va,\Lda,n $, and $ \cN $.
\end{prop}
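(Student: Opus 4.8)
The plan is to follow the second-moment (Reifenberg-type) strategy of \cite{NV17}, transplanted to the localized parabolic density $\Phi$ and to the compactness package of Proposition \ref{BlowUplimits}. After a translation we may assume $X_0=0^{n,1}$, and since both sides are positively $1$-homogeneous in $\mu$ we may normalize $\mu$ freely. First I would diagonalize the second moment of $\mu$ on $B_r$: let $\bar y=\mu(B_r)^{-1}\int_{B_r}y\,\ud\mu(y)$ and let $\bb$ be the bilinear form $\bb(w,w')=\int_{B_r}((y-\bar y)\cdot w)((y-\bar y)\cdot w')\,\ud\mu(y)$, with eigenvalues $\lda_1\ge\cdots\ge\lda_n\ge 0$ and orthonormal eigenvectors $v_1,\dots,v_n$. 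The minimizing $k$-plane in Definition \ref{displacementk} is $\bar y+\op{span}\{v_1,\dots,v_k\}$, giving $D_\mu^k(0,r)=r^{-k-2}\sum_{i>k}\lda_i$. If $\lda_{k+1}=0$ there is nothing to prove (the right-hand side is nonnegative once $\delta$ is small, by the almost-monotonicity of $\Phi$ in Proposition \ref{Monotonicity}); otherwise it suffices to bound $\lda_{k+1}$, so I set $V=\op{span}\{v_1,\dots,v_{k+1}\}\in\bG(n,k+1)$.

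Next I would convert the density drop into an $L^2$-quantity for $\na u$. Applying Proposition \ref{Monotonicity} at the point $(p,0)$, $p\in B_r$ — choosing $\delta$ small enough that $\vp_p\equiv1$, the exponential factor is $\le 1+Cr$, and all slabs $T_\rho(p,0)$, $\rho\le2r$, lie in the domain — gives $\cW(u,(p,0),2r,\tfrac r2)+r\gtrsim W_u(p):=\int_{r/2}^{2r}\!\int_{T_\rho(p,0)}|t|^{-1}|(x-p)\cdot\na u+2t\,\pa_tu|^2G_{(p,0)}\,\ud x\,\ud t\,\tfrac{\ud\rho}{\rho}$, with constant depending only on $\Lda,n$. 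Restricting to a fixed backward cylinder $\Omega_r=B_{2r}\times(-4r^2,-2r^2)\subset P_{2r}$ on which $G_{(p,0)}\gtrsim r^{-n}$ uniformly over $p\in B_r$, integrating in $p$ against $\mu$, exchanging the order of integration, and using the algebraic identity $\int_{B_r}|(x-p)\cdot\na u+2t\,\pa_tu|^2\,\ud\mu(p)=\mu(B_r)|(x-\bar y)\cdot\na u+2t\,\pa_tu|^2+\sum_i\lda_i|v_i\cdot\na u|^2\ge\lda_{k+1}|V\cdot\na u|^2$ (the linear-in-$p$ cross term vanishing by the choice of $\bar y$), I obtain
\be
\int_{B_r}\big[\cW(u,(p,0),2r,\tfrac r2)+r\big]\,\ud\mu(p)\ \gtrsim\ r^{-n-2}\,\lda_{k+1}\int_{\Omega_r}|V\cdot\na u|^2\,\ud x\,\ud t.\nn
\ee
The extra term $2t\,\pa_tu$, absent in the static setting of \cite{NV17}, enters exactly as in the backward self-similarity operator and so does not disturb this computation.

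The step I expect to be the main obstacle is a non-degeneracy (``$\va$-rigidity'') lemma that replaces the missing minimizing property: there exist $c_\star,\delta>0$ depending only on $\va,\Lda,n,\cN$ such that if $u\in H_\Lda(P_4,\cN)$ is spatially $(0,\delta)$-symmetric but not spatially $(k+1,\va)$-symmetric in $B_{2r}(x_0)$ at $t_0$, with $r<\delta$, then $\int_{\Omega_r}|W\cdot\na u|^2\ge c_\star r^n$ for every $W\in\bG(n,k+1)$. Taking $W=V$, this and the previous display give $\lda_{k+1}\lesssim r^2\int_{B_r}[\cW(u,(p,0),2r,r/2)+r]\,\ud\mu$, hence $D_\mu^k(0,r)=r^{-k-2}\sum_{i>k}\lda_i\le(n-k)r^{-k-2}\lda_{k+1}\le\tfrac{C}{r^k}\int_{B_r}[\cW(u,(y,0),2r,r/2)+r]\,\ud\mu(y)$, as claimed. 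I would prove the lemma by contradiction and compactness, in the spirit of Section \ref{QuantitativeStratification} and of \cite{EE19}: from a failing sequence $u_j$, $r_j\downarrow0$, $W_j\to W$ with $r_j^{-n}\int_{\Omega_{r_j}}|W_j\cdot\na u_j|^2\to0$, blow up via Proposition \ref{BlowUplimits} to $v_j=T_{X_j,r_j}u_j\wc v$, $\tfrac12|\na v_j|^2\,\ud x\,\ud t\wc^*\mu=\tfrac12|\na v|^2\,\ud x\,\ud t+\nu$; the gradient hypothesis is precisely that of Proposition \ref{BlowUplimits}\ref{BlowUplimits7}, so $v,\nu,\mu$ are backwardly invariant with respect to $W$ on a region, which upgrades to full backward invariance via the backward self-similarity, so $\ud(\mu)\ge k+1$; the spatial $(0,\delta_j)$-symmetry with $\delta_j\to0$ and the stability of backward self-similarity under weak${}^*$ limits forces $\mu$ backwardly self-similar (cf.\ Remark \ref{decomSkuseSva}); hence $\mu$ is spatially $(k+1)$-symmetric, contradicting the limiting form of the non-$(k+1,\va)$-symmetry of $u_j$ (continuity of $\MM$ along weak${}^*$-convergent sequences).

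Apart from the rigidity lemma, the remaining work is routine bookkeeping: checking that the loss of scaling invariance of $\Phi$ — the cutoff $\vp$, the factor $\exp(C_1(R-r))$, and the additive $C_1(R-r)$ error in Proposition \ref{Monotonicity} — is absorbed by the additive $+r$ on the right-hand side (this is what forces $r$ below a threshold $\delta$), together with the elementary facts that the optimal $k$-plane passes through the $\mu$-barycenter and that $G_{(p,0)}\gtrsim r^{-n}$ on $\Omega_r$.
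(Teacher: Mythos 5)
Your proposal is correct and follows essentially the same approach as the paper: the rigidity lemma you isolate is exactly Lemma \ref{LemGeqdelta}, proved by the same blow-up compactness argument via Proposition \ref{BlowUplimits}\ref{BlowUplimits7}, and your second-moment/centering computation combined with the monotonicity formula matches the content of Lemmas \ref{NVlem7.5}--\ref{VnablauLeqTwoPhi}. The only cosmetic difference is that you expand $\int_{B_r}|(x-p)\cdot\nabla u+2t\,\partial_t u|^2\,\ud\mu(p)$ directly and drop the nonnegative barycentric term, whereas the paper applies Cauchy--Schwarz to the centered eigenvector identity of Lemma \ref{NVlem7.5} one eigendirection at a time and then sums over $i\le k+1$; both exploit the same orthogonality of centered moments and yield the same bound on $\lambda_{k+1}$.
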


\subsection{Preparations for the proof of Proposition \ref{L2BestProp}}

For a point $ x_0 \in \mathbb{R}^n $ and a finite Radon measure $ \mu $ on $ B_r(x_0) $, we define the center of mass of $ \mu $ as
\be
x_{\op{cm}}:=x_{\op{cm}}(\mu)=\dashint_{B_r(x_0)}y\ud\mu(y).\label{centermass}
\ee

\begin{defn}\label{defvj}
We inductively define the sequence $ \{(\lda_i,v_i)\}_{i=1}^n\subset\R_{\geq 0}\times\mathbb{S}^{n-1} $ as follows. Set
\[
\lda_1:=\lda_1(\mu):=\max_{|v|^2=1}\dashint_{B_r(x_0)}|(y-x_{\op{cm}})\cdot v|^2\ud\mu(y),
\]
and let $ v_1:=v_1(\mu) $ be a unit vector satisfying:
\be
\lda_1=\dashint_{B_r(x_0)}|(y-x_{\op{cm}})\cdot v_1|^2\ud\mu(y).\label{ldaiform1}
\ee
Given $ \{(\lda_j,v_j)\}_{j=1}^i $, we define $ (\lda_{i+1},v_{i+1})\in\R_{\geq 0}\times\mathbb{S}^{n-1} $ so that
\be
\lda_{i+1}:=\max_{\substack{|v|^2=1,\,\,v\cdot v_j=0,\\j\in\Z\cap[1,i]}}\dashint_{B_r(x_0)}|(y-x_{\op{cm}})\cdot v|^2\ud\mu(y)=\dashint_{B_r(x_0)}|(y-x_{\op{cm}})\cdot v_{i+1}|^2\ud\mu(y).\label{ldaiform2}
\ee
\end{defn}
According to standard variational theories, $ \{v_i\}_{i=1}^n $ is an orthonormal basis of $ \R^n $, and 
\be
\lda_1\geq\lda_2\geq...\geq\lda_n\geq 0.\label{ldaorder}
\ee
Using this sequence $ \{(\lda_i,v_i)\}_{i=1}^n $, we can express $ D_{\mu}^k(x_0,r) $ through the following lemma.

\begin{lem}[\cite{NV17}, Lemma 7.4]\label{NVlem7.4} 
Let $ \{(\lda_i,v_i)\}_{i=1}^n $ be given in Definition \ref{defvj}. If $ \mu $ is a Radon measure on $ B_r(x_0) $ with $ \mu(B_r(x_0))<+\ift $, then for any $ k\in\Z\cap[1,n] $, 
\be
\min_{L\in\bA(n,k)}\dashint_{B_r(x_0)}\dist^2(y,L)\ud\mu(y)=\dashint_{B_r(x_0)}\dist^2(y,L_k)\ud\mu(y)=\sum_{i=k+1}^n\lda_i,\label{mnkrepre}
\ee
where $ L_k:=x_{\op{cm}}+\op{span}\{v_i\}_{i=1}^k\in\bA(n,k) $, and $ \sum_{i=n+1}^n\lda_i=0 $ as a convention.
\end{lem}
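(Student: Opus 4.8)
The plan is to recognize Lemma \ref{NVlem7.4} as the classical principal-component (spectral) statement about best $L^2$ approximation by affine subspaces, and to prove it in two stages: first reduce the optimization over all $L\in\bA(n,k)$ to an optimization over $k$-dimensional \emph{linear} subspaces passing through the center of mass $x_{\op{cm}}$, then identify the optimizer through the eigenstructure of the second-moment (covariance) matrix of $\mu$.

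For the first stage I would write an arbitrary $L\in\bA(n,k)$ as $L=p+W$ with $W$ a $k$-dimensional linear subspace and $\pi:=\pi_{W^\perp}$ the orthogonal projection onto $W^\perp$, so that $\dist^2(y,L)=|\pi(y-p)|^2$. Decomposing $y-p=(y-x_{\op{cm}})+(x_{\op{cm}}-p)$ and integrating, the cross term vanishes since $\dashint_{B_r(x_0)}(y-x_{\op{cm}})\,\ud\mu(y)=0$ by the definition \eqref{centermass} of $x_{\op{cm}}$; hence
\[
\dashint_{B_r(x_0)}\dist^2(y,L)\,\ud\mu(y)=\dashint_{B_r(x_0)}|\pi(y-x_{\op{cm}})|^2\,\ud\mu(y)+|\pi(x_{\op{cm}}-p)|^2,
\]
which for fixed $W$ is minimized exactly when $x_{\op{cm}}-p\in W$, i.e. when $x_{\op{cm}}\in L$. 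Thus it suffices to minimize $W\mapsto\dashint|\pi_{W^\perp}(y-x_{\op{cm}})|^2\,\ud\mu$ over $k$-dimensional linear subspaces $W$, equivalently to maximize $\dashint|\pi_W(y-x_{\op{cm}})|^2\,\ud\mu$, since the sum of the two equals the $W$-independent quantity $\dashint|y-x_{\op{cm}}|^2\,\ud\mu$.

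For the second stage I would introduce the symmetric positive semidefinite matrix $Q$ defined by $Qv\cdot w=\dashint_{B_r(x_0)}((y-x_{\op{cm}})\cdot v)((y-x_{\op{cm}})\cdot w)\,\ud\mu(y)$. For any orthonormal basis $\{w_1,\dots,w_k\}$ of $W$ one has $\dashint|\pi_W(y-x_{\op{cm}})|^2\,\ud\mu=\sum_{i=1}^{k}Qw_i\cdot w_i=\tr(\pi_WQ\pi_W)$. The scalars $\lambda_1\geq\dots\geq\lambda_n$ and vectors $v_1,\dots,v_n$ of Definition \ref{defvj} are precisely the eigenvalues and an orthonormal eigenbasis of $Q$: $\lambda_1=\max_{|v|=1}Qv\cdot v$ is the largest eigenvalue with eigenvector $v_1$, and inductively, since $Q$ maps $\op{span}\{v_1,\dots,v_i\}^\perp$ into itself (symmetry of $Q$ together with $v_1,\dots,v_i$ being eigenvectors), the maximizer $v_{i+1}$ of the Rayleigh quotient over that orthogonal complement is an eigenvector of the restricted operator, hence of $Q$, with eigenvalue $\lambda_{i+1}$. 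By the Courant--Fischer/Ky Fan variational principle, $\max_{\dim W=k}\tr(\pi_WQ\pi_W)=\sum_{i=1}^{k}\lambda_i$, attained at $W=\op{span}\{v_1,\dots,v_k\}$. Consequently the minimum in \eqref{mnkrepre} equals $\dashint|y-x_{\op{cm}}|^2\,\ud\mu-\sum_{i=1}^{k}\lambda_i=\tr Q-\sum_{i=1}^{k}\lambda_i=\sum_{i=k+1}^{n}\lambda_i$, and it is realized by $L_k=x_{\op{cm}}+\op{span}\{v_1,\dots,v_k\}$ from \eqref{Vkmu}, which is the asserted identity.

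The argument is essentially routine linear algebra; the only point requiring care is the inductive verification that the greedy construction in Definition \ref{defvj} produces genuine eigenvectors of $Q$ (equivalently, that the greedily selected $k$-plane is a global optimizer rather than merely a critical point), and this is exactly where the $Q$-invariance of $\op{span}\{v_1,\dots,v_i\}^\perp$, hence the Courant--Fischer characterization, enters. Alternatively, one may simply invoke the statement directly from \cite{NV17}.
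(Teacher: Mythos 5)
Your proof is correct and is the standard principal-component / spectral argument: reduce to affine planes through the center of mass (cross terms cancel), pass to the covariance operator $Q$, identify the greedy Rayleigh-quotient directions $v_i$ as a genuine eigenbasis of $Q$ (this is exactly the content of the paper's Lemma \ref{NVlem7.5}), and invoke the Ky Fan / Courant--Fischer trace-maximization characterization. The paper itself gives no proof and simply cites \cite{NV17}, whose argument is of the same flavor, so there is nothing to compare; no gaps.
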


From Definition \ref{displacementk} and the above results, it follows that
\be
D_{\mu}^k(x_0,r)=\f{\mu(B_r(x_0))}{r^{k+2}}\(\sum_{i=k+1}^n\lda_i\)\leq \f{(n-k)\lda_{k+1}}{r^{k+2}}\mu(B_r(x_0)).\label{DmuxrRepresentation}
\ee

\begin{lem}[\cite{NV17}, Lemma 7.5]\label{NVlem7.5}
For $ \{(\lda_i,v_i)\}_{i=1}^n $ as in Definition \ref{defvj} and a Radon measure $ \mu $ on $ B_r(x_0) $ with $ \mu(B_r(x_0))< +\ift $, we have
\be
\dashint_{B_r(x_0)}((y-x_{\op{cm}})\cdot v_i)(y-x_{\op{cm}})\ud\mu(y)=\lda_iv_i\label{El}
\ee
for any $ i\in\Z\cap[1,n] $.
\end{lem}

\subsection{Proof of Proposition \ref{L2BestProp}}

In this subsection, we will give the proof of  Proposition \ref{L2BestProp}. First, we use Lemma \ref{NVlem7.5} and Proposition \ref{Monotonicity} to derive a crucial result to prove the $ L^2 $-best approximation estimate.

\begin{lem}\label{VnablauLeqTwoPhi}
Let $ \Lda>0 $, $ r\in(0,\f{1}{4}) $ and $ X_0=(x_0,t_0)\in P_2 $. Assume that $ u\in H_{\Lda}(P_4,\cN) $ and $ \{(\lda_i,v_i)\}_{i=1}^n $ are given by \eqref{defvj}. Then, there exists $ C>0 $, depending only on $ \Lda $ and $ n $ such that for any $ i\in\Z\cap[1,n] $,
\begin{align*}
&\lda_i\(\frac{1}{r^{n}}\int_{t_0-4r^2}^{t_0-2r^2}\int_{B_r(x_0)}|v_i\cdot\na u|^2\ud z\ud s\)\leq\f{Cr^2}{\mu(B_r(x_0))}\int_{B_r(x_0)}\left[\cW\(u,(y,t_0),2r,\f{r}{2}\)+r\right]\ud\mu(y).
\end{align*}
\end{lem}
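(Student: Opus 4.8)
The plan is to bridge the monotonicity formula (Proposition~\ref{Monotonicity}) with the ``principal component'' identities of Lemma~\ref{NVlem7.5}; the point of contact is the parabolic cone quantity $(z-y)\cdot\na u(z,s)+2(s-t_0)\pa_su(z,s)$, whose weighted space-time integral is controlled by monotonicity and whose $\mu$-average reconstructs $\lda_i\,v_i\cdot\na u$. (One may assume $\lda_i>0$, the inequality being trivial otherwise.) First, for each fixed $y\in B_r(x_0)$ I would apply the second inequality of Proposition~\ref{Monotonicity} at the center $(y,t_0)$ with scales $r/2$ and $2r$ (admissible since $2r<\f12<2$), and restrict the $\rho$-integral on its right-hand side to $\rho\in[r,\sqrt{2}\,r]\subset[r/2,2r]$. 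For these $\rho$ one has $T_{\rho}((y,t_0))\supset\R^n\times[t_0-4r^2,t_0-2r^2]$; on this slab $|t_0-s|\in[2r^2,4r^2]$, and for $z\in B_r(x_0)$ one has $|z-y|<2r<1$, hence $\vp_y(z)=1$ and $G_{(y,t_0)}(z,s)\geq c(n)r^{-n}$. Since $\int_r^{\sqrt{2}\,r}\f{\ud\rho}{\rho}$ is a dimensional constant, these bounds turn the monotonicity inequality into
$$
\int_{t_0-4r^2}^{t_0-2r^2}\int_{B_r(x_0)}\big|(z-y)\cdot\na u(z,s)+2(s-t_0)\pa_su(z,s)\big|^2\,\ud z\,\ud s\leq Cr^{n+2}\big[\Phi(u,(y,t_0),2r)\exp(Cr)-\Phi(u,(y,t_0),r/2)+r\big],
$$
with $C=C(\Lda,n)$ absorbing $C_1,C_2$ from Proposition~\ref{Monotonicity} (in particular $\exp(C_1\f{3r}{2})\leq\exp(Cr)$ and the additive term $C_1\f{3r}{2}$).

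Next, for fixed $(z,s)$ I would apply the linear map $\na u(z,s)$ to the identity $\lda_iv_i=\dashint_{B_r(x_0)}\big((y-x_{\op{cm}})\cdot v_i\big)(y-x_{\op{cm}})\,\ud\mu(y)$ of Lemma~\ref{NVlem7.5}, then split $y-x_{\op{cm}}=(y-z)+(z-x_{\op{cm}})$ and write $(y-z)\cdot\na u(z,s)=-\big[(z-y)\cdot\na u(z,s)+2(s-t_0)\pa_su(z,s)\big]+2(s-t_0)\pa_su(z,s)$. The two terms independent of $y$ integrate to zero because $\dashint_{B_r(x_0)}(y-x_{\op{cm}})\,\ud\mu(y)=0$ by \eqref{centermass}, which leaves the pointwise identity
$$
\lda_i\,v_i\cdot\na u(z,s)=-\dashint_{B_r(x_0)}\big((y-x_{\op{cm}})\cdot v_i\big)\big[(z-y)\cdot\na u(z,s)+2(s-t_0)\pa_su(z,s)\big]\,\ud\mu(y).
$$
Applying Cauchy-Schwarz in the normalized measure $\ud\mu/\mu(B_r(x_0))$ and using $\dashint_{B_r(x_0)}|(y-x_{\op{cm}})\cdot v_i|^2\,\ud\mu(y)=\lda_i$ from \eqref{ldaiform}, this gives the pointwise bound $\lda_i|v_i\cdot\na u(z,s)|^2\leq\dashint_{B_r(x_0)}|(z-y)\cdot\na u(z,s)+2(s-t_0)\pa_su(z,s)|^2\,\ud\mu(y)$. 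Integrating over $z\in B_r(x_0)$ and $s\in[t_0-4r^2,t_0-2r^2]$, interchanging integrals by Tonelli, inserting the displayed monotonicity bound above for each $y$, and dividing by $r^n$ yields exactly the asserted inequality.

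The hard part will be the first step: the monotonicity formula produces a weighted space-time integral over the nested family $\{T_{\rho}\}$, and one must carefully (i) choose a $\rho$-window in which the \emph{fixed} slab $B_r(x_0)\times[t_0-4r^2,t_0-2r^2]$ is contained in every $T_{\rho}((y,t_0))$, (ii) bound $G_{(y,t_0)}$ and $|t_0-s|^{-1}$ from below on that slab uniformly in $y\in B_r(x_0)$, and (iii) verify that $\vp_y\equiv1$ there, which is where the smallness $r<\f14$ enters. The remaining steps are the now-standard principal-component / $L^2$-best-approximation mechanism of \cite{NV17}, adapted here to the parabolic cone quantity $(z-y)\cdot\na u+2(s-t_0)\pa_su$ rather than its elliptic analogue $(z-y)\cdot\na u$.
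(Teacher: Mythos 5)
Your proof is correct and follows essentially the same strategy as the paper's: apply $\na u(z,s)$ to the Lemma~\ref{NVlem7.5} identity, use the zero-mean property $\dashint(y-x_{\op{cm}})\,\ud\mu=0$ to substitute the parabolic cone quantity, apply Cauchy--Schwarz with \eqref{ldaiform}, integrate in $(z,s)$, and invoke the monotonicity formula. Your presentation is actually slightly cleaner: by restricting to the slab $[t_0-4r^2,t_0-2r^2]$ you obtain a fixed $\rho$-window $[r,\sqrt 2\,r]$ on which every $T_\rho((y,t_0))$ contains the slab, whereas the paper integrates over $[t_0-4r^2,t_0-r^2]$ (a small mismatch with the lemma's statement) and pre-multiplies by the Gaussian weight $\exp(|z|^2/8s)$, estimating it afterward by constants on the compact domain.
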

\begin{proof}
After translation, we may assume that $ X_0=0^{n,1} $. Multiplying both sides of \eqref{El} by $ \na u(z,s)\exp(\f{|z|^2}{8s}) $ with $ (z,s) \in B_r\times[-4r^2,-r^2] $ yields
\be
\lda_i(v_i\cdot\na u(z,s))\exp\(\f{|z|^2}{8s}\)=\dashint_{B_r}[(y-x_{\op{cm}})\cdot v_i][(y-x_{\op{cm}})\cdot\na u(z,s)]\exp\(\f{|z|^2}{8s}\)\ud\mu(y).\label{ldaivinablau}
\ee
Recalling the definition of $ x_{\op{cm}} $ given by \eqref{centermass}, we have
\[
\dashint_{B_r}[(y-x_{\op{cm}})\cdot v_i][(x_{\op{cm}}-z)\cdot\na u(z,s)-2s\pa_su(z,s)]\exp\(\f{|z|^2}{8s}\)\ud\mu(y)=0.
\]
This, together with \eqref{ldaivinablau} and Cauchy's inequality, implies that
\begin{align*}
&\lda_i^2|(v_i\cdot\na u(z,s))|^2\exp\(\f{|z|^2}{4s}\)\\
\leq&\(\dashint_{B_r}|(y-x_{\op{cm}})\cdot v_i|^2\ud\mu(y)\)\left[\dashint_{B_r}|(y-z)\cdot\na u(z,s)-2s\pa_su(z,s)|^2\exp\(\f{|z|^2}{4s}\)\ud\mu(y)\right].
\end{align*}
Combining \eqref{ldaiform1}, \eqref{ldaiform2}, and the property that
\[
\exp\(-\f{1}{4}\)\leq\exp\(\f{|z|^2}{4s}\)\leq \exp\(\f{5}{4}\)\exp\(\f{|z-y|^2}{4s}\)
\]
for any $ (z,s)\in B_r\times[-4r^2,-r^2] $, we have
\[
\lda_i|v_i\cdot\na u(z,s)|^2\leq \exp\(\f{5}{4}\)\dashint_{B_r}|(y-z)\cdot\na u(z,s)-2s\pa_su(z,s)|^2\exp\(\f{|y-z|^2}{4s}\)\ud\mu(y).
\]
Integrating both sides of the above inequality with respect to $ (z,s) $ in $ B_r\times[-4r^2,-r^2] $, we deduce that
\begin{align*}
&\lda_i\int_{-4r^2}^{-r^2}\int_{B_r}|v_i\cdot\na u(z,s)|^2\ud z\ud s\\
\leq & Cr^2\int_{-4r^2}^{-r^2}
\int_{B_r}\left[\dashint_{B_r}\f{|(z-y)\cdot\na u(z,s)+2s\pa_su(z,s)|^2}{|s|}\exp\(\f{|z-y|^2}{4s}\)\ud\mu(y)\right]\ud z\ud s.
\end{align*}
Then Proposition \ref{Monotonicity} completes the proof.
\end{proof}

The other lemma we need in the proof of Proposition \ref{L2BestProp} is as follows.

\begin{lem}\label{LemGeqdelta}
Let $ \Lda,\va>0 $, $ k\in\Z\cap[0,n-1] $, $ r\in(0,\f{1}{4}) $, and $ X_0=(x_0,t_0)\in P_2 $. Assume that $ u\in H_{\Lda}(P_4,\cN) $. There exist $ \delta>0, \delta'>0$ depending only on $ \va,\Lda,n $, and $ \cN $ such that if for some $ r\in(0,\delta) $, $ u $ is spatially $ (0,\delta) $-symmetric in $ B_{2r}(x_0) $ at $ t_0 $ and is not spatially $ (k+1,\va) $-symmetric in $ B_r(x_0) $ at $ t_0 $, then 
\[
\inf_{V\in\bG(n,k+1)}\(r^{-n}\int_{t_0-4r^2}^{t_0-r^2}\int_{B_r(x_0)}|V\cdot\na u|^2\ud x\ud t\)>\delta'.
\]
\end{lem}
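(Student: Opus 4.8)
The plan is to argue by contradiction and compactness, following the pattern of the other lemmas of this subsection. Suppose the assertion fails: there exist $\va>0$, sequences $\delta_i\to0^+$, $r_i\in(0,\delta_i)$, $u_i\in H_\Lda(P_4,\cN)$, and $X_{0,i}=(x_{0,i},t_{0,i})\in P_2$ such that $u_i$ is spatially $(0,\delta_i)$-symmetric in $B_{2r_i}(x_{0,i})$ at $t_{0,i}$, $u_i$ is \emph{not} spatially $(k+1,\va)$-symmetric in $B_{2r_i}(x_{0,i})$ at $t_{0,i}$, and---by definition of the infimum---there is $V_i\in\bG(n,k+1)$ with $r_i^{-n}\int_{t_{0,i}-4r_i^2}^{t_{0,i}-r_i^2}\int_{B_{r_i}(x_{0,i})}|V_i\cdot\na u_i|^2\,\ud x\,\ud t\le 2\delta_i$. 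After a translation (which keeps $u_i$ a suitable solution on, say, $P_2$ with the same energy bound) we take $X_{0,i}=0^{n,1}$, and up to a subsequence $V_i\to V\in\bG(n,k+1)$.

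I would then blow up at scale $r_i$: set $v_i:=T_{0^{n,1},r_i}u_i$. By Proposition \ref{BlowUplimits} (with the vanishing scales $r_i\to0^+$), along a subsequence $v_i\wc v$ in $H_{\loc}^1(\R^n\times\R,\R^d)$ and $\f{1}{2}|\na v_i|^2\,\ud x\,\ud t\wc^*\mu=\f{1}{2}|\na v|^2\,\ud x\,\ud t+\nu$ in $\M(\R^n\times\R)$, with $\mu$ of uniformly bounded $\rho^{-n}$-mass on parabolic balls (Lemma \ref{UniformBound}, Remark \ref{Remarkmubound}). Since $r_i^{-n}\int_{-4r_i^2}^{-r_i^2}\int_{B_{r_i}}|V_i\cdot\na u_i|^2\to0$, Proposition \ref{BlowUplimits}\ref{BlowUplimits7} applied with $R=2$ and $r=1$ gives that $v$, $\nu$, and $\mu$ are backwardly invariant with respect to $V$ in $B_1\times(-4,-1)$.

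The next step is to extract backward self-similarity from the $(0,\delta_i)$-symmetry. That hypothesis concerns the rescaling $T_{0^{n,1},2r_i}u_i=T_{0^{n,1},2}v_i$, i.e. the measures $T_{0^{n,1},2}\bigl(\f{1}{2}|\na v_i|^2\,\ud x\,\ud t\bigr)\wc^*T_{0^{n,1},2}\mu$, and it produces spatially $0$-symmetric (hence backwardly self-similar) Radon measures $\mu_i$ with $\MM\bigl(T_{0^{n,1},2}(\f{1}{2}|\na v_i|^2\,\ud x\,\ud t),\mu_i\bigr)<\delta_i$. As $\MM$ metrizes weak${}^*$ convergence on $\M(P_1)$, this forces $\mu_i\wc^*T_{0^{n,1},2}\mu$; since weak${}^*$ limits preserve backward self-similarity (Remark \ref{decomSkuseSva}, cf. \cite[Lemma 2.27]{FWZ24}), $T_{0^{n,1},2}\mu$---and hence $\mu$ itself, which agrees with it on $\R^n\times\R_-$---is backwardly self-similar. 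Combining the backward self-similarity of $\mu$ with the $V$-invariance on $B_1\times(-4,-1)$, and invoking the cone-splitting results (Lemma \ref{ConeSplittingLem}, Corollary \ref{ConeSplittingCor}) together with the monotonicity identity \eqref{Phimu} at base points of $V$---along the lines of the proof of Proposition \ref{SgnSgn2}---one upgrades this to: $\mu$ is spatially $(k+1)$-symmetric. Then $T_{0^{n,1},2}\mu$ is spatially $(k+1)$-symmetric, while $\MM\bigl(\f{1}{2}|\na(T_{0^{n,1},2r_i}u_i)|^2\,\ud x\,\ud t,\,T_{0^{n,1},2}\mu\bigr)\to0$, so for large $i$ the map $u_i$ is spatially $(k+1,\va)$-symmetric in $B_{2r_i}(x_{0,i})$ at $t_{0,i}$, contradicting the choice of $u_i$.

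The delicate point---and the one I expect to be the main obstacle---is the upgrade in the previous paragraph. The integral bound only controls $|V_i\cdot\na u_i|^2$ on the ball $B_{r_i}(x_{0,i})$, so Proposition \ref{BlowUplimits}\ref{BlowUplimits7} yields $V$-invariance of $\mu$ (and $v$, $\nu$) only on the bounded region $B_1\times(-4,-1)$---equivalently, after using self-similarity, only on a parabolic cone---whereas spatial $(k+1)$-symmetry requires invariance on all of $\R^n\times\R_-$. Bridging this gap is precisely where the rigidity in the monotonicity formula \eqref{Phimu} (forcing self-similarity of $v$ at the points $(x,0)$, $x\in V$) and cone-splitting must be combined with care, exactly the mechanism by which the $(n-1)$-symmetric case is treated in Proposition \ref{SgnSgn2}; the remaining ingredients are routine compactness bookkeeping and the elementary scaling relations between blow-ups at scales $r_i$ and $2r_i$.
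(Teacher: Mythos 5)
Your proposal is correct and follows essentially the same route as the paper: contradiction and blow-up at scale $r_i$, Proposition \ref{BlowUplimits}\ref{BlowUplimits7} for the $V$-invariance on $B_1\times(-4,-1)$, backward self-similarity of the limit measure extracted from the $(0,\delta_i)$-symmetry via weak${}^*$ limits, and then the self-similarity/cone-splitting upgrade to full spatial $(k+1)$-symmetry (the paper delegates this last step to the argument of \cite[Lemma 7.2]{NV17}, which is the same mechanism you describe). The delicate point you flag is indeed the one the paper handles by citation rather than in detail, and your sketch of how to bridge it is the right one.
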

\begin{proof}
Without loss of generality, we can assume $ X_0=0^{n,1} $ by applying a translation. Suppose that the statement does not hold. In that case, there exist $ \va>0 $, $ \delta_i\to 0^+ $, $ \delta_i'\to 0^+$, $ r_i\in(0,\delta_i) $, a sequence of suitable solutions $ \{u_i\}\subset H_{\Lda}(P_4,\cN) $, and $ \{V_i\} \subset\bG(n,k+1) $ such that each $ u_i $ is spatially $ (0,\delta_i) $-symmetric in $ B_{2r_i} $ at $ t=0 $, but not spatially $ (k+1,\va) $-symmetric in $ B_{r_i} $ at $ t=0 $. Moreover, for each $i$, $u_{i}$ satisfies
\[
r_i^{-n}\int_{-4r_i^2}^{-r_i^2}\int_{B_{r_i}}|V_i\cdot\na u_i|^2\ud x\ud t<\delta_i'.
\]
Given the spatially $ (0,\delta_i) $-symmetry of $ u_i $ in $ B_{2r_i} $ at $ t=0 $, there exists a sequence of Radon measures $ \{\mu_i\}\subset\M(\R^n\times\R) $ which are backward self-similar such that
\be
\MM_{0^{n,1},2}\(\f{1}{2}|\na v_i|^2\ud x\ud t,\mu_i\)<C(n)\delta_i,\label{d01nablavimui}
\ee
where $ v_i=T_{0^{n,1},r_i}u_i $. Up to a subsequence, we assume that $ V_i\to V\in\bG(n,k+1) $, and
\be
\begin{aligned}
\mu_i\wc^*\mu,\,\,&\quad\text{in }\M(\R^n\times\R),\\
\f{1}{2}|\na v_i|^2\ud x\ud t\wc^*\mu',&\quad\text{in }\M(\R^n\times\R).\label{Convergencemuwtmu}
\end{aligned}
\ee
Due to the reasoning in Proposition \ref{BlowUplimits}\ref{BlowUplimits6}, we see that $ \mu $ remains backward self-similar. From the above convergences, $ \mu' $ is also backward self-similar in the region $ B_2\times(-4, 0) $. Moreover, Proposition \ref{BlowUplimits}\ref{BlowUplimits7} shows that $ \mu' $ is backward invariant with respect to $ V $ in $ B_1\times(-4,-1) $. Using arguments similar to those in the proof of \cite[Lemma 7.2]{NV17}, we can demonstrate that this backward self-similarity of $ \mu' $ extends the invariance with respect to $ V $ in the domain $ B_2\times(-4,0) $. Specifically, $ \mu' $ is spatially $ (k+1) $-symmetric in $ P_1 $. For sufficiently large $ i\in\Z_+ $, the convergence \eqref{Convergencemuwtmu} implies that $ u_i $ must be spatially $ (k+1, \va) $-symmetric in $ B_{r_i} $ at $ t=0 $. However, this contradicts our initial assumption that $ u_i $ is not spatially $ (k+1,\va) $-symmetric in $ B_{r_i} $ at $ t=0 $. Thus, the original statement  holds.
\end{proof}

\begin{proof}[Proof of Proposition \ref{L2BestProp}]
In view of \eqref{ldaorder} and Lemma \ref{VnablauLeqTwoPhi}, we have
\begin{align*}
&\lda_{k+1}\(r^{-n}\int_{t_0-4r^2}^{t_0-2r^2}\int_{B_r(x_0)}|V_{k+1}\cdot\na u|^2\ud z\ud s\)\\
&\quad\quad\quad\quad\leq\f{Cr^2}{\mu(B_r(x_0))}\int_{B_r(x_0)}\left[\cW\(u,(y,t_0),2r,\f{r}{2}\)+r\right]\ud\mu(y),
\end{align*}
where $ V_{k+1}=\op{span}\{v_i\}_{i=1}^{k+1} $. The desired property follows from \eqref{DmuxrRepresentation} and Lemma \ref{LemGeqdelta}.
\end{proof}

\subsection{A key technical lemma}

At the end of this section, we use Reifenberg-type theorems and $ L^2 $-best approximation estimates to bound a specific collection of balls centered on the singular set. This result is crucial in the construction of the coverings.

\begin{lem}\label{ReifenbergLemma}
Assume $ \Lda,\va>0 $, $ R\in(0,\f{1}{10}) $, $ X_0=(x_0,t_0)\in P_2 $, and $ u\in H_{\Lda}(P_4,\cN) $. There exists $ \eta>0 $ depending only on $ \va,\Lda,n $, and $ \cN $ such that if $ r\in(0,\eta) $, then the following result holds. 

Suppose that 
\be
\sup_{y\in B_r(x_0)}\Phi(u,(y,t_0),2r)\leq E.\label{Brx0Phi}
\ee
Consider a collection of disjoint balls $ \{B_{2r_y}(y)\}_{y\in\cD} $, where $ \cD\subset\Sg_{\va;Rr}^k(u,t_0)\cap B_r(x_0) $, and for any $ y\in \cD $, the radius satisfies $ Rr\leq r_y\leq r $ and
\be
\Phi(u,(y,t_0),\eta r_y)\geq E-\eta,\label{Phiyt0geq}
\ee
then there is $ C>0 $, depending only on $ n $ such that
\[
\sum_{y\in\cD}r_y^k\leq Cr^k.
\]
\end{lem}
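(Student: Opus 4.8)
The plan is to run the Naber--Valtorta Reifenberg scheme. Set
$$
\mu:=\sum_{y\in\cD}\w_kr_y^k\delta_y,
$$
so the quantity to be bounded is $\mu(B_r(x_0))=\sum_{y\in\cD}\w_kr_y^k$. Since the balls $\{B_{2r_y}(y)\}$ are disjoint with $r_y\geq Rr$, the points of $\cD$ are $4Rr$-separated; hence $D_{\mu}^k(z,\rho)=0$ for every $z\in\cD$ and $\rho\leq 4Rr$, as $\mu\llcorner B_{\rho}(z)$ is then a single atom. By the discrete Reifenberg theorem (Theorem \ref{Rei1} with Corollary \ref{ReifenbergRem}), it then suffices to verify
\be
\sum_{r_i\leq 2t}\int_{B_t(x)}D_{\mu}^k(z,r_i)\,\ud\mu(z)<\delta_{\op{R}}'\,t^k\qquad\text{for every }B_t(x)\subset B_{2r}(x_0),\label{planReif}
\ee
with $r_i=2^{-i}t$; the theorem then gives $\mu(B_r(x_0))\leq C_{\op{R}}(n)\,r^k$, which is the assertion. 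Note $B_t(x)\subset B_{2r}(x_0)$ forces $t\leq 2r<2\eta$, so only finitely many scales $r_i\in(4Rr,2t]$ enter.

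To bound $D_{\mu}^k(z,r_i)$ at the surviving scales I would appeal to the parabolic $L^2$-best approximation estimate, Proposition \ref{L2BestProp}, applied to $u$ on $B_{2r_i}(z)$. Its failure-of-$(k+1,\va)$-symmetry hypothesis follows from $\cD\subset\Sg_{\va;Rr}^k(u,t_0)$: for $r$ small $2r_i\in[Rr,1]$, and since a spatially $(k+1)$-symmetric measure is spatially $k$-symmetric, failure of $(k,\va)$-symmetry at scale $2r_i$ forces failure of $(k+1,\va)$-symmetry. Its $(0,\delta)$-symmetry hypothesis I would get from the pinching Lemma \ref{pinchLem}: the density bounds $\sup_{B_r(x_0)}\Phi(u,(\cdot,t_0),2r)\leq E$ and $\Phi(u,(z,t_0),\eta r_z)\geq E-\eta$, together with the almost-monotonicity of $\Phi$ (Proposition \ref{Monotonicity}), confine $\Phi(u,(z,t_0),\cdot)$ to an interval of length $\leq C(\eta+r)$ over the scales in $[\eta r_z,2r]$, so $\cW(u,(z,t_0),\sigma,\ga\sigma)<\ga$ at the relevant scales once $\eta$ (hence $r<\eta$) is small relative to the pinching constant $\ga$. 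Proposition \ref{L2BestProp} then yields
$$
D_{\mu}^k(z,r_i)\leq\f{C}{r_i^k}\int_{B_{r_i}(z)}\[\cW\(u,(y,t_0),2r_i,\f{r_i}{2}\)+r_i\]\ud\mu(y),\qquad C=C(\va,\Lda,n,\cN).
$$

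Inserting this into \eqref{planReif}, summing in $r_i$, and interchanging the $z$- and $y$-integrations produces the factor $\mu(B_{r_i}(y)\cap B_t(x))$. Granting the Ahlfors-type bound $\mu(B_{r_i}(y))\leq C(n)r_i^k$ at these scales, the left side of \eqref{planReif} is at most
$$
C(n)\int_{B_t(x)}\Big(\sum_{r_i}\cW\(u,(y,t_0),2r_i,\f{r_i}{2}\)+\sum_{r_i\leq 2t}r_i\Big)\ud\mu(y).
$$
The radius sum is $\leq 4t$; the density-drop sum telescopes up to a bounded overlap factor (consecutive intervals $[r_i/2,2r_i]$ overlap only by a factor $2$) to at most $\Phi(u,(y,t_0),\text{top})-\Phi(u,(y,t_0),\text{bottom})\leq C(\eta+r)$, uniformly in $y$, by the interval estimate again. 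With $\mu(B_t(x))\leq C(n)t^k$ (Ahlfors bound once more) the whole expression is $\leq C(n)(\eta+r+t)t^k\leq C(n)\eta\,t^k$, which is $<\delta_{\op{R}}'t^k$ once $\eta$ is chosen small depending only on $n$ --- legitimate, since $\delta_{\op{R}}'$ depends only on $n$ and the constants $\delta,\ga$ produced above depend only on $\va,\Lda,n,\cN$; this establishes \eqref{planReif}.

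The hard part is the circularity exposed in the last step: the $L^2$-estimate bounds $D_{\mu}^k$ by an integral against $\mu$, so verifying the Reifenberg hypothesis already needs the Ahlfors-type control $\mu(B_{\rho}(y))\leq C\rho^k$ that is, for $\rho=r$, essentially the conclusion. As in \cite{NV17,EE19}, I would resolve this by proving the Ahlfors bound and \eqref{planReif} simultaneously, by downward induction on scale inside the proof of Theorem \ref{Rei1}, the base case being a crude packing estimate that again uses the disjointness of the $B_{2r_y}(y)$ with $r_y\geq Rr$. The features genuinely new to the flow setting are the parabolic $L^2$-best approximation itself and the fact that $\Phi$ is only almost-monotone, the additive $O(r)$ error of Proposition \ref{Monotonicity} being harmless because all radii here are $<\eta\ll1$; the extra bookkeeping needed to keep $\eta$ independent of the stratum scale $R$ --- which is why the trivial range $\rho\leq 4Rr$ is split off at the outset, and why one must separately dispose of the $O(1)$ scales $r_i$ near $r$ that lie above the density window --- is routine but must be carried out with care.
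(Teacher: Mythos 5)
Your proposal is correct and follows essentially the same route as the paper's proof: the pinching hypotheses \eqref{Brx0Phi}--\eqref{Phiyt0geq} plus Lemma \ref{pinchLem} give the $(0,\delta)$-symmetry, stratum membership gives failure of $(k+1,\va)$-symmetry, Proposition \ref{L2BestProp} gives the displacement bound, the $\cW$-sums telescope to $O(\eta)$, and the Ahlfors/Reifenberg circularity is broken by downward induction on scale. The only (cosmetic) difference is that the paper runs that induction externally, via the truncated measures $\mu_i:=\sum_{r_y\leq r_i}\w_kr_y^k\delta_y$ with the discrete Reifenberg theorem (Corollary \ref{ReifenbergRem}) applied as a black box at each scale, rather than ``inside the proof of Theorem \ref{Rei1}'' as you phrase it.
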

\begin{proof}
Without loss of generality, we can assume that $ X_0=0^{n,1}$. For each $ i\in\Z_+ $, we define $ r_i:=2^{-i}r $ and $
\mu_i:=\sum_{y\in\cD,\,\,r_y\leq r_i}\w_kr_y^k\delta_y $. Then the desired estimate is equivalent to $ \mu_0(B_r) \leq C(n)r^{k} $. By the definition of $\mu_{i}$, it is easy to check if $ y\in\supp\mu_i $ and $ j\in\Z_{\geq i} $, $ D_{\mu_i}^k(y,r_j) $ satisfies
\be
D_{\mu_i}^k(y,r_j)=\left\{\begin{aligned}
&D_{\mu_j}^k(y,r_j)&\text{ if }&y\in\supp\mu_j,\\
&0&\text{ if }&y\notin\supp\mu_j.
\end{aligned}\right.\label{Dji}
\ee

We claim: there exists $\eta$ such that for $ y\in\cD $ and $ r_i<\f{r}{4} $, it holds
\be
D_{\mu_i}^k(y,r_i)\leq\f{C(\va,\Lda,n,\cN)}{r_i^k}\int_{B_{r_i}(y)}\left[\cW\(u,(y,0),2r_i,\f{r_i}{2}
\)+r_i\right]\ud\mu_i(z).\label{DisplacementEstimates}
\ee

For $ y\in\cD $ and $ r_i<\f{r}{4} $, we get from conditions \eqref{Brx0Phi}, \eqref{Phiyt0geq} and Proposition \ref{Monotonicity} that $ \cW(u,(y,0),8r_i,\eta r_i)\leq\eta $. By choosing a sufficiently small $ \eta= \eta(\delta,\Lda,n,\cN)>0 $ for a given $ \delta>0 $, Lemma \ref{pinchLem} implies that if $ r\in(0,\eta) $, then $ u $ is spatially $ (0,\delta) $-symmetric in $ B_{2r_i}(y) $ at $ t=0 $. However, since $ \cD\subset \Sg_{\va;Rr}^k(u,0)\cap B_r $ and $ r_i \geq Rr $, the definition of quantitative spatial stratification ensures that $ u $ is not spatially $ (k+1,\va) $-symmetric in $ B_{r_i}(y) $ at $ t=0 $. Shrinking $ \eta=\eta(\va,\Lda,n,\cN)>0 $ so that Proposition \ref{L2BestProp} is applicable, we obtain the claimed bound.

To proceed,  we  use an inductive argument to establish that up to basic covering arguments, there exists a constant $ C_0=C_0(n)>0 $ such that 
\be
\mu_i(B_{r_i}(x))\leq C_0(n)r_i^k\quad\text{for any }x\in B_r~\text{and}~i\geq 2.\label{InductivelyProof}
\ee

 For large $ i\in\Z_+ $, where $ r_i<Rr $, the definition of $\mu_{i}$ implies $ \mu_i \equiv 0 $, making the inequality trivial. It serves as the base case for induction. Assume that \eqref{InductivelyProof} holds for all $ j\in\Z_{\geq i+1} $. We need to verify it for $ i $. First, we establish a preliminary estimate
\be
\mu_j(B_{4r_j}(x))\leq C(n)C_0(n)r_j^k\quad\text{for any }x\in B_r\text{ and }j\geq i-2,\label{RoughBound}
\ee
where $ C_0>0 $ is given in \eqref{InductivelyProof}. For $ x \in B_r $, we write
\[
\mu_j(B_{4r_j}(x))=\mu_{j+2}(B_{4r_j}(x))+\sum_{y\in\cD\cap B_{4r_j}(x),\,\, r_{j+2}<r_y\leq r_j}\w_kr_y^k.
\]
With the help of the disjointness of the balls in the collection $ \{B_{2r_y}(y)\}_{y\in\cD} $, \eqref{RoughBound} follows directly. 

We now fix $ x\in B_r $. Using \eqref{Dji}, \eqref{DisplacementEstimates}, together with Fubini's theorem, we compute
\begin{align*}
&\sum_{r_j\leq 2r_i}\int_{B_{r_i}(x)}D_{\mu_i}^k(z,r_j)\ud\mu_i(z)\\
=&\sum_{r_j\leq 2r_i}\int_{B_{r_i}(x)}D_{\mu_j}^k(z,r_j)\ud\mu_j(z)\\
\leq&\sum_{r_j\leq 2r_i}\f{C}{r_j^k}\int_{B_{r_i}(x)}\left\{\int_{B_{r_j}(z)}\left[\cW\(u,(y,0),2r_j,\f{r_j}{2}\)+r_j\right]\ud\mu_j(y)\right\}\ud\mu_j(z)\\
\leq &\sum_{r_j\leq 2r_i}\int_{B_{r_i+r_j}(x)}\f{C\mu_j(B_{r_j}(y))}{r_j^k}\left[\cW\(u,(y,0),2r_j,\f{r_j}{2}\)+r_j\right]\ud\mu_j(y),
\end{align*}
where for the first equality, we have used the property given in \cite[Lemma 6.3]{WWW25}. By \eqref{RoughBound}, it follows that
\begin{align*}
&\quad\sum_{r_j\leq 2r_i}\int_{B_{r_i}(x)}D_{\mu_i}^k(z,r_j)\ud\mu_i(z)\\
&\leq C\int_{B_{4r_i}(x)}\sum_{r_j\leq 2r_i}\left[\cW\(u,(y,t_0),2r_j,\f{r_j}{2}\)+r_j\right]\ud\mu_j(y)\\
&\leq C\(\sum_{y\in B_{4r_i}(x)\cap\supp\mu_i}r_y^k(\cW\(u,(y,0),8r_j\)+r_j)\)\\
&\leq C\eta\mu_i(B_{4r_i}(x))\\
&\leq C(\va,\Lda,n,\cN)\eta r_i^k.
\end{align*}
If we select a sufficiently small $ \eta=\eta(\va,\Lda,n,\cN)>0 $, then \eqref{InductivelyProof} follows from Corollary \ref{ReifenbergRem}.
\end{proof}

\section{Dichotomy property of the quantitative spatial stratification}\label{DichotomySection}

In the present section, we gather several properties related to the quantitative spatial stratification of harmonic map flows. Before giving the main result, we introduce a useful concept.
\begin{defn}[Effectively spanned subspace]
For $ k\in\Z\cap[1,n] $, consider points $ \{x_i\}_{i=0}^k \subset\R^n $ and $ \rho>0 $. We say that these points $ \rho $-effectively span the subspace $ L= x_0+\op{span}\{x_i-x_0\}_{i=1}^k\in\bA(n,k) $ if, $ |x_1-x_0|\geq 2\rho $ and for any $ i\in\Z\cap[2,k] $,
\[
\dist(x_i,x_0+\op{span}\{x_1-x_0,...,x_{i-1}-x_0\})\geq 2\rho.
\]
We also call such points $ \rho $-independent.
\end{defn}

The main result in this section is the following dichotomy result.
\begin{prop}[Dichotomy result]\label{Dichotomy}
Let $ \va,\eta',\rho,\ga\in(0,\f{1}{4}) $, $ E',\Lda>0 $, $ k\in\Z\cap[1,n-1] $, and $ X_0=(x_0,t_0)\in P_2 $. Assume that $ u\in H_{\Lda}(P_4,\cN) $. Then, there exists $ \eta_0\ll\rho $, depending only on $ E',\va,\eta',\ga,\Lda,n,\cN $, and $ \rho $ such that the following properties hold.

Suppose that
\[
\sup_{y\in B_r(x_0)}\Phi(u,(y,t_0),2r)\leq E\in[0,E'].
\]
If $ \eta,r\in(0,\eta_0) $, at least one of the following two possibilities occurs.
\begin{enumerate}[label=$(\theenumi)$]
\item For any $ y\in \Sg_{\va;\eta r}^k(u,t_0)\cap B_r(x_0) $, $ \Phi(u,(y,t_0),2\ga\rho r)\geq E-\eta' $.
\item There is $ L\in\bA(n,k-1) $ such that
\be
\{y\in B_r(x_0):\Phi(u,(y,t_0),2\eta r)\geq E-\eta\}\subset B_{\rho r}(L).\label{Choice2}
\ee
If $ k=0 $, we adopt the convention that the set on the left-hand side of \eqref{Choice2} is empty.
\end{enumerate}
\end{prop}
\begin{rem}\label{Conventionll}
The notation $ \eta_0\ll\rho $ means that $ \eta_0 $ can be chosen such that $ \eta_0<c_0\rho $ for any fixed $ c_0>0 $.
\end{rem}
First, we prove a preliminary lemma.
\begin{lem}\label{DichotomyLem}
Let $ \va,\eta',\rho,\ga\in(0,\f{1}{4}) $, $ E',\Lda>0 $, and $ X_0=(x_0,t_0)\in P_2 $. Assume that $ u\in H_{\Lda}(P_4,\cN) $. Then, there exist constants $ \eta_0,\beta\ll\rho $, depending only on $ E',\va,\eta',\ga,\Lda,n,\cN $, and $ \rho $ such that the following properties hold. 

Suppose that
\[
\sup_{y\in B_r(x_0)}\Phi(u,(y,t_0),2r)\leq E\in[0,E'].
\]
If $ \eta,r\in(0,\eta_0) $, and there are $ \rho r $-independent points $ \{y_i\}_{i=0}^k\subset B_r(x_0) $ such that 
\[
\Phi(u,(y_i,t_0),2\eta r)\geq E-\eta\quad\text{ for any }i\in\Z\cap[0,k],
\]
then for $ L=y_0+\op{span}\{y_i-y_0\}_{i=1}^k $, 
\be
\Phi(u,(y,t_0),2\ga\rho r)\geq E-\eta'\quad\text{ for any }y\in B_{\beta r}(L)\cap B_r(x_0),\label{EnergyGeq}
\ee
and
\be
\Sg_{\va;\eta r}^k(u,t_0)\cap B_r(x_0)\subset B_{\beta r}(L).\label{SubsetSg}
\ee
\end{lem}
\begin{proof}
Assume $ X_0=0^{n,1} $ after a translation. 

\emph{Step 1. Proof of the property \eqref{EnergyGeq}.} Suppose that the claim fails. Then, there exist sequences $ \{u_j\}\subset H_{\Lda}(P_4,\cN) $, $ \eta_j\to 0^+ $, $ \beta_j\to 0^+ $, points $ \{y_{i,j}\}_{i=0}^k\subset B_{r_j} $ that are $ \rho r_j $-independent and $ \{r_j\}\subset(0,\eta_j) $ satisfying
\begin{gather}
\sup_{y\in B_{r_j}}\Phi(u_j,(y,0),2r_j)\leq E_j\in[0,E'],\label{Phiuj1}\\
\Phi(u_j,(y_{i,j},0),2\eta_jr_j)\geq E_j-\eta_j\quad\text{ for any }i\in\Z\cap[0,k],\label{Phiujyj}
\end{gather}
but for some $ x_j\in B_{\beta_jr_j}(L_j)\cap B_{r_j} $, 
\be
\Phi(u_j,(x_j,0),2\ga\rho r_j)<E_j-\eta',\quad\eta'>0,\label{PhiujEjeta}
\ee
where $ L_j=y_{0,j}+\op{span}\{y_{i,j}-y_{0,j}\}_{i=1}^k $. Define
\[
\wt{L}_j=\f{y_{0,j}}{r_j}+\op{span}\left\{\f{y_{i,j}-y_{0,j}}{r_j}\right\}_{i=1}^k.
\]

As a result, there exist $ v\in H_{\loc}^1(\R^n\times\R,\cN) $ and $ \nu\in\M(\R^n\times\R) $ such that, up to a subsequence,
\be
\begin{gathered}
v_j:=T_{0^{n,1},r_j}u_j\wc v\text{ weakly in }H_{\loc}^1(\R^n\times\R,\cN),\\
\f{1}{2}|\na v_j(x,t)|^2\ud x\ud t\wc^*\mu:=\f{1}{2}|\na v(x,t)|^2\ud x\ud t+\nu\text{ in }\M(\R^n\times\R).
\end{gathered}\label{vjconvergence}
\ee
Moreover, we assume that as $ j\to+\ift $, $
E_j\to E\in[0,E'] $, $ \f{y_{i,j}}{r_j}\to y_i\in\ol{B}_1 $, $ \wt{L}_j\to L $, and $ \f{x_j}{r_j}\to x\in\ol{B}_1\cap L $. The assumptions \eqref{Phiuj1} and \eqref{Phiujyj} imply that $ \cW(u_j,(y_{i,j},0),2r_j,2\eta_{j}r_j)\to 0^+ $. Due to Proposition \ref{BlowUplimits}\ref{BlowUplimits6}, $ \mu $ is backward self-similar at each $ y_i $ in $ \R^n\times(-16,0) $. By  Lemma \ref{ConeSplittingCor}, $ \mu $ is invariant under translations along $ V=\op{span}\{y_j-y_0\}_{j=1}^k $. Taking $ j\to+\ift $ in \eqref{PhiujEjeta}, we deduce that
\[
\Phi(\mu,(x,0),2\ga\rho)=\f{1}{2}\int_{T_{2\ga\rho}((x,0))}G_{(x,0)}(y,s)\ud\mu(y,s)\leq E-\eta'.
\]
Since $ x\in L $, from the $ V $-invariance of $ \mu $ it follows that $ \Phi(\mu,(x,0),2\ga\rho)=E $, which leads to a contradiction to $ \eta'>0 $. Thus, the first property holds.

\emph{Step 2. Proof of the property \eqref{SubsetSg}.} Fix $ \beta $ from Step 1 and choose $ \eta_0 $ small enough. Suppose that the inclusion fails, so there exists $ z_j\in \Sg_{\va;\eta_jr_j}^k(u_j,0)\cap B_{r_j}\backslash B_{\beta r_j}(L_j) $. Up to a subsequence, we still have \eqref{vjconvergence} and can assume that $ \eta_j\to 0^+ $, $ \f{y_{i,j}}{r_j} $, $ L_j $, and $ E_j $ converge as in Step 1. Moreover, we have $ \f{z_j}{r_j}\to z\in\ol{B}_1\backslash B_{\beta}(L) $. Since $ z\notin B_{\beta}(L) $, it follows from Proposition \ref{BlowUplimits}\ref{BlowUplimits6}, Remark \ref{Remarkmubound}, and analogous arguments in \cite[Section 3]{LW02b} that each blow-up limit of $ \mu $ at the point $ (z,0) $ (the limit of $ \{T_{(z,0),\lda}\mu\}_{\lda>0} $ up to a subsequence) exists and is also backward self-similar at $ 0^{n,1} $. Since $ \mu $ is already backward invariant with respect to $ V=\op{span}\{y_j-y_0\}_{j=1}^k $, Lemma \ref{ConeSplittingCor} implies that the blow-up limit of $ \mu $ at $ z $ will be spatially $ (k+1) $-symmetric at $ t=0 $ with respect to $ \op{span}\{z,V\} $. Let $ \mu^0 $ be a blow-up limit of $ \mu $ at $ (z,0) $. As a result, we can choose $ r>0 $ such that
\be
\MM_{0^{n,1},1}(T_{(z,0),r}\mu,\mu^0)<\f{\va}{4}.\label{dTz0rmu}
\ee
On the other hand, for $ j\in\Z_+ $ sufficiently large, we have
\[
\MM_{0^{n,1},1}\(\f{1}{2}|\na(T_{(r_j^{-1}z_j,0),r}T_{0^{n,1},r_j}u_j)|^2\ud y\ud s,T_{(z,0),r}\mu\)<\f{\va}{4}.
\]
This, together with \eqref{dTz0rmu}, implies that
\[
\MM_{0^{n,1},1}\(\f{1}{2}|\na(T_{(z_j,0),rr_j}u_j)|^2\ud y\ud s,\mu^0\)<\f{\va}{2}.
\]
It contradicts $ z_j\in\Sg_{\va;\eta_jr_j}^k(u_j,0) $ when $ \eta_j<r $, proving the inclusion.
\end{proof}
Now we use Lemma \ref{DichotomyLem} to give the proof of Proposition \ref{Dichotomy}. 
\begin{proof}[Proof of Proposition \ref{Dichotomy}]
Assuming that there exist points $ \{y_i\}_{i=0}^k $ satisfying the conditions of the Lemma \ref{DichotomyLem}, we can choose $ \eta,\beta>0 $ small enough to ensure that the first property holds. If no such points exist, the second property follows directly, as we can contain the set within a lower-dimensional affine subspace.
\end{proof}

\section{The proof of Lemma \ref{MainCovering}}\label{ProofofCoverSection}

Without loss of generality, we assume $R \in (0, \frac{1}{10})$. If $R\in[\f{1}{10},1)$, we can trivially cover $\Sg_{\va;\eta Rr}^k(u,t_0) \cap B_r(x_0)$ with balls $\{B_{Rr}(y)\}_{y \in \cC}$ such that $\cC \subset \Sg_{\va;\eta Rr}^k(u,t_0) \cap B_r(x_0)$ and $\#\cC \leq C(n)$, which implies $(\#\cC)(Rr)^k \leq C(n)r^k$ and the result holds.

Let $ \eta,\eta',\ga,r $, and $ \rho $ be positive numbers in $ (0,\frac{1}{10}) $, to be specified later. We require $ \eta\ll\rho $ in the sense of Remark \ref{Conventionll}. Define $ r_i=\rho^ir $. In this section, all arguments assume the conditions of Lemma \ref{MainCovering}, particularly $ r\in(0,\eta^2) $. We start by defining good and bad balls.

\begin{defn}\label{DeftypeItypeII}
For $ s\in[Rr,r) $ and $ y\in B_r(x_0) $, we call $ B_s(y) $ a good ball if
\[
\Phi(u,(z,t_0),2\ga\rho s)\geq E-\eta'\quad\text{for any }z\in\Sg_{\va;\eta Rr}^k(u,t_0)\cap B_s(y),
\]
where $ E $ is given by \eqref{Edefinition}. Otherwise, $ B_s(y) $ is a bad ball.
\end{defn}

\begin{lem}\label{badballproperty}
Suppose that $ B_s(y) $ is a bad ball. There exists $ \eta\in(0,\f{1}{10}) $, depending only on $ \va,\eta',\ga,\Lda,n,\cN $, and $ \rho $ such that if $ r\in(0,\eta^2) $, then
\[
\left\{z\in B_s(y):\Phi(u,(z,t_0),2\eta s)\geq E-\f{\eta}{2}\right\}\subset B_{\rho s}(L)
\]
for some $ L=L(y,s)\in\bA(n,k-1) $.
\end{lem}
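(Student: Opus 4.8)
The plan is to derive this lemma directly from the Dichotomy result, Proposition~\ref{Dichotomy}, applied at the base point $(y,t_0)$ with radius $s$. The key observation is that the hypothesis ``$B_s(y)$ is a \emph{bad} ball'' is exactly the negation of alternative~(1) of that dichotomy, so alternative~(2) — which, up to a harmless slack in the energy level, \emph{is} the asserted inclusion — must hold. Two minor discrepancies must be absorbed along the way: first, Proposition~\ref{Dichotomy} needs an upper bound $E_{\mathrm{loc}}$ for $\sup_{z\in B_s(y)}\Phi(u,(z,t_0),2s)$, and the natural such bound exceeds the reference level $E=\sup_{z\in B_{2r}(x_0)}\Phi(u,(z,t_0),2r)$ of \eqref{Edefinition} only by an amount that the monotonicity formula forces below $\eta/2$ once $r$ is small; second, the stratum $\Sg_{\va;\eta Rr}^k(u,t_0)$ appearing in the definition of a bad ball is contained in $\Sg_{\va;\eta s}^k(u,t_0)$, since $\eta Rr\le\eta s$ (Remark~\ref{inclusionSvak}).

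First I would record the energy bound. Since $s<r$ and $y\in B_r(x_0)$ one has $B_s(y)\subset B_{2r}(x_0)$, so $\Phi(u,(z,t_0),2r)\le E$ for every $z\in B_s(y)$; feeding this into the almost-monotonicity of $\Phi$ (Proposition~\ref{Monotonicity}) between the scales $2s$ and $2r$ yields
\[
\sup_{z\in B_s(y)}\Phi(u,(z,t_0),2s)\ \le\ E\exp(2C_1 r)+2C_1 r\ =:\ E_{\mathrm{loc}}\ \le\ E'',
\]
where $C_1=C_1(\Lda,n)$ and $E''$ depends only on $\Lda,n,\cN$ (using Lemmas~\ref{ComparisonEandPhi} and~\ref{UniformBound} for the a priori bound $E\le E'$). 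As $E\le E'$ and $r<\eta^2$, we get $E_{\mathrm{loc}}-E\le E'(\exp(2C_1\eta^2)-1)+2C_1\eta^2$, which is $\le\eta/2$ provided $\eta$ is chosen small in terms of $\Lda,n,\cN$. Next I would apply Proposition~\ref{Dichotomy} at $(y,t_0)$ with radius $s$, parameters $\va,\eta',\rho,\ga$, energy bound $E''$, and with the role of ``$E$'' in that statement played by $E_{\mathrm{loc}}$ — admissible after the routine domain check, since $u\in H_\Lda(P_4,\cN)$ and $s$ is small. Writing $\eta_0$ for the threshold it produces and taking $\eta$ below $\eta_0$ and below the smallness already imposed, the condition $r<\eta^2<\eta_0$ makes the dichotomy applicable, so one of the alternatives~(1), (2) holds.

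Finally I would rule out alternative~(1). Being a bad ball, $B_s(y)$ contains a point $z_0\in\Sg_{\va;\eta Rr}^k(u,t_0)\cap B_s(y)$ with $\Phi(u,(z_0,t_0),\ga\rho s)<E-\eta'$; by the inclusion above this $z_0$ also lies in $\Sg_{\va;\eta s}^k(u,t_0)\cap B_s(y)$, so alternative~(1), were it to hold, would give $\Phi(u,(z_0,t_0),\ga\rho s)\ge E_{\mathrm{loc}}-\eta'\ge E-\eta'$, a contradiction. Hence alternative~(2) holds, furnishing $L=L(y,s)\in\bA(n,k-1)$ with $\{z\in B_s(y):\Phi(u,(z,t_0),2\eta s)\ge E_{\mathrm{loc}}-\eta\}\subset B_{\rho s}(L)$; and since $E_{\mathrm{loc}}-\eta\le E-\eta/2$ by the first step, the set $\{z\in B_s(y):\Phi(u,(z,t_0),2\eta s)\ge E-\eta/2\}$ is contained in it, which is precisely the claim. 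I do not expect a conceptual obstacle here: all the real content sits inside Proposition~\ref{Dichotomy} (ultimately the cone-splitting plus blow-up argument of Lemma~\ref{DichotomyLem}). The one point that genuinely needs care is the order of quantifiers — $\eta$ must be fixed only after both the monotonicity-correction estimate and the dichotomy threshold $\eta_0$ are known, and only then may $r\in(0,\eta^2)$ be taken — so that in the end $\eta$ depends solely on $\va,\eta',\ga,\Lda,n,\cN$, and $\rho$.
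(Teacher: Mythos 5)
Your proposal is correct and takes essentially the same approach as the paper: the paper first uses almost-monotonicity (Proposition \ref{Monotonicity}) to bound $\sup_{z\in B_s(y)}\Phi(u,(z,t_0),2s)\leq E+\eta/2$ and then declares the conclusion a "direct consequence" of Proposition \ref{Dichotomy}. You have simply unpacked that terse last step — ruling out alternative (1) via the bad-ball hypothesis together with the inclusion $\Sg_{\va;\eta Rr}^k(u,t_0)\subset\Sg_{\va;\eta s}^k(u,t_0)$, then translating between the shifted energy levels — which is exactly what is implicitly required.
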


\begin{proof}
Consider a bad ball $ B_s(y) $. By Proposition \ref{Monotonicity}, if $ \eta\in(0,\f{1}{10}) $ is sufficiently small and $ r\in(0,\eta^2) $, then we have
\begin{align*}
\sup_{z\in B_s(y)}\Phi(u,(z,t_0),2s)&\leq\sup_{z\in B_{2r}(x_0)}\Phi(u,(z,t_0),2r)+C(\Lda,n)r\leq E+\f{\eta}{2}.
\end{align*}
The result is a direct consequence of Proposition \ref{Dichotomy}.
\end{proof}

\subsection{Building trees at good balls}

Let $ A\in\Z_{\geq 0} $, and assume that $ B_{r_A}(a) $ is a good ball. We aim to construct a good tree $ \cT_G:=\cT_G(B_{r_A}(a)) $, defined as a collection of balls $ \cT_G(B_{r_A}(a)):=\{B_{r_y}(y)\}_{y\in\cC_G(B_{r_A}(a))} $ that covers $ \Sg_{\va;\eta Rr}^k(u,t_0)\cap B_r(x_0)\cap B_{r_A}(a) $. The centers satisfy 
\[
\cC_G(B_{r_A}(a))\subset\Sg_{\va;\eta Rr}^k(u,t_0)\cap B_{r_A}(a).
\]
To build $ \cT_G(B_{r_A}(a)) $, we inductively define, for each scale $ r_i $ with $ i\in\Z_{\geq A} $, a collection of balls $ \{B_{r_i}(y)\}_{y\in\cB_i\cup\cG_i\cup\cS_i} $, where $ \cB_i $, $ \cG_i $, and $ \cS_i $ are disjoint sets of ball centers classified by specific criteria.

At the initial scale $ r_A $, set $ \cG_A=\{a\} $ and $ \cB_A=\cS_A=\emptyset $. Assuming that the construction is defined for scale $ r_{i-1} $, we proceed to scale $ r_i $. Let $ J_i $ be a maximal $ \f{2r_i}{5} $-net in
\[
\Sg_{\va;\eta Rr}^k(u,t_0)\cap B_r(x_0)\cap B_{r_A}(a)\cap B_{r_{i-1}}(\cG_{i-1})\backslash\bigcup_{j=A}^{i-1}B_{r_j}(\cB_j).
\]
\begin{itemize}
\item If $ r_i\leq Rr $, set $ \cS_i=J_i $, $ \cG_i=\cB_i=\emptyset $, and stop the process.
\item If $ r_i>Rr $, classify $ J_i $ as $ \cG_i\cup\cB_i$, where $ z\in\cG_i $ if $ B_{r_i}(z) $ is a good ball, and $ z\in\cB_i $ otherwise.
\end{itemize}
Since $ r_i=\rho^ir $ decreases geometrically, the process ends after finitely many steps. The construction decomposes good balls at each step when $ r_i>Rr $, retaining bad balls. In $ \cT_G $, centers not in some $ \cS_i $ correspond to bad balls. It motivates the following definition.

\begin{defn}\label{goodtreeclass}
For the good tree $ \cT_G=\cT_G(B_{r_A}(a)) $, we define
\[
\cF(\cT_G):=\bigcup_{i}\cB_i\quad\text{and}\quad\cS(\cT_G):=\bigcup_{i}\cS_i.
\]
Balls $ \{B_{r_y}(y)\}_{y\in\cF(\cT_G)} $ are called final balls and $ \{B_{r_y}(y)\}_{y\in\cS(\cT_G)} $ are stop balls. Note that $ \cS_i\neq\emptyset $ only if $ r_i\leq Rr $ and $ r_{i-1}>Rr $.
\end{defn}

\begin{lem}\label{lemgoodtree}
Let $ \cT_G=\cT_G(B_{r_A}(a)) $ be a good tree. The following properties hold.
\begin{enumerate}[label=$(\theenumi)$]
\item\label{good1} There is $ C>0 $, depending only on $ n $ such that 
\be
\sum_{y\in\cF(\cT_G)\cup\cS(\cT_G)}r_y^k\leq Cr_A^k.\label{estimategoodtree}
\ee
\item\label{good2} For any $ y\in\cS(\cT_G) $, $ \rho Rr<r_y\leq Rr $.
\item\label{good3} We have the covering
\[
\Sg_{\va;\eta Rr}^k(u,t_0)\cap B_r(x_0)\cap B_{r_A}(a)\subset\bigcup_{y\in\cF(\cT_G)\cup\cS(\cT_G)}B_{r_y}(y).
\]
\end{enumerate}
\end{lem}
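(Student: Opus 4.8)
The three parts are naturally proved in the order (2), (3), (1): the first two are read off directly from the construction of the good tree, while (1) is the substantive estimate and is where Lemma \ref{ReifenbergLemma} enters.

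\emph{Parts (2) and (3).} For (2), by construction $\cS_i$ is nonempty only at the single index $i^{*}$ at which the process halts, and the halting rule is precisely $r_{i^{*}}\leq Rr<r_{i^{*}-1}$; since $r_{i^{*}}=\rho\,r_{i^{*}-1}$ this gives $\rho Rr<r_{i^{*}}\leq Rr$, and every $y\in\cS(\cT_G)=\cS_{i^{*}}$ carries radius $r_y=r_{i^{*}}$. For (3) I would run a finite induction on the scale index $i\geq A$, proving the invariant that each $x\in\Sg_{\va;\eta Rr}^k(u,t_0)\cap B_r(x_0)\cap B_{r_A}(a)$ is, at step $i$, either already contained in $\bigcup_{j=A}^{i}B_{r_j}(\cB_j)$ (i.e.\ in a final ball) or contained in $B_{r_i}(\cG_i)$ (or in $B_{r_{i^{*}}}(\cS_{i^{*}})$ at the terminal step). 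The base case $i=A$ holds since $\cG_A=\{a\}$ and $x\in B_{r_A}(a)$; the inductive step is immediate because an $x$ not yet covered by a final ball and lying in $B_{r_{i-1}}(\cG_{i-1})$ belongs to the set of which $J_i$ is a maximal $\tfrac{2r_i}{5}$-net, so $x\in B_{2r_i/5}(y)\subset B_{r_i}(y)$ for some $y\in J_i=\cG_i\cup\cB_i$ (or $\cS_i$ at termination). Carrying the invariant to $i=i^{*}$ gives exactly the claimed covering by final and stop balls (reading the $\cT_B$ in the statement as $\cT_G$).

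\emph{Part (1).} The plan is to feed the family of leaves $\cD:=\cF(\cT_G)\cup\cS(\cT_G)=\big(\bigcup_i\cB_i\big)\cup\cS_{i^{*}}$, with radii shrunk by a dimensional factor, into Lemma \ref{ReifenbergLemma} with ambient ball $B_{r_A}(a)$. Three things need checking. First, \emph{disjointness}: within a single scale, centers of $\cB_i$ (and of $\cS_{i^{*}}$) are $\tfrac{2r_i}{5}$-separated since $J_i$ is a net, and across scales a center of $\cB_j$ or $\cS_{i^{*}}$ with $j>i$ is, by the very domain over which $J_j$ is taken, excluded from $B_{r_i}(\cB_i)$, hence at distance $\geq r_i$ from every point of $\cB_i$; since $r_j\leq\rho r_i$ for $j>i$, it follows that $\{B_{r_y/5}(y)\}_{y\in\cD}$ is disjoint, so $\{B_{2\wt r_y}(y)\}_{y\in\cD}$ is disjoint with $\wt r_y:=r_y/10$. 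Second, \emph{energy pinching}: if $y\in\cB_i$ (or $y\in\cS_{i^{*}}$) then $y\in\Sg_{\va;\eta Rr}^k(u,t_0)\cap B_{r_{i-1}}(z)$ for some good‑ball center $z\in\cG_{i-1}$, so Definition \ref{DeftypeItypeII} gives $\Phi(u,(y,t_0),\ga\rho r_{i-1})=\Phi(u,(y,t_0),\ga r_y)\geq E-\eta'$ with $E$ as in \eqref{Edefinition}; combined with the upper bound $\sup_{z\in B_{r_A}(a)}\Phi(u,(z,t_0),2r_A)\leq E+Cr$, which follows from Proposition \ref{Monotonicity} together with $B_{r_A}(a)\subset B_{2r}(x_0)$ and the boundedness of $E$ (via Lemmas \ref{ComparisonEandPhi} and \ref{UniformBound}), and choosing the parameters so that $\eta_{\mathrm R}\geq 10\ga$, $\eta'\leq\eta_{\mathrm R}/2$ (where $\eta_{\mathrm R}$ is the threshold produced by Lemma \ref{ReifenbergLemma}) and $r<\eta^{2}$ small enough to absorb the $O(r)$ monotonicity corrections, one obtains $\Phi(u,(y,t_0),\eta_{\mathrm R}\wt r_y)\geq (E+Cr)-\eta_{\mathrm R}$, which is precisely the pinching hypothesis of Lemma \ref{ReifenbergLemma} relative to $B_{r_A}(a)$. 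Third, \emph{correct stratum and scales}: since $\eta\leq\rho$, Remark \ref{inclusionSvak} gives $\Sg_{\va;\eta Rr}^k(u,t_0)\subset\Sg_{\va;\rho Rr}^k(u,t_0)$, and the leaf radii satisfy $\rho Rr\leq r_y\leq r_A$ (bad balls: $Rr<r_y\leq r_A$; stop balls: $\rho Rr<r_y\leq Rr\leq r_A$, the tree being nontrivial only when $r_A>Rr$), so $\cD$ lies in the correct stratum at admissible scales after normalization by $B_{r_A}(a)$. Lemma \ref{ReifenbergLemma} then yields $\sum_{y\in\cD}\wt r_y^{\,k}\leq C(n)r_A^{k}$, hence $\sum_{y\in\cD}r_y^{k}\leq 10^{k}C(n)\,r_A^{k}$, which is \eqref{estimategoodtree}.

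I expect the main obstacle to be the second verification in Part (1): one must track how the energy lower bound $\Phi(u,(y,t_0),\ga\rho r_{i-1})\geq E-\eta'$ inherited from the parent good ball survives the scale change to $\wt r_y=\rho r_{i-1}/10$ and the additive and multiplicative corrections of size $O(r)$ in the monotonicity formula, and then order the parameters $\eta_{\mathrm R},\ga,\eta',\rho,\eta,r$ correctly so that the precise threshold $\Phi(u,(y,t_0),\eta_{\mathrm R}\wt r_y)\geq (E+Cr)-\eta_{\mathrm R}$ demanded by Lemma \ref{ReifenbergLemma} is met; the geometric (covering and disjointness) parts are routine, as is Part (2).
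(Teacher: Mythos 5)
Your proposal is correct and follows the same overall route as the paper's proof: disjointness of the fifth-radius balls from maximality of the nets, the pinching $\Phi(u,(y,t_0),\ga r_y)\geq E-\eta'$ inherited from the parent good ball via Definition \ref{DeftypeItypeII}, the almost-monotonicity bound $\sup_{z\in B_{r_A}(a)}\Phi(u,(z,t_0),2r_A)\leq E+O(r)$ from Proposition \ref{Monotonicity}, and then Lemma \ref{ReifenbergLemma} applied in $B_{r_A}(a)$. Your proofs of (2) and (3) match the paper's (including the correct observation that $\cT_B$ in the statement is a typo for $\cT_G$), and your treatment of (1) is in fact slightly more careful than the paper's: the paper records only that $\{B_{r_i/5}(y)\}$ are disjoint and then invokes Lemma \ref{ReifenbergLemma} without mentioning that the hypothesis there is on the \emph{doubled} balls $\{B_{2r_y}(y)\}$, whereas you make the compensating radius rescaling $\wt r_y=r_y/10$ explicit and then correctly re-derive the pinching condition at the rescaled scale $\eta_{\mathrm R}\wt r_y$ by imposing $\eta_{\mathrm R}\geq 10\ga$, $\eta'\leq\eta_{\mathrm R}/2$, and $r\ll 1$. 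The paper compresses all of this into ``choosing suitable $\eta',\eta>0$,'' so your explicit parameter ordering fills in the bookkeeping the paper leaves implicit; nothing essential is missing or wrong.
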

\begin{proof}
The maximality of $ J_i $ at each step ensures that $ \{B_{\f{r_i}{5}}(y)\}_{y\in\cF(\cT_G)\cup\cS(\cT_G)} $ are pairwise disjoint. Since $ \cF(\cT_G)\cup\cS(\cT_G)\subset\Sg_{\va;\eta Rr}^k(u,t_0) $, for any $ y\in\cS_i $ or $ y\in\cB_i $, there exists $ y'\in\cG_{i-1} $ such that $ y\in B_{r_{i-1}}(y') $. By Definition \ref{DeftypeItypeII}, for $ y\in\cB_i $,
\be
\Phi(u,(y,t_0),\ga r_i)=\Phi(u,(y,t_0),\ga\rho r_{i-1})\geq E-\eta',\label{cBiass}
\ee
and for any $ y\in\cS_i $,
\be
\Phi(u,(y,t_0),\ga r_i)\geq E-\eta'.\label{cSiass}
\ee
Using Proposition \ref{Monotonicity} with sufficiently small $ \eta=\eta(\va,\eta',\ga,\Lda,n,\rho)>0 $ and $ r\in(0,\eta^2) $, we have
\[
\sup_{y\in B_{r_A}(a)}\Phi(u,(y,t_0),2r_A)\leq E+\eta'.
\]
Combining \eqref{cBiass} and \eqref{cSiass} and choosing suitable $ \eta',\eta >0 $, Lemma \ref{ReifenbergLemma} yields \eqref{estimategoodtree}. The second property follows from Definition \ref{goodtreeclass}, as $ \cS_i=\emptyset $ unless $ r_i\leq Rr $ and $ r_{i-1}>Rr $. For the last property, we show inductively that for $ i\in\Z_{\geq A} $,
\be
\Sg_{\va;\eta Rr}^k(u,t_0)\cap B_r(x_0)\cap B_{r_A}(a)\subset B_{r_i}(\cG_i)\cup\bigcup_{j=A}^iB_{r_j}(\cB_j\cup\cS_j).\label{inductiveSg}
\ee
It holds trivially at $ i=A $. Assuming that it holds for $ i-1 $, the construction of $ J_i $ ensures that
\[
\Sg_{\va;\eta Rr}^k(u,t_0)\cap B_r(x_0)\cap B_{r_{i-1}}(\cG_{i-1})\backslash\bigcup_{j=A}^{i-1}B_{r_j}(\cB_j)\subset B_{r_i}(\cB_i\cup\cG_i\cup\cS_i).
\]
proving the step for $ i $ and thus the covering property \eqref{inductiveSg}.
\end{proof}

\subsection{Building trees in bad balls}

Suppose $ B_{r_A}(a) $ is a bad ball at scale $ r_A $, where $ A\in\Z_{\geq 0} $. We aim to construct a bad tree $ \cT_B:=\cT_B(B_{r_A}(a)) $ for it. Similar to the good tree, we define $ \cT_B(B_{r_A}(a)):=\{B_{r_y}(y)\}_{y\in\cC(\cT_B(B_{r_A}(a)))} $, where the centers satisfy $ \cC(\cT_B(B_{r_A}(a)))\subset\Sg_{\va;\eta Rr}^k(u,t_0)\cap B_{r_A}(a) $, and the balls cover $ \Sg_{\va;\eta Rr}^k(u,t_0)\cap B_r(x_0)\cap B_{r_A}(a) $. Following the approach used for the constructions of good trees in the previous subsection, we inductively build a collection of balls for each scale $ r_i $ with $ i\in\Z_{\geq A} $ and stop at the desired step. 

At the initial scale $ i=A $, set $ \cB_A=\{a\} $ and $ \cG_A=\cS_A=\emptyset $. Now, assume $ \cB_j,\cG_j $, and $ \cS_j $ have been given for scales $ r_j $ with $ j\in\Z\cap[A,i-1] $. We construct the collection for scale $ r_i $ as follows.
\begin{itemize}
\item If $ r_i\leq Rr $, set $ \cG_i=\cB_i=\emptyset $ and let $ \cS_i $ be a maximal $ \f{2\eta r_{i-1}}{5} $-net in
\[
\Sg_{\va;\eta Rr}^k(u,t_0)\cap B_r(x_0)\cap B_{r_A}(a)\cap B_{r_{i-1}}(\cB_{i-1}).
\]
Per Remark \ref{Conventionll}, we choose $ \eta\ll\rho $, ensuring $ \eta r_{i-1}<r_i\leq Rr $.
\item If $ r_i>Rr $, for any $ y\in\cB_{i-1} $, Lemma \ref{badballproperty} provides $ L(y,r_{i-1})\in\bA(n,k-1) $. Define $ \cS_i $ as a maximal $ \f{2\eta r_{i-1}}{5} $-net in
\be
\Sg_{\va;\eta Rr}^k(u,t_0)\cap B_r(x_0)\cap B_{r_A}(a)\cap\bigcup_{y\in\cB_{i-1}}(B_{r_{i-1}}(y)\backslash B_{2\rho r_{i-1}}(L(y,r_{i-1}))),\label{Jibadchoose}
\ee
and define $ J_i $ as a maximal $ \f{2r_i}{5} $-net in
\[
\Sg_{\va;\eta Rr}^k(u,t_0)\cap B_r(x_0)\cap B_{r_A}(a)\cap\bigcup_{y\in\cB_{i-1}}(B_{r_{i-1}}(y)\cup B_{2\rho r_{i-1}}(L(y,r_{i-1}))).
\]
Split $ J_i $ into $\cG_i$ and $\cB_i$, where $ y\in\cG_i $ if $ B_{r_i}(y) $ is a good ball and $ y\in\cB_i $ otherwise.
\end{itemize} 

Again, since $ r_i=\rho^ir $, the inductive arguments above will end after finite steps and give the construction of a bad tree on a bad ball $ B_{r_A}(a) $. Similar to Definition \ref{goodtreeclass}, in $ \cT_B $, if a center $ y\in\cC(\cT_B) $ does not belong to any $ \cS_i $, then $ B_{r_y}(y) $ is a good ball, so it is natural to give the definition below.

\begin{defn}
For a bad tree $ \cT_B=\cT_B(B_{r_A}(a)) $, we let
\[
\cF(\cT_B):=\bigcup_{i}\cG_i\quad\text{and}\quad\cS(\cT_B):=\bigcup_{i}\cS_i.
\]
We call balls in $ \{B_{r_y}(y)\}_{y\in\cF(\cT_B)} $ final balls in $ \cT_B $, and call balls in $ \{B_{r_y}(y)\}_{y\in\cS(\cT_B)} $ stop balls in $ \cT_B $.
\end{defn}

\begin{lem}\label{badtree}
Let $ \cT_B=\cT_B(B_{r_A}(a)) $ be a bad tree. The following properties hold. 
\begin{enumerate}[label=$(\theenumi)$]
\item\label{bad1} There exist $ C>0 $ depending only on $ n $ and $ C'>0 $ depending only on $ \eta,n $ such that
\be
\sum_{y\in\cF(\cT_B)}r_y^k\leq C\rho r_A^k\quad\text{and}\quad\sum_{y\in\cS(\cT_B)}r_y^k\leq C'r_A^k.\label{badtreeestimate}
\ee
\item\label{bad2} We have the covering property
\[
\Sg_{\va;\eta Rr}^k(u,t_0)\cap B_r(x_0)\cap B_{r_A}(a)\subset\bigcup_{y\in\cF(\cT_B)\cup\cS(\cT_B)}B_{r_y}(y).
\]
\item\label{bad3} For any $ y\in\cS(\cT_B) $, either $ \eta Rr\leq r_y\leq Rr $ or
\[
\sup_{z\in B_{2r_y}(y)}\Phi(u,(z,t_0),2r_y)\leq E-\f{\eta}{3}.
\]
\end{enumerate}
\end{lem}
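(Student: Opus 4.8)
The plan is to treat $\cT_B$ as the ``bad'' counterpart of the good tree of Lemma \ref{lemgoodtree}: the new feature is that at every generation the portion of the singular set that is \emph{kept} for further subdivision is trapped, by the dichotomy of Lemma \ref{badballproperty}, inside a thin slab $B_{r_{i-1}}(y)\cap B_{2\rho r_{i-1}}(L(y,r_{i-1}))$ around a $(k-1)$-dimensional affine subspace. Since a radius-$r_{i-1}$ ball intersected with the $2\rho r_{i-1}$-neighbourhood of an element of $\bA(n,k-1)$ can be covered by at most $C(n)\rho^{-(k-1)}$ balls of radius $r_i=\rho r_{i-1}$, this forces the total $k$-content of the bad balls to contract by a factor $\sim\rho$ at each step, which is exactly what makes every relevant sum geometric. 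Throughout I use $\eta\ll\rho$ in the sense of Remark \ref{Conventionll}.

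For part \ref{bad1} I would first record, from the maximality of the nets $J_i,\cS_i$ and the covering-number bound above, that $\max\{\#\cB_i,\#\cG_i\}\le\#J_i\le C(n)\rho^{-(k-1)}\#\cB_{i-1}$ and, since $\cS_i$ is a $\tfrac{2\eta r_{i-1}}{5}$-net inside $\bigcup_{y\in\cB_{i-1}}B_{r_{i-1}}(y)$, that $\#\cS_i\le C(n)\eta^{-n}\#\cB_{i-1}$. As every $y\in\cB_{i-1}$ has $r_y=r_{i-1}$, every $y\in\cB_i\cup\cG_i$ has $r_y=r_i=\rho r_{i-1}$, and every $y\in\cS_i$ has $r_y=\eta r_{i-1}$, multiplying by the corresponding powers of the radii gives $\sum_{y\in\cB_i}r_y^k\le C_1(n)\rho\sum_{y\in\cB_{i-1}}r_y^k$, the same bound for $\sum_{y\in\cG_i}r_y^k$, and $\sum_{y\in\cS_i}r_y^k\le C_1(n)\eta^{k-n}\sum_{y\in\cB_{i-1}}r_y^k$. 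Choosing $\rho$ small, depending only on $n$, so that $C_1(n)\rho\le\tfrac12$, the first inequality and $\cB_A=\{a\}$ yield $\sum_{y\in\cB_i}r_y^k\le(C_1(n)\rho)^{i-A}r_A^k$; summing the remaining two inequalities over $i>A$ then produces geometric series and hence $\sum_{y\in\cF(\cT_B)}r_y^k\le C(n)\rho\,r_A^k$ and $\sum_{y\in\cS(\cT_B)}r_y^k\le C(n)\eta^{k-n}r_A^k$, the latter constant depending only on $\eta$ and $n$ because $k\le n$.

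For part \ref{bad2}, I would argue by induction on $i\ge A$ for the invariant $\Sg_{\va;\eta Rr}^k(u,t_0)\cap B_r(x_0)\cap B_{r_A}(a)\subset B_{r_i}(\cB_i)\cup\bigcup_{A\le j\le i}\big(B_{r_j}(\cG_j)\cup B_{\eta r_{j-1}}(\cS_j)\big)$. The base case $i=A$ is immediate from $\cB_A=\{a\}$, and in the inductive step one splits each $B_{r_{i-1}}(y)$, $y\in\cB_{i-1}$, into its intersection with $B_{2\rho r_{i-1}}(L(y,r_{i-1}))$ (covered by $B_{r_i}(\cG_i\cup\cB_i)$ by maximality of $J_i$) and the complement (covered by $B_{\eta r_{i-1}}(\cS_i)$ by maximality of $\cS_i$); since $r_i=\rho^i r$ the construction terminates at a scale where $\cB_i=\emptyset$, so the bad term disappears and \ref{bad2} is read off. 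For part \ref{bad3}, a stop ball $B_{r_y}(y)$ with $y\in\cS_i$ is of one of two types. If $i$ is the terminal index then $r_{i-1}>Rr\ge\rho r_{i-1}=r_i$ and $r_y=\eta r_{i-1}$, so $\eta Rr<r_y<\rho r_{i-1}\le Rr$ because $\eta<\rho$; this is the first alternative. Otherwise $r_i>Rr$ and $y\in B_{r_{i-1}}(y')\setminus B_{2\rho r_{i-1}}(L(y',r_{i-1}))\subset B_{r_{i-1}}(y')\setminus B_{\rho r_{i-1}}(L(y',r_{i-1}))$ for some bad ball $B_{r_{i-1}}(y')$, so the contrapositive of Lemma \ref{badballproperty} gives $\Phi(u,(y,t_0),2\eta r_{i-1})<E-\tfrac{\eta}{2}$; as $2r_y=2\eta r_{i-1}$ this is the desired bound at the centre, and it extends over $B_{2r_y}(y)$ using $\eta\ll\rho$ (which keeps $B_{2r_y}(y)$ out of $B_{\rho r_{i-1}}(L(y',r_{i-1}))$ and inside a mildly enlarged parent ball) together with the almost-monotonicity of Proposition \ref{Monotonicity} to pass the estimate to nearby centres up to an error $\le C(\Lda,n)r$, absorbed into the gap $\tfrac{\eta}{2}-\tfrac{\eta}{3}$.

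The main obstacle is the contraction estimate underlying part \ref{bad1}: one has to verify carefully that $J_i$ is genuinely confined, parent by parent, to the $2\rho r_{i-1}$-neighbourhood of a single $(k-1)$-plane — this is precisely where Lemma \ref{badballproperty} and the structure of the construction of $\cT_B$ enter — and that the covering number $C(n)\rho^{-(k-1)}$ of such a slab, once multiplied by $r_i^k=\rho^k r_{i-1}^k$, really gives a contraction factor $C_1(n)\rho<1$. Making this induction close is what forces $\rho$ to be chosen small depending only on $n$, and what guarantees that the final constants have the stated form, $C=C(n)$ for the final balls and $C'=C'(\eta,n)$ for the stop balls.
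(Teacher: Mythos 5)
Your proposal is correct and follows essentially the same route as the paper's proof. You correctly read the ``$\cup$'' in the definition of $J_i$ as a typo for ``$\cap$'', so that $J_i$ is confined to the $2\rho r_{i-1}$-slab around a single $(k-1)$-plane inside each bad parent ball; your covering-number count $C(n)\rho^{-(k-1)}$ is the same as the paper's $C_0(n)\rho^{1-k}$, your $\#\cS_i\leq C(n)\eta^{-n}\#\cB_{i-1}$ matches the paper's Vitali estimate, and the resulting geometric summations for final and stop balls are identical. Your inductive covering for part (2) is exactly the omitted argument the paper refers to in Lemma \ref{lemgoodtree}, and your two-case analysis of part (3) (terminal index versus non-terminal, using the contrapositive of Lemma \ref{badballproperty} and absorbing the small error $\leq r<\eta^2$ into the slack $\tfrac{\eta}{2}-\tfrac{\eta}{3}$) is the same as the paper's.
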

\begin{proof}
For $ r_i>Rr $, the centers in $ \cG_i\cup\cB_i $ lie within $ B_{2\rho r_{i-1}}(L(y,r_{i-1})) $, where $ L\in\bA(n,k-1) $ is from Lemma \ref{badballproperty}. The balls $ \{B_{\f{r_i}{5}}(y)\}_{y\in\cG_i\cup\cB_i} $ are pairwise disjoint due to the maximality of $ J_i $. For a bad ball $ B_{r_{i-1}}(y) $ with $ y\in\cB_{i-1} $, we estimate
\[
\#[(\cG_i\cup\cB_i)\cap B_{r_{i-1}}(y)]\leq\f{\w_{k-1}\w_{n-k+1}(3\rho)^{n-k+1}}{\w_n\(\f{\rho}{5}\)^n}\leq C_0(n)\rho^{1-k}.\label{GiBileq}
\]
Iterating this in the construction process, we get
\[
\#(\cG_i\cup\cB_i)r_i^k\leq C_0\rho(\#\cB_{i-1})r_{i-1}^k\leq C_0\rho[\#(\cB_{i-1}\cup\cG_{i-1})]r_{i-1}^k\leq...\leq (C_0(n)\rho)^{i-A}r_A^k.
\]
Choosing $ \rho=\rho(n)>0 $ small enough, we sum over $ i\in\Z_{\geq A+1} $ and obtain
\[
\sum_{i\in\Z_{\geq A+1}}\#(\cG_i\cup \cB_i)r_i^k\leq\sum_{i\in\Z_{\geq A+1}}(C_0\rho)^{i-A}r_A^k\leq C(n)\rho r_A^k.
\]
Since $ \cF(\cT_B)=\cup_i\cG_i $, it gives the first estimate of \eqref{badtreeestimate}. For $ \cS_i $ with $ i\in\Z_{\geq A+1} $, the balls $ \{B_{\eta r_{i-1}}(y)\}_{y\in\cS_i} $ form a Vitali collection in $ B_{r_{i-1}}(\cB_{i-1}) $, so $
\#\cS_i\leq C(n)\eta^{-n}(\#\cB_{i-1}) $. For $ i=A $, $ \cS_A=\emptyset $, so
\[
\sum_{i\in\Z_{\geq A+1}}(\#\cS_i)(\eta r_{i-1})^k\leq 10^n\eta^{k-n}\(\sum_{i\in\Z_{\geq A}}(\#\cB_i)r_i^k\)\leq C(n,\eta)r_A^k,
\]
yielding the second estimate of \eqref{badtreeestimate}. 

The covering property follows as in Lemma \ref{lemgoodtree}\ref{good3}, so we omit the details for brevity. For the third property, consider $y \in\cS_i $ with $ r_y=\eta r_{i-1} $. If $ r_i>Rr $, then 
\[
y\in B_{r_{i-1}}(y')\backslash B_{2\rho r_{i-1}}(L(y',r_{i-1})) 
\]
for some $ y'\in\cB_{i-1} $. By Lemma \ref{badballproperty}, with $ \eta>0 $ small ($ \eta<\f{1}{2}\rho $) and $ r\in(0,\eta^2) $,
\[
\sup_{z\in B_{2r_y}(y)}\Phi(u,(z,t_0),2r_y)\leq\sup_{z\in B_{\rho r_{i-1}}(y)}\Phi(u,(z,t_0),2\eta r_{i-1})+r\leq E-\f{\eta}{2}+\eta^2\leq E-\f{\eta}{3}.
\]
If $ r_i\leq Rr $, then $ r_{i-1}\geq Rr $, and since $ r_y=\eta r_{i-1} $ and $ \eta\ll\rho $,
\[
Rr\geq\rho r_{i-1}\geq\eta r_{i-1}=r_y\geq \eta Rr,
\]
completing the proof.
\end{proof}

\subsection{Proof of Lemma \ref{MainCovering}}

Under the assumptions of Lemma \ref{MainCovering}, we inductively construct, for any $ i\in\Z_{\geq 0} $, a collection of balls $ \{B_{r_y}(y)\}_{y\in\cF_i}\cup\{B_{r_y}(y)\}_{y\in\cS_i} $ that covers $ \Sg_{\va;\eta Rr}^k(u,t_0)\cap B_r(x_0) $. It builds on the good and bad tree constructions from previous subsections. At each step $ i $, the balls in $ \{B_{r_y}(y)\}_{y\in\cF_i} $ are either all good or all bad. We define a stopping condition, and the final collection will satisfy the requirements of the lemma.

For $ i=0 $, set $ \cF_0=\{x_0\} $ with $ r_{x_0}=r $ and $ \cS_0=\emptyset $. Thus, $ B_r(x_0) $ is the only initial ball, either good or bad. Assuming that the collection is defined up to step $ i-1 $, where all balls are centered in $ \cF_{i-1} $, are either all good or all bad, we proceed as follows.
\begin{itemize}
\item If all balls in $ \cF_{i-1} $ are good, define a good tree $ \cT_{G,y}=\cT_G(B_{r_y}(y)) $ for any $ y\in\cF_{i-1} $. Set
\[
\cF_i=\bigcup_{y\in\cF_{i-1}}\cF(\cT_{G,y})\quad\text{and}\quad\cS_i=\cS_{i-1}\cup\bigcup_{y\in \cF_{i-1}}\cS(\cT_{G,y}).
\]
Since the final balls of good trees are always bad balls, we see that all the balls with the center in $ \cF_i $ are bad. 
\item If all balls in $ \cF_{i-1} $ are bad, define a bad tree $ \cT_{B,y}=\cT_B(B_{r_y}(y)) $ for any $ y\in\cF_{i-1} $. Set
\[
\cF_i=\bigcup_{y\in\cF_{i-1}} \cF(\cT_{B,y})\quad\text{and}\quad\cS_i=\cS_{i-1}\cup\bigcup_{y\in\cF_{i-1}}\cS(\cT_{B,y}).
\]
Here, the final balls of the bad trees are good, so all the balls in $ \cF_i $ are good.
\end{itemize}

We have now completed the construction. The following lemma summarizes its properties.

\begin{lem}\label{lemmafinal}
There exists $ N\in\Z_+ $ such that $ \cF_N=\emptyset $, and the following properties hold.
\begin{enumerate}[label=$(\theenumi)$]
\item There exists $ C>0 $, depending only on $ n $ such that
\be
\sum_{i=0}^{N-1}\sum_{y\in\cF_i}r_y^k\leq Cr^k.\label{Finalballestimate}
\ee
\item We have the estimate
\be
\sum_{y\in\cS_N}r_y^k\leq C'r^k,\label{Stopballestimate}
\ee
where $ C'>0 $ depends only on $ \va,\Lda,n $, and $ \cN $.
\item The balls in $ \{B_{r_y}(y)\}_{y\in\cS_N} $ form a covering of $ \Sg_{\va;\eta Rr}^k(u,t_0)\cap B_r(x_0) $, that is,
\[
\Sg_{\va;\eta Rr}^k(u,t_0)\cap B_r(x_0)\subset\bigcup_{y\in\cS_N}B_{r_y}(y).
\]
\item For any $ y\in\cS_N $, either $ \eta Rr\leq r_y\leq Rr $ or
\[
\sup_{z\in B_{2r_y}(y)}\Phi(u,(z,t_0),2r_y)\leq E-\f{\eta}{3}.
\]
\end{enumerate}
\end{lem}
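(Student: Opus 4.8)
The plan is to read off the four assertions directly from the alternating good/bad-tree construction, using only the estimates already recorded in Lemmas \ref{lemgoodtree} and \ref{badtree}, organised around an outer induction on $i$. At the outset I fix $\rho=\rho(n)\in(0,\f{1}{10})$ small enough that $C_0(n)^2\rho\le\f{1}{2}$, where $C_0(n)$ is an upper bound for all the $n$-dependent constants appearing in the tree estimates (this is compatible with the smallness of $\rho$ already imposed in the construction of bad trees), and then $\eta=\eta(\va,\Lda,n,\cN)\ll\rho$ as demanded by every lemma invoked below, with $r\in(0,\eta^2)$.

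For the termination statement, the key observation is that a good or bad tree built on a ball of radius $\rho^a r$ produces final balls only at scales $\rho^{a'}r$ with $a'>a$ and $\rho^{a'}r>Rr$. Hence, by induction, every $y\in\cF_i$ has $r_y=\rho^j r$ with $j\ge i$ and $\rho^j r>Rr$, so $j<\log R/\log\rho$; taking $N:=\lceil\log R/\log\rho\rceil$ forces $\cF_N=\emptyset$. For \eqref{Finalballestimate}, set $a_i:=\sum_{y\in\cF_i}r_y^k$. When $\cF_{i-1}$ consists of good balls, $\cF_i=\bigcup_{y\in\cF_{i-1}}\cF(\cT_{G,y})$ and Lemma \ref{lemgoodtree}\ref{good1} gives $a_i\le C_0(n)a_{i-1}$; when $\cF_{i-1}$ consists of bad balls, Lemma \ref{badtree}\ref{bad1} gives $a_i\le C_0(n)\rho\,a_{i-1}$. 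Since the two types alternate along the construction, $a_{i+2}\le C_0(n)^2\rho\,a_i\le\f{1}{2}a_i$, so $\sum_i a_i$ is a convergent geometric series bounded by $C(n)r^k$, with the bound independent of $N$. The stop-ball estimate \eqref{Stopballestimate} then follows from $\cS_N=\bigcup_{i\ge1}\bigcup_{y\in\cF_{i-1}}\cS(\cT_{\cdot,y})$: applying the $\cS$-parts of Lemma \ref{lemgoodtree}\ref{good1} (constant $C(n)$) and of Lemma \ref{badtree}\ref{bad1} (constant $C(\eta,n)$) to the relevant trees bounds $\sum_{y\in\cS_N}r_y^k$ by $C(\eta,n)\sum_i a_i\le C(\va,\Lda,n,\cN)r^k$.

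The covering property is proved by induction on $i$ via the invariant $\Sg_{\va;\eta Rr}^k(u,t_0)\cap B_r(x_0)\subset\bigcup_{y\in\cF_i}B_{r_y}(y)\cup\bigcup_{y\in\cS_i}B_{r_y}(y)$, which holds trivially at $i=0$ with $\cF_0=\{x_0\}$, $r_{x_0}=r$: at each step one replaces every $B_{r_y}(y)$ with $y\in\cF_{i-1}$ by the covering of $\Sg_{\va;\eta Rr}^k(u,t_0)\cap B_r(x_0)\cap B_{r_y}(y)$ supplied by Lemma \ref{lemgoodtree}\ref{good3} or Lemma \ref{badtree}\ref{bad2}, whose final balls constitute $\cF_i$ and whose stop balls are absorbed into $\cS_i$; evaluating at $i=N$ gives the third claim. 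Finally, each $y\in\cS_N$ is a stop ball of some good or bad tree: in the good-tree case Lemma \ref{lemgoodtree}\ref{good2} yields $\rho Rr<r_y\le Rr$, hence $\eta Rr\le r_y\le Rr$ since $\eta\ll\rho$, and in the bad-tree case Lemma \ref{badtree}\ref{bad3} yields precisely the stated dichotomy for the fourth claim.

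I expect the only genuinely delicate point to be the bookkeeping around constants and scales: one must make sure the tree constant $C_0(n)$ can be taken uniform over all trees and all steps, so that no dependence on $N$ creeps into \eqref{Finalballestimate}, and that the single smallness choice $\rho=\rho(n)$ simultaneously makes the two-step contraction factor $C_0(n)^2\rho$ summable and remains compatible with the $\rho$-smallness already used when building bad trees. Everything else is a mechanical unwinding of Lemmas \ref{lemgoodtree} and \ref{badtree}.
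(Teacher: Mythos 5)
Your proposal is correct and follows essentially the same route as the paper: termination via the geometric decay of final-ball radii against the lower cutoff $Rr$, the alternating good/bad contraction $a_{i+2}\le C_0(n)^2\rho\,a_i$ summed as a geometric series for \eqref{Finalballestimate}, the stop-ball bound \eqref{Stopballestimate} obtained by summing the $\cS$-estimates of Lemmas \ref{lemgoodtree} and \ref{badtree} over all trees and reducing to \eqref{Finalballestimate}, and induction on the covering invariant for the last two claims. The "delicate point" you flag about choosing $\rho=\rho(n)$ once to make the two-step factor summable is exactly how the paper handles it.
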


The lemma above directly implies Lemma \ref{MainCovering}.

\begin{proof}[Proof of Lemma \ref{MainCovering} given Lemma \ref{lemmafinal}]
Take $ \cC=\cS_N $. For any $ y\in\cC $, if $ r_y>Rr $, keep $ r_y $ unchanged; if $ r_y\leq Rr $, set $ r_y=Rr $. The desired properties follow from Lemma \ref{lemmafinal}.
\end{proof}

\begin{proof}[Proof of Lemma \ref{lemmafinal}]
First, we show that $ \cF_N=\emptyset $ for some $ N\in\Z_+ $. In both good and bad tree constructions for a ball $ B_{r_A}(a) $, the radius of a final ball satisfies $ r_y\leq\rho r_A $. Thus,
\[
\max_{y\in\cF_i}r_y\leq\rho\max_{y\in\cF_{i-1}}r_y\leq C\rho^ir.
\]
For large $ i\in\Z_+ $, $ \rho^ir<Rr $, contradicting the construction requirement that final balls have radii $ \geq Rr $. Hence, the process terminates when $ \cF_N=\emptyset $.

For the first property, consider the following two cases.
\begin{itemize}
\item If balls in $ \cF_i $ are good, they are final balls of bad trees from $ \cF_{i-1} $. By Lemma \ref{badtree}\ref{bad1}, we have
\be
\sum_{y\in\cF_i}r_y^k\leq C(n)\rho\(\sum_{y\in \cF_{i-1}}r_y^k\).\label{Use1}
\ee
\item If balls in $\cF_i$ are bad, they are final balls of good trees from $ \cF_{i-1} $. Using Lemma \ref{lemgoodtree}\ref{good1}, we obtain that
\be
\sum_{y\in \cF_i} r_y^k\leq C(n)\(\sum_{y\in \cF_{i-1}}r_y^k\).\label{Use2}
\ee
\end{itemize}
Since good trees produce bad final balls and bad trees produce good ones, the estimates \eqref{Use1} and \eqref{Use2} alternate. Combining these, we get
\[
\sum_{y\in \cF_i}r_y^k\leq C(n)(C_0(n)\rho)^{\f{i}{2}}\leq C(n)2^{-\f{i}{2}}r^k,
\]
assuming $ \rho>0 $ is chosen small enough. Summing from $ i=0 $ to $ N-1 $ yields \eqref{Finalballestimate}.

To show the estimate \eqref{Stopballestimate}, we note that for any $ y\in\cS_N $, the ball $ B_{r_y}(y) $ is produced from a good or bad tree for a ball with center $ y'\in \cF_i $, for some $ i<N $. Using Lemma \ref{lemgoodtree}\ref{good1} and Lemma \ref{badtree}\ref{bad1}, together with the first point of this lemma, we have
\[
\sum_{y\in\cS_N}r_y^k\leq C(\va,n,\Lda,\cN)\(\sum_{i=0}^N\sum_{y\in \cF_i} r_y^k\)\leq C(\va,n,\Lda,\cN)r^k.
\]

Regarding the third property of the current lemma, we apply Lemma \ref{lemgoodtree}\ref{good3} and Lemma \ref{badtree}\ref{bad2} to each tree constructed at balls centered in $ \cF_{i-1} $. It gives that
\[
\bigcup_{y\in\cF_{i-1}}(\Sg_{\va;\eta Rr}^k(u,t_0)\cap B_{r_y}(y))\subset \bigcup_{y\in\cF_i}(\Sg_{\va;\eta Rr}^k(u,t_0)\cap B_{r_y}(y))\cup\bigcup_{y \in \cS_i} B_{r_y}(y)
\]
Since $ \Sg_{\va;\eta Rr}^k(u,t_0)\cap B_r(x_0)\subset B_r(x_0) $, induction shows that for any $ i\in\Z\cap[1,N] $,
\[
\Sg_{\va;\eta Rr}^k(u,t_0)\cap B_r(x_0)\subset\bigcup_{y\in\cF_i}B_{r_y}(y) \cup\bigcup_{y\in \cS_i}B_{r_y}(y).
\]
At $ i=N $, since $ \cF_N=\emptyset $, the covering holds. The fourth property follows directly from Lemma \ref{lemgoodtree}\ref{good2} and Lemma \ref{badtree}\ref{bad3}.
\end{proof}

\section{Proof of main theorems}\label{ProofofMainTheorems}
In this section, we give the proof of the main theorems. First, we provide a volume estimate for quantitative spatial stratification.
\begin{thm}[Main theorem of quantitative spatial stratification]\label{MainStratification}
Let $ \va\in(0,1) $, $ \Lda>0 $, $ k\in\Z\cap[0,n-1] $, $ R\in(0,1] $, and $ t\in(-4,4) $. Assume that $ u\in H_{\Lda}(P_4,\cN) $. There exists $ C>0 $ that depends only on $ \va,\Lda,n,\cN $, and $ R $ such that the following properties hold. 

For any $ r\in(0,R) $,
\begin{align}
\cL^n(B_r(\Sg_{\va;r,R}^k(u,t)\cap B_1))&\leq Cr^{n-k},\label{MinkowskiContentSigma}\\
\cL^n(B_r(\Sg_{\va;0,R}^k(u,t)\cap B_1))&\leq Cr^{n-k}.\label{MinkowskiContentSigma2}
\end{align}
Moreover, for any $ x\in B_1 $ and $ \rho\in(0,1) $,
\be
\HH^k(\Sg_{\va;0,R}^k(u,t)\cap B_{\rho}(x))\leq C\rho^k.\label{AlforsEstimates}
\ee
\end{thm}
Before giving the proof of Theorem \ref{MainStratification}, we first show that we can use Lemma \ref{InculsionDifferentScales} to reduce the proof to a special case.
\begin{lem}\label{reduceR1}
Theorem \ref{MainStratification} holds for any $ R\in(0,1] $ if it holds for $ R=1 $.
\end{lem}
\begin{proof}
Assuming that Theorem \ref{MainStratification} is satisfied for $ R=1 $, we now show that the theorem holds for any $ R\in(0,1) $. For \eqref{MinkowskiContentSigma}, without loss of generality, we assume that $ r<\f{R}{2} $ since otherwise the inequality is ensured trivially with the help of
\[
\cL^n(B_r(\Sg_{\va;r,R}^k(u,t)\cap B_1))\leq \cL^n(B_1)\leq C(n,R)r^{n-k}.
\]

If $ R\in(\f{1}{2},1) $, then \eqref{deltarRkut} implies the existence of $ \delta=\delta(\va,\Lda,n,\cN)>0 $ such that
\be
\Sg_{\va;r,R}^k(u,t)\subset\Sg_{\va;r,\f{1}{2}}^k(u,t)\subset\Sg_{\delta;r,1}^k(u,t),\label{ChainArguments1}
\ee
where the second inclusion follows from Remark \ref{inclusionSvak}. Applying Theorem \ref{MainStratification} for $ R=1 $ with $ \va $ replaced by $ \delta $, the result follows. 

If $ R\in(0,\f{1}{2}] $, there exists $ \ell=\ell(R)\in\Z_+ $ such that $ \f{1}{2}\leq 2^{\ell}R<1 $. By using Lemma \ref{InculsionDifferentScales}  $ \ell $ times, we have
\be
\Sg_{\va;r,R}^k(u,t)\subset\Sg_{\delta_1;r,2R}^k(u,t)\subset\Sg_{\delta_2;r,2^2R}^k(u,t)\subset...\subset\Sg_{\delta_{\ell};r,2^{\ell}R}^k(u,t).\label{ChainArguments2}
\ee
Note that here $ \delta_{\ell}=\delta_{\ell}(\va,\Lda,n,\cN,R)>0 $. It reduces to the case where $ R>\f{1}{2} $. The property \eqref{MinkowskiContentSigma2} follows directly from \eqref{MinkowskiContentSigma}. When it comes to \eqref{AlforsEstimates}, the general case with $ R\in(0,1) $ follows directly from the chain properties given in \eqref{ChainArguments1} and \eqref{ChainArguments2} with similar arguments.
\end{proof}
Next, we use Proposition \ref{MainCoveringCor} to give the proof of Theorem \ref{MainStratification}.
\begin{proof}[Proof of Theorem \ref{MainStratification}:] By Lemma \ref{reduceR1}, it suffices to prove the case $R = 1$. Fix $ s\in (0,\f{1}{10}) $. Choose $ \eta=\eta(\va,\Lda,n,\cN)>0 $ as given by  Proposition \ref{MainCoveringCor}. We cover $ \Sg_{\va;\frac{\eta^3s}{2},1}^k(u,t) $ with balls $ \{B_{\f{\eta^2 s}{2}}(x_i)\}_{i=1}^{N_0} $, where $ \{x_i\}_{i=1}^{N_0}\subset B_1 $ and $ 1\leq N_0\leq C(\va,\Lda,n,\cN)s^{-k} $. Combining Proposition \ref{MainCoveringCor}  with the bound on $ N_0 $, we get
\be
\cL^n\left[B_{\f{\eta^2s}{2}}\(\Sg_{\va;\f{\eta^3s}{2},1}^k(u,t)\cap B_1\)\right]\leq C(\va,\Lda,n,\cN)\(\f{\eta^2s}{2}\)^{n-k}.\label{Brprimes}
\ee
For $ 0<r<\f{\eta^3}{20} $, choose $ s\in(0,\f{1}{10}) $ such that $ r=\f{\eta^3s}{2} $. It implies that
\be
\cL^n(B_r(\Sg_{\va;r,1}^k(u,t)\cap B_1))\leq C(\va,\Lda,n,\cN)r^{n-k}.\label{Lnleq}
\ee
If $ \f{\eta^3}{20}\leq r<1 $, then
\[
\cL^n(B_{r}(\Sg_{\va;r,1}^k(u,t)\cap B_1))\leq\cL^n(B_1)\leq C(\va,\Lda,n,\cN)r^{n-k}.
\]
This, together with \eqref{Lnleq}, implies \eqref{MinkowskiContentSigma}, which in turn suggests \eqref{MinkowskiContentSigma2}.

Now, let $ \rho\in(0,\eta^2) $ and $ x\in B_1 $. For $ s\in(0,\f{\rho}{10}) $,  Proposition \ref{MainCoveringCor} implies that the $ k $-dimensional Hausdorff measure at the scale $ s $ satisfies
\be
\HH_s^k(\Sg_{\va;0,1}^k(u,t)\cap B_{\rho}(x))\leq C(n)(\#\cC)s^k\leq C(\va,\Lda,n,\cN)\rho^k.\label{Ahlfors1}
\ee
For $ \rho\geq\eta^2 $, the result is trivial by covering the ball $ B_{\rho}(x) $. Taking $ s\to 0^+ $, we obtain \eqref{AlforsEstimates}.
\end{proof}
\subsection{Proof of Theorem \ref{MainTheorem}}

Without loss of generality, let $ r=2 $ and $ X_0=0^{n,1} $. \eqref{Topchain} follows from Proposition \ref{SgnSgn2}. Regarding \eqref{MinkowskiContentSgn2}, it is a direct consequence of Theorem \ref{MainStratification} and Lemma \ref{FinallLemmaproperty}. Fix $ t\in(-r^2,+r^2) $ and $ k\in\Z\cap[0,n] $. We show that $ \Sg^k(u,t) $ is $ k $-rectifiable. By Remark \ref{inclusionSvak}, Remark \ref{decomSkuseSva} and covering arguments, it suffices to prove the rectifiability of $ \Sg_{\va;0,1}^k(u,t)\cap B_{\f{1}{2}} $ for any $ \va>0 $. Consider $ S\subset \Sg_{\va;0,1}^k(u,t)\cap B_{\f{1}{2}} $ with $ \HH^k(S)>0 $. For any $ x\in \Sg_{\va;0,1}^k(u,t) $ and $ 0<r\leq 1 $, define
\[
g(u,(x,t),r):=\Phi(u,(x,t),r)-\lim_{\rho\to 0^+}\Phi(u,(x,t),\rho).
\]
By the dominated convergence theorem, for any $ \sg>0 $, there is $ r_0=r_0(\sg,u)>0 $ such that 
\[
\f{1}{\HH^k(S)}\int_Sg(u,(x,t),r_0)\ud\HH^k(x)\leq\sg^2.
\]
Using an average argument, there is an $ \HH^k $-measurable set $ E\subset S $ such that $ \HH^k(E)\leq\sg\HH^k(S) $ and $ g(u,(x,t),r_0)\leq\sg $ for any $ x\in F:=S\backslash E $. Cover $ F $ with a finite collection of balls $ \{B_{r_0}(y_i)\}_{i=1}^{N_1} $ such that $ \{y_i\}_{i=1}^{N_1}\subset F $. 

We claim: if $ \sg=\sg(\va,\Lda,n,\cN)>0 $ is sufficiently small, then $ F\cap B_{r_0}(y_i) $ is $ k $-rectifiable for any $ i\in\Z\cap[1,N_1] $. 

If such a claim holds, then applying the process countably many times to $ S $ shows that $ S $ is $ k $-rectifiable. Since $ S $ was arbitrary, it follows that $ \Sg_{\va;0,1}^k(u,t)\cap B_{\f{1}{2}} $ is $ k $-rectifiable. 

To prove the claim, consider $ B_{r_0}(y_1) $ and assume that $ 0<r_0<\min\{\f{1}{100},\lda\} $, where $ \lda=\lda(\va,\Lda,n,\cN)>0 $ will be chosen later. For any $ z\in F $, $ g(u,(z,t),r_0)\leq\sg $. For any $ \sg'>0 $, by selecting sufficiently small $ (\sg,\lda)=(\sg,\lda)(\sg',\Lda,n,\cN)>0 $, Proposition \ref{Monotonicity} and Lemma \ref{pinchLem} imply that $ u $ is spatially $ (0,\sg') $-symmetric in $ B_{2s}(z) $ at $ t $ for any $ 0<s\leq\f{r_0}{2} $. For $ z\in F\subset \Sg_{\va;0,1}^k(u,t) $, $ u $ is not spatially $ (k+1,\va) $-symmetric in $ B_{s}(z) $ at $ t $. With $ \sg'=\sg'(\va,\Lda,n,\cN)>0 $ small enough, Proposition \ref{L2BestProp} yields that
\[
D_{\mu}^k(z,s)\leq \f{C(\va,\Lda,n,\cN)}{s^k}\int_{B_s(z)}\left[\cW\(u,(\zeta,t),s,\f{s}{2}\)+s\right]\ud\mu(\zeta)
\]
for any $ z\in F $ and $ 0<s\leq\f{r_0}{2} $, where $ \mu:=\HH^k\llcorner F $. Integrating both sides over $ B_r(x) $ with respect to $ z $ for $ x\in B_{r_0}(y_1) $ and $ 0<r\leq r_0 $, we obtain from Fubini theorem and \eqref{Ahlfors1} that
\[
\int_{B_r(x)}D_{\mu}^k(z,s)\ud\mu(z)\leq C(\va,\Lda,n,\cN)\int_{B_{r+s}(x)}\left[\cW\(u,(z,t),s,\f{s}{2}\)+s\right]\ud\mu(z).
\]
As a result, for any $ x\in B_{r_0}(y_1) $ and $ 0<r\leq r_0 $,
\begin{align*}
&\int_{B_r(x)}\(\int_0^rD_{\mu}^k(\cdot,s)\f{\ud s}{s}\)\ud\mu\leq C\left[\int_{B_{2r}(x)}\(\int_r^{2r}\Phi(u,(\cdot,t),s)\f{\ud s}{s}+1\)\ud\mu\right]\leq C(\va,\Lda,n,\cN)r^k,
\end{align*}
where the last step is based on \eqref{Ahlfors1}. Theorem \ref{Rei2} then ensures that $ F\cap B_{r_0}(y_1) $ is $ k $-rectifiable, proving the claim.

\subsection{Proof of Theorem \ref{ImprovementRegularity}} 

For $ t\in(-1,1) $, Lemma \ref{FinaLemma2} implies that
\[
\{x\in B_1:r_u((x,t))<\va r\}\subset\Sg_{\va;0,\va}^{n-3}(u,t)\cap B_1,
\]
where $ \va=\va(\Lda,n,\cN)>0 $. For $ r\in(0,\f{1}{10}) $, we get from Theorem \ref{MainStratification} that
\[
\cL^n(\{x\in B_1:|\na u(x,t)|>r^{-1}\})\leq\cL^n(\{x\in B_1:r_u((x,t))<r\})\leq C(\Lda,n,\cN)r^3.
\]
If $ r>\f{1}{10} $, then
\[
\cL^n(\{x\in B_1:|\na u(x,t)|>r^{-1}\})\leq\cL^n(B_1)\leq C(n)r^3.
\]
Consequently, 
\[ \|\na u(\cdot,t)\|_{L^{3,\ift}(B_1)}\leq C(\Lda,n,\cN),\]
completing the proof.

\section*{Acknowledgments}

The authors thank the referees for their careful reading of the manuscript and for their valuable comments and suggestions. The first two authors express their gratitude to Professor K. Wang for his generous and heartfelt support during their visit to Wuhan University. There, alongside the third author, they began exploring the problem addressed in this paper. The authors also thank Professor R. Haslhofer at the University of Toronto and Professor W. Jiang for their enlightening comments on the contexts of this paper. K. Wu is supported by the National Natural Science Foundation of China (No. 12401264). Z. Zhang is partially supported by the National Key R\&D Program of China under Grant 2023YFA1008801 and NSF of China under Grant 12288101.

\section*{Declarations} 

\subsection*{Data availability} This article has no associated data.
\subsection*{Conflict of interest} The authors declared that they have no conflict of interest.

\bibliographystyle{plain}

\end{document}